\documentclass[12pt]{amsart}

\usepackage[margin=1.1in]{geometry}
\usepackage[T1]{fontenc}
\usepackage[utf8]{inputenc}
\usepackage{microtype}
\usepackage{amsmath,amssymb,amsthm,mathtools}
\usepackage[dvipsnames]{xcolor}
\usepackage[colorlinks=true,linkcolor=blue,citecolor=blue,urlcolor=blue]{hyperref}
\usepackage{tikz-cd} 
\usepackage{lmodern}

\usepackage{tikz}
\usepackage{wrapfig}
\usepackage{pgfplots}
\usetikzlibrary{positioning,calc,fit,backgrounds,arrows.meta}

\newcommand{\blue}[1]{{\color{blue}#1}}

\newcommand{\red}[1]{{\color{red}#1}}

\newcommand{\Bin}{\mathrm{Bin}}

\newcommand{\dist}{\mathrm{dist}}
\newcommand{\Span}{\operatorname{span}}

\newtheorem{theorem}{Theorem}
\newtheorem{proposition}{Proposition}
\newtheorem{corollary}[theorem]{Corollary}
\newtheorem{lemma}{Lemma}
\newtheorem{remark}{Remark}
\newtheorem{example}{Example}
\newtheorem{definition}{Definition}

\usepackage{microtype}
\usepackage[style=numeric-comp,hyperref=true,doi=false,url=false,sorting=none, isbn=false,giveninits=false, block=none,backend=bibtex,maxnames=99]{biblatex}
\renewbibmacro{in:}{}

\bibliography{graphonstab.bib}

\usepackage[font=small]{caption}

\newcommand{\aff}{\operatorname{aff}}
\newcommand{\R}{\mathbb{R}}

\newcommand{\x}{\mathbf{x}}

\newcommand{\Z}{\mathbb{Z}}

\newcommand{\E}{\mathbb{E}}

\newcommand{\conv}{\operatorname{conv}}

\newcommand{\bfo}{{\bf 1}}

\newcommand{\rint}{\operatorname{relint}}
\newcommand{\supp}{\operatorname{supp}}
\newcommand{\N}{\mathbb{N}}

\usepackage{tikz}
\usetikzlibrary{shapes,arrows,graphs,graphs.standard,shapes.misc}
\usetikzlibrary{matrix,decorations.pathreplacing, calc, positioning,fit}
\usetikzlibrary{shapes.geometric}
\usetikzlibrary{decorations.markings}

\newcommand{\xc}[1]{\vspace{.1cm}

\noindent {\em #1} }

\title[Structural stability for graphon systems]{Structural stability of systems and cycle covers in random graphs}
\author{Mohamed Ali Belabbas}
\address{Coordinated Science Laboratory,  Department of Electrical and Computer   Engineering and Department of Mathematics, University of Illinois at Urbana-Champaign}
\email{belabbas@illinois.edu}
\date{}
\keywords{Structural stability, Linear systems, Random graphs, Directed step-graphons, Cycle covers, Cycle polytopes}
\subjclass[2020]{Primary: 93D20, 05C80; Secondary: 05C20, 05C70, 52B12 }
\begin{document}

\maketitle

\begin{abstract}
Structural system theory studies which network topologies can sustain a prescribed system property such as controllability or stability. When the topology is itself random, the relevant question becomes probabilistic: how likely is a graph drawn from a stochastic model to sustain the property? Such probabilities measure the abundance and robustness of the property across topologies, and indicate whether systems requiring it can be reliably deployed in uncertain environments. We address this question for asymptotic stability of linear systems in the directed graphon setting.

We consider two graph-theoretic properties. The first is $\mathcal N$, and it requires that for every $k\leq n$ some $k$-vertex induced subdigraph of $D$ admits a cycle cover, and the second is $\mathcal S$, which requires that these subdigraphs can be chosen so that their node sets form a nested sequence $V_1\subset\cdots\subset V_n=V(D)$ starting from a single  vertex with a loop. We have shown that $\mathcal N$ is necessary and $\mathcal S$ is sufficient for structural stability. We sample $D$ from a directed step-graphon $W$. Our main results give necessary and sufficient conditions for $\Pr(\mathcal N)\to 1$ and $\Pr(\mathcal S)\to 1$ as $n\to\infty$.  In more detail, to a step-graphon $W$ with skeleton digraph $S$ on $q$ nodes and concentration vector $x^*$ we associate a cycle polytope $\vec{\mathcal X}(S)\subseteq\Delta_q$. The conditions are then formulated in terms of the position of $x^*$ within $\vec{\mathcal X}(S)$, the dimension of the polytope, the loop density of $W$ and, for $\mathcal S$, an ordering condition on the cycles of the skeleton. Together these results identify, for directed step-graphons, the regime in which a sampled topology is overwhelmingly likely or unlikely to sustain stable dynamics.
\end{abstract}

\section{Introduction}\label{sec:introduction}

\subsection{Background: structural system theory}\label{ssec:intro_struct}

Structural system theory asks when a given network topology can sustain a prescribed system property. Such properties include controllability and stability. In more detail, consider a network of $n$ agents with states $x_1,\dots,x_n$ whose communication topology is described by a directed graph $D=(V,\vec E)$, with vertices $v_1,\dots,v_n$ representing the agents and a directed edge $v_i \to v_j$ indicating that agent $j$ can access state information from agent $i$. Given $D$, a dynamics $\dot x(t)=f(x(t))$ is said to be {\em compatible} with $D$ if the dynamics of $x_i$ depend only on its incoming neighbors:
$$
\frac{\partial f_i}{\partial x_j}\not\equiv 0 \iff v_j\to v_i\in \vec E.
$$
We denote by $\Sigma_D$ the set of differentiable dynamics compatible with $D$ (replacing differentiability by $\mathrm C^k$, smoothness, analyticity, or linearity as the context dictates). Given a desired system property $\mathcal P$ (for instance, asymptotic stability at a given equilibrium) we say that $D$ {\em sustains} $\mathcal P$ if there exists $f\in\Sigma_D$ satisfying~$\mathcal P$. This line of research was initiated by C.-T.~Lin in his seminal paper~\cite{lin1974structural} on structural controllability, and has since seen substantial development, including work on structural stability by Belabbas and Kirkoryan~\cite{belabbas2013sparse,belabbaskirkoryancdc2014decentralized} and work on ensemble structural controllability by Chen~\cite{chen2021sparse}. These papers deal with criteria for fixed topologies; the study of random topologies was initiated for structural stability by Belabbas and Kirkoryan in~\cite{belabbas2022stable} for the case of Erd\H os--R\'enyi random graphs, and by Belabbas, Chen and Ba\c{s}ar~\cite{belabbas2023geometric,belabbas2021h} for the case of step-graphons. Controllability for specific random systems, as opposed to random structures, has been investigated by O'Rourke and Touri~\cite{o2016conjecture}, where they proved a conjecture of Godsil~\cite{godsil2012controllable}.

In the case of linear systems, the topology $D$ corresponds canonically  to a {\em sparse matrix space} (SMS) or {\em zero pattern}
$$
Z_E:=\Span\{e_{ji}:(i,j)\in \vec E\}\subseteq \R^{n\times n},
$$ where $e_{ij}$ is the canonical matrix with a $1$ in position $(i,j)$, and zeros everywhere else; equivalently, $Z_E$ is the set of matrices with a fixed $0/\ast$ pattern. See Figure~\ref{fig:HD} for an illustration of the correspondence. We say $Z_E$ (or, equivalently, $D$) is {\em stable} if it contains a Hurwitz matrix, i.e., a matrix whose eigenvalues all have strictly negative real part. Since the set of Hurwitz matrices is open, $Z_E$ contains a Hurwitz matrix if and only if it contains one whose entries are nonzero at every position allowed by the pattern; thus, for linear dynamics, $D$ sustains asymptotic stability in the sense above if and only if $Z_E$ is stable.

\begin{figure}
\centering
\begin{minipage}{0.1\textwidth}
\begin{tikzpicture}[scale=1.2]
\begin{scope}[xshift=-5cm]
\node [circle,fill=black,inner sep=1.5pt,label=left:{\color{black}\footnotesize $v_2$}] (2) at (0, -0.9) {};
\node [circle,fill=black,inner sep=1.5pt,label=above:{\color{black}\footnotesize $v_1$}] (1) at (1, 0) {};
\node [circle,fill=black,inner sep=1.5pt,label=right:{\color{black}\footnotesize $v_5$}] (5) at (2, -0.9) {};
\node [circle,fill=black,inner sep=1.5pt,label=below:{\color{black}\footnotesize $v_3$}] (3) at (0.3, -1.9) {};
\node [circle,fill=black,inner sep=1.5pt,label=below:{\color{black}\footnotesize $v_4$}] (4) at (1.7, -1.9) {};
\node [] at (0, -2.5) {};

\path[draw,thick,shorten >=2pt,shorten <=2pt]
(1) edge[blue,bend right=5,-latex] (2)
(2) edge[help lines,opacity=1,bend right=5,-latex] (3)
(3) edge[blue,bend right=10,-latex] (4)
(4) edge[blue,bend right=10,-latex] (3)
(4) edge[help lines,opacity=1,bend right=5,-latex] (5)
(2) edge[blue,bend right=5,-latex] (5)
(5) edge[blue,bend right=5,-latex] (1)
;
\end{scope}
\end{tikzpicture}
\end{minipage}
\hspace{3cm}
$\begin{bmatrix}
0 & 0 & 0 & 0 & \ast\\
\ast & 0 & 0 & 0 & 0\\
0 & {\ast} & 0 & \ast & 0\\
0 & 0 & \ast & 0 & 0\\
0 & \ast & 0 & {\ast} & 0
\end{bmatrix}$
\caption{The correspondence between digraphs and classes of linear systems: an edge $v_i\to v_j$ corresponds to a potentially nonzero entry in position $(j,i)$. A Hamiltonian decomposition (a set of vertex-disjoint directed cycles covering all $n$ vertices) is depicted in blue.}\label{fig:HD}
\end{figure}

Though a stable SMS contains an open set of Hurwitz matrices, this set may be {\em vanishingly small} relative to the ambient space. Illustrating this fact is the trivially stable diagonal SMS case, which contains only a $2^{-n}$ fraction of Hurwitz matrices under independent Gaussian sampling for its $\ast$-entries. Hence, sampling matrices from the pattern does not scale well computationally with $n$: finding a Hurwitz matrix certifies stability, but the diagonal case shows that an {\em exponentially large} number of samples may be needed to find one and, reciprocally, probabilistic certificates of {\em lack} of stability obtained by sampling are at least as computationally expensive.  This is in contrast with structural controllability~\cite{lin1974structural}, where the property holds either generically or not at all. This failure of probabilistic certificates motivated the search for analytic, graph-theoretic conditions for stability and led to the following pair of conditions, stated in terms of cycle covers: a {\em cycle cover} of a digraph is a set of vertex-disjoint directed cycles, loops counting as cycles of length one, that together contain every vertex:

\begin{theorem}[\cite{belabbas2013sparse}]\label{th:stabcond_intro}
Let $D=(V,\vec E)$ be strongly connected. Then $D$ is stable
\begin{itemize}
\item[\textup{(a)}] {\em only if} property $\mathcal N$ holds: for every $k\in\{1,\dots,n\}$ there exists $V_k\subseteq V$ with $|V_k|=k$ such that $D[V_k]$ admits a cycle cover;
\item[\textup{(b)}] {\em if} property $\mathcal S$ holds: there exists
a nested sequence of vertex sets $V_1\subset V_2\subset\cdots\subset V_n=V$ with $V_1$ consisting of a single vertex with a loop (a {\em looped} vertex), $|V_k|=k$, and each $D[V_k]$ admits a cycle cover.
\end{itemize}
\end{theorem}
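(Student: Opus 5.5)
Both parts rest on the characteristic polynomial of a matrix $A\in Z_E$, written $\det(sI-A)=s^n+a_1s^{n-1}+\cdots+a_n$. By the Leibniz expansion, $(-1)^k a_k$ is the sum of all principal $k\times k$ minors of $A$. A principal minor indexed by $U\subseteq V$ is a sum over permutations of $U$. Its nonzero terms correspond exactly to cycle covers of $D[U]$, because a permutation contributes a nonzero product of pattern entries only if each of its cycles is a directed cycle of $D$.

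\textbf{Part (a).} The plan is to combine this with the necessary Routh--Hurwitz condition. If $A$ is Hurwitz, every coefficient $a_k$ of its characteristic polynomial is strictly positive, since the polynomial is a product of factors $(s+\alpha)$ and $(s^2+2\alpha s+\alpha^2+\beta^2)$ with $\alpha>0$. In particular $a_k\neq 0$, so some principal $k$-minor of $A$ is nonzero. By the expansion above, that minor's index set $V_k$ induces a subdigraph $D[V_k]$ with a cycle cover. Doing this for each $k=1,\dots,n$ gives $\mathcal N$. Strong connectivity is not needed for this direction.

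\textbf{Part (b).} Here I will build a Hurwitz matrix inductively along the chain $V_1\subset\cdots\subset V_n$. The invariant at stage $k$ is that $A_k$ is supported on the pattern of $D[V_k]$, $A_k$ is Hurwitz, and every eigenvalue of $A_k$ is simple.
\begin{itemize}
\item \emph{Base case.} Take $A_1=(-1)$ on the looped vertex.
\item \emph{Inductive step.} Let $V_{k+1}=V_k\cup\{v\}$. Choose a cycle cover $C$ of $D[V_{k+1}]$ and scale its edge weights by a small parameter $\varepsilon$. Set
$$A_{k+1}(\varepsilon)=\begin{pmatrix}A_k & 0\\ 0 & 0\end{pmatrix}+\varepsilon B,$$
where $B$ is supported on the edges of $C$ together with the edges of $D[V_{k+1}]$ joining $v$ to $V_k$.
\item \emph{Perturbation.} At $\varepsilon=0$ the spectrum consists of the eigenvalues of $A_k$, all with negative real part, and a simple eigenvalue $0$. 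For small $\varepsilon$ the stable eigenvalues remain stable and simple. The zero eigenvalue moves to $\lambda(\varepsilon)=\varepsilon\mu+O(\varepsilon^2)$.
\item \emph{Sign of the drift.} A cleaner way to control $\mu$ uses the determinant. The product of all eigenvalues is $\det A_{k+1}(\varepsilon)$, so $\lambda(\varepsilon)\approx\det A_{k+1}(\varepsilon)/\det A_k$. The lowest-order term of $\det A_{k+1}(\varepsilon)$ in $\varepsilon$ is a sum of cycle-cover contributions. I will choose weights so that this leading term is nonzero and has sign $(-1)^{k+1}$ times that of $\det A_k$'s sign pattern. This requires a single cover to dominate, which is arranged by assigning generic, well-separated weights.
\item \emph{Conclusion of the step.} With this sign, $\lambda(\varepsilon)$ is real and negative. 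The flexibility comes from being free to flip the sign of one edge weight on a cycle through $v$.
\end{itemize}

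The main obstacle is showing that a cycle through $v$ actually contributes at lowest order. The cover $C$ of $V_{k+1}$ may not leave the structure of $A_k$ unperturbed: the cycle through $v$ can also pass through vertices of $V_k$. I would handle this by perturbing $A_k$ itself only generically, since Hurwitzness is an open condition. Then $\det A_{k+1}$, viewed as a polynomial in the new entries, is not identically zero because the cover $C$ exists. A one-parameter family in which $\det$ changes sign forces the real eigenvalue leaving $0$ to cross to the left; I will show its sign is tunable by the weight on a single edge of the $v$-cycle.

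Strong connectivity is a standing hypothesis. The induction above does not appear to use it, though in the paper it may be used to allow off-diagonal coupling to a single nonzero entry.
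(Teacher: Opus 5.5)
The paper does not prove this theorem; it quotes it from \cite{belabbas2013sparse}, so there is no in-paper argument to compare with. Your part (a) is the standard argument and is complete. You are also right that neither part uses strong connectivity.

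For part (b), induction along the chain with a small perturbation at each step is the right strategy and does go through. However, the step fixing the sign of the drift is not valid as written, and your last paragraph contains the right fix only in loose form.

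\begin{itemize}
\item Making ``a single cover dominate'' via ``generic, well-separated weights'' is neither justified nor needed. The lowest-order coefficient involves the entries of $A_k$, and you do not control these freely, since $A_k$ must stay Hurwitz.
\item In your $B$ the edges of $C$ inside $V_k$ carry a factor $\varepsilon$, so they never enter the lowest-order term. Whether the permutation of $C$ appears there depends only on the support of $A_k$.
\item Negating one edge weight flips the sign of the leading term only if that edge is the sole in-edge (or out-edge) of $v$ in the support of $B$.
\item What must be shown nonzero is not $\det A_{k+1}(\varepsilon)$ but its lowest-order coefficient in $\varepsilon$.
\end{itemize}

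Here is the precise version. Write $A_{k+1}(\varepsilon)=\begin{pmatrix}A_k&\varepsilon b\\ \varepsilon c^\top&\varepsilon d\end{pmatrix}$. Here $b$ (resp.\ $c$) is supported on the out-neighbours (resp.\ in-neighbours) of $v$ in $V_k$, and $d$ is allowed only if $v$ is looped.

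\begin{itemize}
\item Since $A_k$ is invertible, $0$ is a simple eigenvalue at $\varepsilon=0$. The Schur complement shows that the unique eigenvalue near $0$ is real (it is the only eigenvalue near $0$ of a real matrix) and equals $\varepsilon d-\varepsilon^2c^\top A_k^{-1}b+O(\varepsilon^3)$. The other $k$ eigenvalues stay in the open left half-plane by continuity.
\item If $v$ is looped, take $d<0$.
\item Otherwise $d=0$, and $\det(A_k)\,c^\top A_k^{-1}b=-\det\begin{pmatrix}A_k&b\\ c^\top&0\end{pmatrix}$. This is a polynomial in the pattern entries whose monomials correspond bijectively, with coefficients $\pm1$ and no cancellation, to the cycle covers of $D[V_{k+1}]$. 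It is therefore not identically zero, because $C$ exists.
\item The Hurwitz matrices with the pattern of $D[V_k]$ form a nonempty open set. So you may choose such an $A_k$, together with $b,c$, off the zero set of this polynomial.
\item Replace $c$ by $-c$ if necessary so that $c^\top A_k^{-1}b>0$. Then $A_{k+1}(\varepsilon)$ is Hurwitz for all small $\varepsilon>0$.
\end{itemize}

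The simplicity invariant is unnecessary. The cover $C$ serves only to certify that this polynomial is nonzero, so there is no need to add its $\varepsilon$-scaled edges inside $V_k$.
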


The gap between the necessary $\mathcal N$ and the sufficient $\mathcal S$ is the central obstacle to a complete graph-theoretic classification of stable digraphs, and closing it remains an {\em open problem}. In the {\em symmetric} case, in which $(i,j)\in\vec E\Leftrightarrow (j,i)\in\vec E$, the gap surprisingly closes: $\mathcal N$ is both necessary {\em and} sufficient~\cite{belabbaskirkoryancdc2014decentralized}. We emphasize that a symmetric {\em SMS} does not imply the matrices in it are symmetric matrices, only that potentially non-zero entries are placed symmetrically; in fact, symmetric patterns with symmetric matrices are easily seen to be stable if and only if every vertex has a loop~\cite{belabbaskirkoryancdc2014decentralized}.

\subsection{Why random topologies?}\label{ssec:intro_random}

Structural system theory originally presumed the topology $D$ to be given. In many modern applications, however, this is not the case. A network designer may want to know not whether a specific $D$ sustains stability, but how likely a stable $D$ is among graphs drawn from a stochastic model relevant to the deployment scenario. The probability that a sampled $D$ sustains a property $\mathcal P$ measures the prevalence of that property among topologies, and is of practical value in several ways:

\begin{itemize}
\item {\em Abundance versus scarcity.} Classical structural system theory encodes a form of robustness, in that if a topology sustains a given dynamical property $\mathcal P$, this property is generic  in the case of controllability, or open in the case of stability, as we discussed earlier. Random topologies enable a stronger form  of robustness.  If $\Pr(\mathcal P)$ is close to $1$, the property is widespread and is likely to survive perturbations of the topology, such as losses of edges, additions of edges, or replacement of agents. If it is close to $0$, sustaining $\mathcal P$ requires careful design and offers little robustness margin. This is critical information for a network designer deciding whether to deploy expensive network systems in uncertain or adversarial environments~\cite{belabbas2022stable}.

\item {\em Modeling under uncertainty.} For social, biological, or estimated networks, dealing with deterministic topologies is not  meaningful: the graph must be modeled as a random object, since any estimate is affected by graph-valued noise (see, e.g.,~\cite{wolfe2013nonparametric}). The relevant question is then necessarily probabilistic.

\item {\em Inverse design.} Knowing which features of a stochastic model push $\Pr(\mathcal P)$ toward $1$ or $0$ tells the designer what to control when synthesizing a topology distribution that reliably sustains the property.
\end{itemize}

The structural stability of systems sampled from {\em symmetric} graphons has been the subject of recent work:~\cite{belabbas2022stable} treats the symmetric Erd\H{o}s--R\'enyi case;~\cite{belabbas2023geometric, belabbas2021h, belabbas2024graphons} extend the analysis to general symmetric step-graphons; and~\cite{gao2025h} addresses the boundary regime identified in~\cite{belabbas2023geometric, belabbas2021h}. In all of these works the relevant combinatorial object is the {\em edge polytope} of the skeleton, and a unified analysis is possible because, in the symmetric case, $\mathcal N$ already implies $\mathcal S$ via the resolution of~\cite{belabbaskirkoryancdc2014decentralized}.

\subsection{From symmetric to directed graphons and from edge to cycle polytopes}\label{ssec:intro_directed}

This paper extends the random topology program to the directed setting, which differs from the symmetric one in two essential ways. First, the two conditions no longer coincide: $\mathcal N$ and $\mathcal S$ must be analyzed separately, and $\mathcal S$ requires a strictly stronger combinatorial certificate, as described earlier. Second, the {\em edge polytope}, which was at the core of the earlier graphon-based results~\cite{belabbas2021h}, is  not the right object anymore. In the symmetric setting, an $n$-cycle cover (i.e., a cycle cover of all $n$ vertices, also called a cycle factor; called a Hamiltonian decomposition  in~\cite{belabbas2013sparse}) requires only that each vertex be incident to a chosen edge, and the constraint reduces to a vertex incidence count, which we showed was exactly captured by the edge polytope. In the directed setting, a cycle cover requires each vertex to have {\em outdegree and indegree exactly one}, so the constraint becomes a {\em balanced circulation}: in-flow must equal out-flow. The corresponding polytope (the {\em cycle polytope}) arises naturally from this circulation requirement.

In~\cite{chen2025hamiltonicity}, Chen studies the existence of $n$-decompositions (this corresponds to the case $k=n$ of condition $\mathcal N$ and Hamiltonian cycles for the same directed step-graphons model, and obtains conditions in terms of the node-circulation cone of the skeleton (the intersection of this cone with the hyperplane $\bfo^\top x=1$ is the cycle polytope used here) that guarantee the existence of such cycles asymptotically almost surely. 

Cycle polytopes also arise outside the present graphon problem. In polyhedral combinatorial optimization, the directed cycle polytope of a digraph is usually defined as the convex hull of the edge-incidence vectors of its simple directed cycles, and has been studied through its facets, relaxations, and connections with the asymmetric traveling-salesman polytope~\cite{balasOosten2000cycle,balasStephan2009cycleRelaxations}. Variants such as the $p$-cycle polytope appear in cardinality-constrained cycle and path problems~\cite{hartmannOzluk2001pcycle}. More recently, feasible regions of consecutive permutation-pattern densities were identified as cycle polytopes of overlap digraphs~\cite{borgaPenaguiao2020feasible}. The polytope used here is the image of the convex hull of the edge-incidence vectors of the simple directed cycles, each divided by the length of its cycle, under the linear map that sums, for each node of the skeleton, the coordinates belonging to the edges leaving that node. Taking this image discards the identity of the edges but keeps exactly the circulation constraints relevant to cycle covers.

\subsection{Main results}\label{ssec:intro_main}

Our two main theorems characterize when properties $\mathcal N$ and $\mathcal S$ hold {\em asymptotically almost surely}, in terms of structural conditions on $W$.

\xc{Skeleton, types and sampling procedure:} Before stating the conditions, we fix the notation these conditions require; the formal definitions are in Section~\ref{sec:dir_stepgraphons}. A directed step-graphon $W$ is specified by a number $q$ of {\em types}, a {\em concentration vector} $x^*$ in the standard simplex $\Delta_q\subseteq\R^q$, and {\em probabilities} $p_{ij}\in[0,1]$, $i,j\in\{1,\dots,q\}$. In a digraph $D_n\sim W$ on $n$ vertices, each vertex is assigned a type, type $i$ receiving a fraction of the vertices of $D_n$ close to $x^*_i$, and an edge from a vertex of type $i$ to a vertex of type $j$ is then present independently with probability $p_{ij}$; vertices of the same type are thus statistically interchangeable. The {\em skeleton} $S$ is the digraph  whose nodes $u_1,\dots,u_q$ are the {\em types}, with an edge $u_i\to u_j$  precisely when $p_{ij}>0$. Finally, the cycle polytope $\vec{\mathcal X}(S)$ of the previous subsection is a subset of the same simplex $\Delta_q$ in which $x^*$ lies, so the two can be compared directly. 

The conditions are:

\begin{description}
\item[(A)] {\em Loop density.} $\Lambda:=\sum_{i=1}^q x^*_i p_{ii}>0$; equivalently, a positive fraction of the vertices of $D_n$ have a loop.
\item[(B), ($\text{B}^*$)] {\em Feasibility.} $x^*\in\vec{\mathcal X}(S)$, respectively $x^*\in\rint\vec{\mathcal X}(S)$.
\item[(C)] {\em Full dimension.} $\dim\vec{\mathcal X}(S)=q-1$; that is, $\vec{\mathcal X}(S)$ is not contained in a proper affine subspace of $\Delta_q$.
\item[(D)] {\em Cycle ordering.} The nodes of $S$, $u_i$, $1\leq i\leq q$, can be ordered as $t_1,\dots,t_q$ in such a way that, for {\em every} $1 \leq r \leq q $, each node $t_1,\dots,t_r$ belongs to a  directed cycle of $S[\{t_1,\dots,t_r\}]$. 
\end{description}

The two theorems of Section~\ref{ssec:main_thms} can then be summarized as follows:
$$
\begin{array}{rclcrcl}
\text{A},\ \text{B}^*,\ \text{C}&\Longrightarrow&\Pr(\mathcal N)\to1,
&\qquad&
\text{one of A, B, C fails}&\Longrightarrow&\Pr(\mathcal N)\to0,\\[2pt]
\text{A},\ \text{B}^*,\ \text{C},\ \text{D}&\Longrightarrow&\Pr(\mathcal S)\to1,
&\qquad&
\text{one of A, B, C or D fails}&\Longrightarrow&\Pr(\mathcal S)\to0.
\end{array}
$$
The necessary and the sufficient conditions  for both $\mathcal N$ and $\mathcal S$ therefore agree except on the {\em boundary} of the cycle polytope, where sufficiency asks for $\mathrm{B}^*$ while necessity delivers only B. We call these conditions {\em essentially} necessary and sufficient. The remaining case, $x^*\in\vec{\mathcal X}(S)\setminus\rint\vec{\mathcal X}(S)$, is not an artifact of the proofs: there $\Pr(\mathcal N)$  fails to  always converge to $1$, and Proposition~\ref{prop:rint_necessary_prob1} bounds it by $1/2$. The exact asymptotics in that regime were determined for the symmetric case in~\cite{gao2025h}; for the directed case treated here they remain open. Convergence to one is nevertheless characterized {\em exactly}: since D implies C (Remark~\ref{rem:DimpliesCA}) and, when C holds, Proposition~\ref{prop:rint_necessary_prob1} rules out convergence to one on the boundary, $\Pr(\mathcal N)\to 1$ if and only if A, $\mathrm{B}^*$ and C hold, and $\Pr(\mathcal S)\to1$ if and only if $\mathrm{B}^*$ and D hold. 

Finally, Conditions A and C are in fact both implied by D (see Remark~\ref{rem:DimpliesCA} below); the formal statements below will only mention B and D for  $\mathcal S$. % \implies \mathcal N$, and because our conditions are essentially necessary and sufficient, {\em Condition D precisely encodes the gap} between necessary and sufficient conditions for structural stability.

\subsection{Proof overview}\label{ssec:intro_proofs}

The cycle polytope is what links the sampled graph to the structural conditions. The key observation, which underlies every necessity argument in the paper, is that 

\begin{quote} {\em any cycle cover on a vertex set $V'$ of $D_n$ forces the type frequency vector of $V'$ (i.e., the ratios (\# vertices of type $i$)/$|V'|$, for $i=1,\ldots,q$) into the cycle polytope}.  
\end{quote}
This holds because a cycle cover induces, after normalization, a unit circulation in the skeleton graph, that is, a nonnegative flow on each edge of $S$, of total flow one, with inflow equal to outflow at every node. Combined with the standard fact that the type frequency vector concentrates near $x^*$ asymptotically, this immediately shows that $x^*\in\vec{\mathcal X}(S)$ is necessary for both $\mathcal N$ and $\mathcal S$. This is Lemma~\ref{lem:cover_implies_Xdir}. See Figure~\ref{fig:overview} for an illustration.

\begin{figure}[t]
\centering
\begin{tikzpicture}[
  >=Latex,
  every node/.style={font=\small},
  t1/.style={circle,draw=green!55!black,fill=green!22,minimum size=5.2mm,inner sep=0pt,font=\scriptsize},
  t2/.style={circle,draw=orange!75!black,fill=orange!30,minimum size=5.2mm,inner sep=0pt,font=\scriptsize},
  t3/.style={circle,draw=RoyalBlue!75!black,fill=RoyalBlue!22,minimum size=5.2mm,inner sep=0pt,font=\scriptsize},
  cov/.style={->,thick,black},
  oth/.style={->,gray!50,thin},
  lab/.style={font=\scriptsize}
]

\node[t1] (a1) at (0,1.15)   {};
\node[t1] (a2) at (1.0,1.15) {};
\node[t1] (a3) at (2.2,1.15) {};
\node[t2] (b1) at (3.2,1.15) {};
\node[t2] (b2) at (0,-0.35)  {};
\node[t3] (c1) at (1.0,-0.35) {};
\node[t2] (b3) at (2.2,-0.35) {};
\node[t3] (c2) at (3.2,-0.35) {};

\draw[oth] (a1) to[bend left=20] (a2);
\draw[oth] (a2) to[bend left=20] (a3);
\draw[oth] (a2) to[bend right=15] (b2);
\draw[oth] (b1) to[bend left=18] (c2);
\draw[oth] (c1) to[bend left=15] (b3);
\draw[oth] (b3) to[bend left=18] (a3);

\draw[cov] (a1) to[out=115,in=65,looseness=9] (a1);
\draw[cov] (a2) to[out=115,in=65,looseness=9] (a2);
\draw[cov] (a3) to[bend left=22] (b1);
\draw[cov] (b1) to[bend left=22] (a3);
\draw[cov] (b2) to[bend left=22] (c1);
\draw[cov] (c1) to[bend left=22] (b2);
\draw[cov] (b3) to[bend left=22] (c2);
\draw[cov] (c2) to[bend left=22] (b3);

\node[lab,anchor=north] at (1.6,-0.95) {$D_n\sim W$, with a cycle cover in black};

\draw[->,very thick,gray!70] (3.85,0.4) -- (4.85,0.4);
\node[lab,anchor=south] at (4.35,0.5) {$\pi$};

\node[t1] (u1) at (5.6,1.15) {$u_1$};
\node[t2] (u2) at (7.1,1.15) {$u_2$};
\node[t3] (u3) at (7.1,-0.35) {$u_3$};

\draw[cov] (u1) to[out=155,in=105,looseness=9] node[lab,above left=-1pt] {$2$} (u1);
\draw[cov] (u1) to[bend left=20] node[lab,above=-1pt] {$1$} (u2);
\draw[cov] (u2) to[bend left=20] node[lab,below=-1pt] {$1$} (u1);
\draw[cov] (u2) to[bend left=25] node[lab,right=-1pt] {$2$} (u3);
\draw[cov] (u3) to[bend left=25] node[lab,left=-1pt] {$2$} (u2);

\node[lab,anchor=north,align=center] at (6.35,-0.95) {skeleton $S$, with\\the circulation $N=[n_{ij}]$};

\draw[->,very thick,gray!70] (8.0,0.4) -- (9.0,0.4);
\node[lab,anchor=south,align=center] at (8.5,0.5) {$A=\tfrac1n N$};

\coordinate (e1) at (9.5,-0.75);
\coordinate (e2) at (13.0,-0.75);
\coordinate (e3) at (11.25,2.28);
\draw[gray!45] (e1)--(e2)--(e3)--cycle;
\node[lab,gray!70,below left=-2pt]  at (e1) {$e_1$};
\node[lab,gray!70,below right=-2pt] at (e2) {$e_2$};
\node[lab,gray!70,above=-2pt]       at (e3) {$e_3$};

\coordinate (v1) at (9.5,-0.75);
\coordinate (v2) at (11.25,-0.75);
\coordinate (v3) at (12.125,0.765);
\fill[RoyalBlue!14] (v1)--(v2)--(v3)--cycle;
\draw[RoyalBlue!60,thick] (v1)--(v2)--(v3)--cycle;
\node[lab,RoyalBlue!60!black,anchor=north] at (10.25,-0.30) {$\vec{\mathcal X}(S)$};

\draw[->,Mahogany!55,dashed,shorten >=2pt,shorten <=2pt] (11.25,0.0075) -- (11.60,0.159);
\node[circle,fill=black,inner sep=1.15pt] (xv) at (11.25,0.0075) {};
\node[lab,anchor=east] at (11.10,-0.03) {$x(V)$};
\node[circle,draw=Mahogany,thick,fill=white,inner sep=1.15pt] (xs) at (11.60,0.159) {};
\node[lab,Mahogany,anchor=west] at (11.70,0.20) {$x^*$};

\node[lab,anchor=north,align=center] at (11.25,-1.05) {type frequency vector inside\\the cycle polytope};
\end{tikzpicture}
\caption{The three objects used in the necessity arguments. Left: a sample $D_n$ on $n=8$ vertices, colored by type, with a cycle cover drawn in black and some further admissible edges in gray. Middle: the type map $\pi$ sends each color class to a node of $S$ and maps the cycle cover to a circulation, $n_{ij}$ counting the cover edges going from type $i$ to type $j$. Since every vertex of a cycle cover has one outgoing and one incoming edge, the row and the column sums of $N$ are both equal to the type counts. Right: dividing by $n$ turns $N$ into a unit circulation, so $x(V)=\tfrac18(3,3,2)$ lies in $\vec{\mathcal X}(S)$. As $n$ grows, $x(V)\to x^*$, whence $x^*\in\vec{\mathcal X}(S)$: this is Condition B, and it is necessary for both $\mathcal N$ and $\mathcal S$.}
\label{fig:overview}
\end{figure}

The second necessity argument is about the {\em dimension} of the polytope. When the cycle polytope has codimension one or more in the simplex, belonging to it imposes an extra linear constraint on the random type counts, in addition to  the constraint that they sum to $n$. A simple argument (see Lemma~\ref{lem:anti_conc}) shows that any  such equality constraint is satisfied  with probability vanishing as $O(n^{-1/2})$. The same approach yields the boundary bound: when $x^*$ is on the boundary of $\vec{\mathcal X}(S)$, $\Pr(\mathcal N)\leq 1/2 + o(1)$ rather than tending to $1$ (Proposition~\ref{prop:rint_necessary_prob1}).

The sufficiency arguments are constructive, and the following observation underlies sufficiency for both properties $\mathcal N$ and $\mathcal S$:  Suppose we are given a nested family of vertex sets whose type frequency vectors all lie in one fixed compact subset of $\rint\vec{\mathcal X}(S)$. On such a set the Hall inequalities of Lemma~\ref{lem:hall_Xdir} hold with a {\em constant margin}, i.e., a margin that does not decay as the sets grow. This margin is  precisely what the associated random bipartite lift of the induced subgraph requires in order to  contain a perfect matching (which corresponds to a cycle cover of $D$). With probability $1-o(1)$, the bipartite lifts of all sufficiently large members of the family admit perfect matchings at the same time, and each such matching is a cycle cover of the corresponding induced subgraph. This is Lemma~\ref{lem:dense_block_matching_combined}, and its specialization to a single nested sequence of vertex sets is Lemma~\ref{lem:uniform_chain_realization}.

For $\mathcal N$, this is essentially the entire argument. The looped vertices guaranteed by Condition A provide cycle covers of all sizes up to a fixed fraction of $n$, and ordering the sampled vertices so that the type frequency vector of every prefix stays close to that of the whole vertex set (a {\em balanced ordering}, Lemma~\ref{lem:balanced_prefix_rounding_aux}) keeps all longer prefixes near $x^*$, hence inside the compact set. Property $\mathcal S$ is harder, since the sets must form a single {\em nested sequence} starting at a looped vertex, and this is where Condition D enters. 

Geometrically, D says the following: for every $r$, the polytope $\vec{\mathcal X}(S)$ contains a point whose coordinates $t_1,\dots,t_r$ are all strictly positive and whose remaining coordinates are exactly zero. By Proposition~\ref{prop:realizable_supports}, the points of $\vec{\mathcal X}(S)$ that vanish outside $t_1,\dots,t_r$ then form a set of dimension $r-1$, the largest a set of vectors with $r$ nonzero coordinates summing to $1$ can have. Lemma~\ref{lem:integer_path} turns this into a finite list of integer type counts, each obtained from the previous one by adding a single vertex, starting from a looped node and ending at a count whose normalization lies in the relative interior of the polytope. Call a vector of type counts {\em feasible} if its normalization lies in $\vec{\mathcal X}(S)$. Since a count is feasible exactly when the corresponding blow-up of the skeleton (we call blow-up of the skeleton the digraph obtained by replacing each of its nodes  by as many vertices as the count assigns to it, and drawing an edge from every copy of $u_i$ to every copy of $u_j$ whenever $u_i\to u_j$ is an edge of $S$) can be covered by disjoint cycles (Proposition~\ref{prop:blowup_cover}), the list assembles into one finite digraph $H$, all of whose prefixes admit cycle covers. We then split $D_n$ into two independent random digraphs, or {\em layers}, whose union has the law of $D_n$, find $\Theta(n)$ disjoint copies of $H$ using the first layer, append the remaining vertices in a balanced order, and let the second layer supply the cycle covers along the whole extension.

\subsection{Outline of the paper}\label{ssec:intro_outline}

Section~\ref{sec:dir_stepgraphons} introduces directed step-graphons, the sampling procedure, the type map, cycle covers and their bipartite lifts, the properties $\mathcal N$ and $\mathcal S$, and the cycle polytope $\vec{\mathcal X}(S)$. Section~\ref{sec:conditions} states Conditions A, B, $\mathrm{B}^*$, C and D and the main theorems. Section~\ref{sec:preliminaries} collects what we need about the polytope itself: its description as a convex hull, its description by linear inequalities, the  interpretation of its integer points as cycle covers in blow-ups of the skeleton, what Condition D says about the polytope, and the balanced-ordering lemma. Section~\ref{sec:chains} builds the two devices shared by the sufficiency proofs, one deterministic and one probabilistic: a finite digraph extracted from Condition D, all of whose prefixes admit cycle covers, and a lemma that produces cycle covers along a prescribed nested sequence of vertex sets inside a random graph. Section~\ref{sec:necessary} proves the necessary conditions and the boundary bound. Sections~\ref{sec:sufficient_N} and~\ref{sec:sufficient_S} then prove sufficiency for $\mathcal N$ and for $\mathcal S$; both these sections are relatively short because, by that point, most of  the work has been done. We conclude in Section~\ref{sec:conclusion}.

\subsection*{Notation}\label{ssec:notation}
We gather here, for ease of reference, the important notation used throughout the paper. The precise definitions of some notation recalled here will be given below. For a positive integer $q$, $[q]:=\{1,\dots,q\}$ and $\bfo$ is the all-ones vector (its dimension is clear from context). We write $\aff(\cdot)$ for affine hull, and $\rint_H(F)$ for the relative interior of a  set $F$ inside an affine subspace $H\supseteq\aff(F)$. The standard simplex and its relative interior are
$$
\Delta_q:=\{x\in\R_{\geq 0}^q:\bfo^\top x=1\},
\qquad
\rint(\Delta_q):=\{x\in\R_{>0}^q:\bfo^\top x=1\}.
$$
For $x\in\R^q$ and $I\subseteq[q]$, we write
$$
x_I:=\sum_{i\in I}x_i=\mathbf 1_I^\top x,
$$ where $\mathbf 1_I\in\{0,1\}^q$ is the indicator vector of $I$. We write $A\Subset B$ when $A$ has compact closure contained in $B$. The closed $\ell_\infty$-ball of radius $r$ centered at $x$ is $\mathrm B_\infty(x,r)$, and $\dist_\infty(\cdot,\cdot)$ denotes $\ell_\infty$-distance.

$\Bin(n,p)$ and $\mathrm{Mult}(n;p)$ denote the binomial and multinomial distributions, respectively.

A directed step-graphon $W$ has $q$ types, which are recorded as the nodes $U=\{u_1,\dots,u_q\}$, block probabilities $(p_{ij})_{i,j\in[q]}$, concentration vector $x^*\in\Delta_q$, and skeleton digraph $S=(U,F)$ with $(u_i\to u_j)\in F\Leftrightarrow p_{ij}>0$. We identify a node $u_i$ with its index $i$ when writing coordinates and subsets of $U$. The set of looped nodes of $S$ is
$$
L:=\{i\in[q]:p_{ii}>0\}.
$$
Two derived quantities recur throughout: the loop density and the minimum edge probability:
$$
\Lambda:=\sum_{i=1}^q x_i^*p_{ii},
\qquad
p_{\min}:=\min\{p_{ij}:(u_i\to u_j)\in F\}\in(0,1].
$$
For $I\subseteq U$, the out-neighbor and in-neighbor sets in $S$ are
$$
N^+(I):=\{u_j:\exists\,u_i\in I,\,(u_i\to u_j)\in F\},
\quad
N^-(I):=\{u_j:\exists\,u_i\in I,\,(u_j\to u_i)\in F\}.
$$
For nonempty proper $I\subsetneq U$, the {\em Hall functional} is the linear map
$\Phi_I(x):=x_{N^+(I)}-x_I$; Lemma~\ref{lem:hall_Xdir} shows that $\vec{\mathcal X}(S)$ is cut out of $\Delta_q$ by the inequalities $\Phi_I\ge0$, and several arguments require a uniform {\em margin} $\Phi_I\ge\delta>0$.

Vertices of $D_n\sim W$ inherit a type via the type map $\pi:V\to U$ (Definition~\ref{def:type_map}). For $V'\subseteq V$,
$$
N_i(V'):=|\{v\in V':\pi(v)=u_i\}|,
\qquad
x(V'):=\frac{1}{|V'|}\bigl(N_1(V'),\dots,N_q(V')\bigr)
$$ are the {\em type counts} and the {\em type frequency vector} of $V'$. We write $N_i:=N_i(V)$ and $x:=x(V)$ for the global versions, and $B(D)$ for the bipartite lift of a directed graph $D$ (Definition~\ref{def:bipartite_rep}).

The set of unit circulations on $S$ is
$$
\mathcal A(S):=\bigl\{A\in\R_{\ge0}^{q\times q}:\ A\bfo=A^\top\bfo,\ \bfo^\top A\bfo=1,\ a_{ij}=0\text{ if }(u_i\to u_j)\notin F\bigr\},
$$ and the cycle polytope is
$$
\vec{\mathcal X}(S):=\{A\bfo:A\in\mathcal A(S)\}\subseteq\Delta_q.
$$
For $T\subseteq U$ nonempty, the affine subspace $H_T$ and the support face $F_T$ are
$$
H_T:=\{x\in\R^q:\bfo^\top x=1,\ x_i=0\text{ for }i\notin T\},
\qquad
F_T:=\vec{\mathcal X}(S)\cap H_T.
$$

\section{Setup and definitions}\label{sec:dir_stepgraphons}

This section gathers the definitions used throughout the paper.
\subsection{Directed step-graphons}\label{ssec:dir_stepgraphons}

\begin{definition}[Directed step-graphon and concentration vector]\label{def:dir_stepgraphon}
A {\em directed step-graphon} is a measurable function $W:[0,1]^2\to[0,1]$ for which there exists an increasing sequence $0=\sigma_0<\sigma_1<\cdots< \sigma_q=1$ such that $W$ is constant on each rectangle $[\sigma_{i-1},\sigma_i)\times[\sigma_{j-1},\sigma_j)$ for all $1\leq i,j\leq q$. We denote the corresponding constant by $p_{ij}\in[0,1]$ and define the {\em concentration vector} $x^*=(x_1^*,\dots,x_q^*)$ by $x_i^*:=\sigma_i-\sigma_{i-1}$.
\end{definition}

Since the sequence $\sigma_0<\cdots<\sigma_q$ is strictly increasing, every $x_i^*=\sigma_i-\sigma_{i-1}>0$; thus $x^*$ has full support. If $p_{ij}=p_{ji}$ for all pairs $1\leq i,j\leq q$, then $W$ is a (symmetric) step-graphon.

%\subsection{Sampling}\label{ssec:sampling}

We sample directed graphs from a directed step-graphon model as in the symmetric case, except that the two orientations of each pair of vertices, and each loop, are sampled independently.

\begin{definition}[Sampling procedure]\label{def:sampling}
Let $W$ be a directed step-graphon. A random directed graph $D_n=(V,\vec E)$ sampled from $W$, denoted $D_n\sim W$, is obtained as follows: first sample $y_1,\dots,y_n\sim\mathrm{Unif}[0,1]$ independently; then for each ordered pair $(a,b)\in\{1,\dots,n\}^2$, include the directed edge $v_a\to v_b$ independently with probability $W(y_a,y_b)$.
\end{definition}

\subsection{Skeleton digraph and type map}\label{ssec:types_ecv}

The skeleton digraph of $W$ records which blocks may produce edges, i.e., which blocks support a non-zero value for $W$:

\begin{definition}[Directed skeleton digraph]\label{def:dir_skeleton}
The {\em directed skeleton} is the directed graph $S=(U,F)$ on $U=\{u_1,\dots,u_q\}$ with $(u_i\to u_j)\in F$ if and only if $p_{ij}>0$. The node $u_i$ records the $i$th type of $W$.
\end{definition}

The {\em type of a vertex} $v_a$ of $D_n$ is determined by the location of its uniform coordinate $y_a$:

\begin{definition}[Type map]\label{def:type_map}
Let $D_n\sim W$ and let $y_a$ be the coordinate of vertex $v_a$ as in Definition~\ref{def:sampling}. Define the type map $\pi:V\to U$ by $\pi(v_a)=u_i$ if $y_a\in[\sigma_{i-1},\sigma_i)$.
\end{definition}

The type map is a digraph homomorphism from $D_n$ to the skeleton: for every edge $v_a\to v_b$ in $D_n$, we have $(\pi(v_a)\to\pi(v_b))\in F$. We say $v$ has type $i$ if $\pi(v)=u_i$.

We call an ordered pair of nodes $(u_i,u_j)$, and any edge of a sampled graph $D$ from a type-$i$ vertex to a type-$j$ vertex, {\em admissible} if $(u_i\to u_j)\in F$.

\subsection{Type frequencies}\label{ssec:emp_concentration}

We denote by $N_i:=|\{v_a\in V:\pi(v_a)=u_i\}|$ the number of vertices in $D_n$ of type $i$, and define the {\em type frequency vector} of $D_n$ to be $x:=(N_1,\dots,N_q)/n$. Thus $x$ is the sample or empirical  counterpart  of the concentration vector $x^*$, and Lemma~\ref{lem:multinomial_conc} shows that $x$ concentrates near $x^*$. We extend this to induced subgraphs: for $V'\subseteq V$, $N_i(V'):=|\{v\in V':\pi(v)=u_i\}|$ and 
$$x(V'):=(N_1(V'),\dots,N_q(V'))/|V'|\in\R_{\geq 0}^q.$$

\subsection{Cycle covers and bipartite lifts}\label{ssec:cyclecover_def}

\begin{definition}[Cycle cover]\label{def:cycle_cover}
Let $D=(V,\vec E)$ be a directed graph (allowing loops). For $V'\subseteq V$ we say $D[V']$ {\em admits a cycle cover} if there exists $\vec F\subseteq \vec E(D[V'])$ such that in $(V',\vec F)$ every vertex has outdegree $1$ and indegree $1$.
\end{definition}

Equivalently, $V'$ is partitioned into the vertex sets of vertex-disjoint directed cycles of $D[V']$, loops counting as cycles of length one. We call a cycle cover of an induced subdigraph on $k$ vertices a {\em $k$-cycle cover}; thus an $n$-cycle cover of $D_n$ is a cycle cover of $D_n$ itself (a {\em Hamiltonian decomposition} in the terminology of~\cite{belabbas2013sparse}), and a $1$-cycle cover is a loop. A vertex with a loop is called {\em looped}.

\begin{definition}[Bipartite lift]\label{def:bipartite_rep}
Given a directed graph $D=(V,\vec E)$, define a bipartite graph $B(D)$ with left part $L=\{v^{\mathrm{out}}:v\in V\}$ and right part $R=\{v^{\mathrm{in}}:v\in V\}$, and edges $v^{\mathrm{out}}w^{\mathrm{in}}\in E(B(D))$ if and only if $v\to w\in\vec E$.
\end{definition}

The letters $L$ (with a slight abuse of notation, since $L$ also denotes the set of looped nodes of $S$; the context will make the distinction clear.)  and $R$ refer to the {\em left} and {\em right} sides of $B(D)$.

We say that a subset $X\subseteq L$ or $X\subseteq R$ is a {\em Hall obstruction}, or simply an {\em obstruction}, if $|N(X)|<|X|$. The terminology is derived from Hall's theorem, which states that a perfect matching in the bipartite graph $B(D)$ exists if and only if $B(D)$ contains no obstruction $X\subseteq L$, i.e., $|N(X)|\ge|X|$ for every $X\subseteq L$ (recall that $|L|=|R|$).

\subsection{Properties $\mathcal N$ and $\mathcal S$}\label{ssec:NS_def}

The structural-stability background was sketched in Section~\ref{ssec:intro_struct}: for a strongly connected digraph $D=(V,\vec E)$ on $n$ vertices, stability of the corresponding sparse matrix space is governed by two graph-theoretic conditions of~\cite{belabbas2013sparse}: the necessary cycle-cover condition and the sufficient nested-sequence condition. We record them here precisely, since they are the subject of the rest of the paper:

\begin{definition}[Properties $\mathcal N$ and $\mathcal S$]\label{def:NS}
Let $D=(V,\vec E)$ be a digraph on $n$ vertices. We say that $D$ has {\em property $\mathcal N$} if for each $k\in\{1,\dots,n\}$, there exists a $k$-vertex induced subdigraph of $D$ that admits a cycle cover. We say that $D$ has {\em property $\mathcal S$} if there exists a nested sequence $J_1\subset J_2\subset\cdots\subset J_n=V$ with $|J_k|=k$ such that each $D[J_k]$ admits a cycle cover and $J_1$ is a single looped vertex.
\end{definition}

For each fixed digraph, $\mathcal S\Rightarrow\mathcal N$. In the symmetric case, $\mathcal N$ and $\mathcal S$ have been shown to be equivalent~\cite{belabbaskirkoryancdc2014decentralized}.

\subsection{The cycle polytope}\label{ssec:cyclepoly_def}

To translate the cycle-cover requirements of $\mathcal N$ and $\mathcal S$ into testable conditions on the step-graphon $W$, we associate to the skeleton $S$ a polytope of attainable type frequency vectors.

\begin{definition}[Unit circulations]\label{def:A_S}
Let $S=(U,F)$ be a directed graph on $q$ nodes. A {\em unit circulation} on $S$ is a $q\times q$ matrix $A=[a_{ij}]$ with $A\geq 0$ entrywise, $a_{ij}=0$ whenever $(u_i\to u_j)\notin F$, and $A\bfo=A^\top\bfo$ with $\bfo^\top A\bfo=1$. We write $\mathcal A(S)$ for the set of unit circulations on $S$.
\end{definition}

\begin{definition}[Cycle polytope]\label{def:cycle_polytope}
Let $S=(U,F)$ be a directed graph on $q$ nodes. Define
$$
\vec{\mathcal X}(S):=\{x\in\R^q:\ x=A\bfo\ \text{for some }A\in\mathcal A(S)\}.
$$
\end{definition}

The entry $a_{ij}$ is the {\em flow} on the edge $u_i\to u_j$. It is nonnegative and vanishes if $u_i\to u_j$ is not an edge of $S$, the total flow $\bfo^\top A\bfo$ equals $1$ (hence {\em unit}) and $A\bfo=A^\top\bfo$ is flow conservation (inflow equals outflow) at each node. The vector $A\bfo$ lists the total flow out of each node, which flow conservation makes equal to the total flow into it; since the total flow is $1$, it lies in $\Delta_q$. We call it the {\em type frequency vector} of $A$, a name justified by Proposition~\ref{prop:blowup_cover}: these are exactly the type frequency vectors of vertex sets that admit a cycle cover.

The cycle polytope admits an equivalent description as the convex hull of the type frequency vectors of directed cycles in $S$, which we now define for later use. For a cycle $C=(i_1\to i_2\to\cdots\to i_\ell\to i_1)$ in $S$ of length $\ell$ (not necessarily simple, i.e., a closed directed walk, with the convention $i_{\ell+1}:=i_1$), define its {\em cycle matrix} $A^{C}\in\R^{q\times q}$ and its {\em type frequency vector} $x^C\in\Delta_q$ as 
$$
A^{C}_{ij}:=\frac{1}{\ell}\,\#\{r\in[\ell]:(u_{i_r}\to u_{i_{r+1}})=(u_i\to u_j)\}\quad \mbox{ and }\quad  x^{C}:=A^{C}\bfo,
$$ 
respectively. Then
$$
\vec{\mathcal X}(S) = \conv \{ x^C \mid C \text{ is a directed cycle in } S\},
$$
and simple cycles already suffice, so $\vec{\mathcal X}(S)$ is the convex hull of finitely many points. This is Lemma~\ref{lem:cycle_polytope_conv_cycles}. 
Additional properties of $\vec{\mathcal X}(S)$ are developed in Section~\ref{sec:preliminaries}; Figure~\ref{fig:cycle_polytope_example} shows a worked example.

\begin{figure}[t]
\centering
\begin{tikzpicture}[
>=Latex,
every node/.style={font=\small},
tp/.style={circle, draw, minimum size=7mm, inner sep=0pt},
looped/.style={circle, draw, fill=green!12, minimum size=7mm, inner sep=0pt}
]
% ---- left: skeleton S ----
\begin{scope}[yshift=0.9cm]
\node[looped] (s1) at (0,0) {$1$};
\node[tp] (s2) at (1.6,0) {$2$};
\node[tp] (s3) at (3.2,0) {$3$};
\draw[->] (s1) edge[loop left] ();
\draw[->] (s1) to[bend left=18] (s2);
\draw[->] (s2) to[bend left=18] (s1);
\draw[->] (s2) to[bend left=18] (s3);
\draw[->] (s3) to[bend left=18] (s2);
\node[font=\footnotesize, align=center] at (1.6,-1.2)
{skeleton $S$: cycles\\ loop $1$,\ $1\!\leftrightarrow\!2$,\ $2\!\leftrightarrow\!3$};
\end{scope}
% ---- right: simplex + cycle polytope ----
\begin{scope}[xshift=6cm]
\coordinate (e1) at (0,0);
\coordinate (e2) at (4,0);
\coordinate (e3) at (2,3.46);
\fill[gray!8] (e1)--(e2)--(e3)--cycle;
\draw[gray!45] (e1)--(e2)--(e3)--cycle;
\node[below left] at (e1) {\footnotesize $e_1$};
\node[below right] at (e2) {\footnotesize $e_2$};
\node[above] at (e3) {\footnotesize $e_3$};
\coordinate (v1) at (0,0);
\coordinate (v2) at (2,0);
\coordinate (v3) at (3,1.73);
\fill[blue!16] (v1)--(v2)--(v3)--cycle;
\draw[blue!55,thick] (v1)--(v2)--(v3)--cycle;
\node[circle,fill=black,inner sep=1.2pt] at (v1) {};
\node[circle,fill=black,inner sep=1.2pt] at (v2) {};
\node[circle,fill=black,inner sep=1.2pt] at (v3) {};
\node[font=\scriptsize, below=1pt] at (v2) {$(\tfrac12,\tfrac12,0)$};
\node[font=\scriptsize, right=1pt] at (v3) {$(0,\tfrac12,\tfrac12)$};
\node[font=\scriptsize, above right] at (v1) {$(1,0,0)$};
\node[circle,fill=blue!70!black,inner sep=1.6pt,label={[font=\scriptsize]above:{$x^*$: $B^*$}}] at (1.667,0.577) {};
\node[circle,fill=red!75!black,inner sep=1.6pt,label={[font=\scriptsize]right:{$x^*$: $B$ only}}] at (2.5,0.865) {};
\end{scope}

% ---- step-graphons producing the two marked concentration vectors ----
\begin{scope}[yshift=-3.7cm, xshift=2.4cm]
\begin{scope}
\fill[Gray!70!black!55] (0,0.9)   rectangle (0.9,1.8);
\fill[Gray!40!black!30] (0.9,0.9) rectangle (1.5,1.8);
\fill[Gray!60!black!60] (0,0.3)   rectangle (0.9,0.9);
\fill[Gray!50!black!40] (1.5,0.3) rectangle (1.8,0.9);
\fill[Gray!70!black!75] (0.9,0)   rectangle (1.5,0.3);
\draw[gray!55,very thin] (0.9,0)--(0.9,1.8) (1.5,0)--(1.5,1.8) (0,0.9)--(1.8,0.9) (0,0.3)--(1.8,0.3);
\draw[black,thick] (0,0) rectangle (1.8,1.8);
\node[font=\tiny,above=0pt] at (0.45,1.8) {$1$};
\node[font=\tiny,above=0pt] at (1.2,1.8)  {$2$};
\node[font=\tiny,above=0pt] at (1.65,1.8) {$3$};
\node[font=\scriptsize,below=1pt,align=center] at (0.9,0) {$x^*=(\tfrac12,\tfrac13,\tfrac16)$\\relative interior: $\mathrm{B}^*$};
\end{scope}
\begin{scope}[xshift=3.4cm]
\fill[Gray!70!black!55] (0,1.35)    rectangle (0.45,1.8);
\fill[Gray!40!black!30] (0.45,1.35) rectangle (1.35,1.8);
\fill[Gray!60!black!60] (0,0.45)    rectangle (0.45,1.35);
\fill[Gray!50!black!40] (1.35,0.45) rectangle (1.8,1.35);
\fill[Gray!70!black!75] (0.45,0)    rectangle (1.35,0.45);
\draw[gray!55,very thin] (0.45,0)--(0.45,1.8) (1.35,0)--(1.35,1.8) (0,1.35)--(1.8,1.35) (0,0.45)--(1.8,0.45);
\draw[black,thick] (0,0) rectangle (1.8,1.8);
\node[font=\tiny,above=0pt] at (0.225,1.8) {$1$};
\node[font=\tiny,above=0pt] at (0.9,1.8)   {$2$};
\node[font=\tiny,above=0pt] at (1.575,1.8) {$3$};
\node[font=\scriptsize,below=1pt,align=center] at (0.9,0) {$x^*=(\tfrac14,\tfrac12,\tfrac14)$\\boundary: B only};
\end{scope}
\end{scope}
\end{tikzpicture}
\caption{The cycle polytope of a graph. Left: a strongly connected graph $S$ on $q=3$ nodes (node $1$ looped, in green). Its simple directed cycles are the loop at $1$, the $2$-cycle $1\!\leftrightarrow\!2$, and the $2$-cycle $2\!\leftrightarrow\!3$, with $x^{C}=(1,0,0)$, $(\tfrac12,\tfrac12,0)$, and $(0,\tfrac12,\tfrac12)$. Right: the simplex $\Delta_3$ (gray) and the cycle polytope $\vec{\mathcal X}(S)=\conv\{x^{C}\}$ (blue), here $2$-dimensional, so Condition C holds (the conditions are stated in Section~\ref{ssec:conditions}). A concentration vector $x^*$ in the relative interior satisfies $\mathrm{B}^*$ (hence B); one on the polytope boundary but interior to the simplex satisfies  B but not $\mathrm{B}^*$. Bottom: two step-graphons with skeleton $S$ whose concentration vectors are the two marked points, drawn on $[0,1]^2$ with the block $(i,j)$ shaded according to $p_{ij}$ and white where $p_{ij}=0$. The widths of the three strips are the entries of $x^*$; the pattern of shaded blocks is the same in both, since it records only the skeleton $S$, and it is the strip widths  that move $x^*$ from the relative interior to the boundary.
}
\label{fig:cycle_polytope_example}
\end{figure}

\section{Conditions and main results}\label{sec:conditions}

We now state the structural conditions on $W$ and the main theorems they yield. From this point onward, we assume that the skeleton $S=(U,F)$ is strongly connected.

\subsection{Conditions A--D}\label{ssec:conditions}

\begin{description}
\item[(A)] Positive loop density: $\Lambda:=\sum_{i=1}^q x_i^*p_{ii}>0$.
\item[(B)] Feasibility: $x^*\in\vec{\mathcal X}(S)$.
\item[($\mathbf{B}^*$)] Strict feasibility: $x^*\in\rint\vec{\mathcal X}(S)$.
\item[(C)] Full dimension: $\dim\vec{\mathcal X}(S)=q-1$.
\item[(D)] Cycle ordering: there exists an ordering $t_1,\dots,t_q$ of the nodes of $U$, that is, a relabelling of $u_1,\dots,u_q$, such that, writing $T_r:=\{t_1,\dots,t_r\}$, every node of $S[T_r]$ lies on a directed cycle of $S[T_r]$, for every $r=1,\dots,q$. 
\end{description}

\begin{remark}\label{rem:DimpliesCA}
Condition D implies both A and C. The case $r=1$ gives a loop at $t_1$, and every coordinate of $x^*$ is positive by Definition~\ref{def:dir_stepgraphon}, so $\Lambda\geq x^*_{t_1}p_{t_1t_1}>0$, which is A. Taking $r=q$ in Proposition~\ref{prop:realizable_supports} gives $\rint_{H_U}(F_U)\neq\varnothing$, that is, $\dim\vec{\mathcal X}(S)=q-1$, which is C.
\end{remark}

Conditions B and $\mathrm{B}^*$ play different roles, as was already observed in~\cite{belabbas2021h}: B is necessary for $\Pr(\mathcal N)\to 1$, but the matching sufficient condition is the strict-feasibility version $\mathrm{B}^*$. The boundary gap between the two, i.e., the case $x^*\in\vec{\mathcal X}(S)\setminus\rint\vec{\mathcal X}(S)$, is where the asymptotic probability of $\mathcal N$ is bounded strictly below $1$, in fact at most $1/2$ in the $\limsup$ (Proposition~\ref{prop:rint_necessary_prob1}). Figure~\ref{fig:cycle_polytope_example} (right panel) illustrates the distinction. 
%A set $T$ of types is the support of a point of $\vec{\mathcal X}(S)$ exactly when every node of $S[T]$ lies on a directed cycle of $S[T]$ (Proposition~\ref{prop:realizable_supports}); Condition D therefore asks for an ordering in which each new type lies on a cycle through the types already taken. 
Finally, see Figure~\ref{fig:Dadm_examples} for an illustration of Condition D.

Finally, we note that Condition D can be checked greedily. Indeed,  start from a looped node and repeatedly adding any node that lies on a directed cycle of the subgraph of $S$ induced by that node and the nodes already chosen. A node that can be added remains so when additional nodes are chosen, since the cycle through still persists. Consequently, if some ordering $t_1,\dots,t_q$ satisfies D, the procedure outlined above cannot stop before reaching all of $U$. Thus D holds if and only if the greedy procedure reaches all of $U$.

\begin{figure}[t]
\centering
\begin{tikzpicture}[
>=Latex,
node distance=2.2cm,
every node/.style={font=\small},
type/.style={circle, draw, minimum size=7mm, inner sep=0pt},
looped/.style={type, fill=green!12},
note/.style={align=left, font=\small}
]

\begin{scope}[xshift=0cm]
\node[looped] (a1) at (0,0) {$1$};
\node[type] (a2) at (2.2,0) {$2$};
\node[type] (a3) at (1.1,1.9) {$3$};

\draw[->, thick] (a1) edge[loop below] ();
\draw[->, thick] (a1) edge[bend left=10] (a2);
\draw[->, thick] (a2) edge[bend left=10] (a1);
\draw[->, thick] (a2) -- (a3);
\draw[->, thick] (a3) -- (a1);

\node[note, anchor=north] at (1.1,-1.0)
{\textbf{Satisfies $D$}\\
Ordering: $t_1=1,\ t_2=2,\ t_3=3$\\[1mm]
Cycles certifying each prefix:\\
$T_1=\{1\}$: loop at $1$\\
$T_2=\{1,2\}$: $1\leftrightarrow 2$\\
$T_3=\{1,2,3\}$: $1\to2\to3\to1$};
\end{scope}

\begin{scope}[xshift=8.2cm]
\node[looped] (b1) at (0,0) {$1$};
\node[type] (b2) at (2.2,0) {$2$};
\node[type] (b3) at (1.1,1.9) {$3$};

\draw[->, thick] (b1) edge[loop below] ();
\draw[->, thick] (b1) -- (b2);
\draw[->, thick] (b2) -- (b3);
\draw[->, thick] (b3) -- (b1);

\node[note, anchor=north] at (1.1,-1.0)
{\textbf{Fails $D$}\\
Strongly connected, but $L=\{1\}$, so $t_1=1$ is forced.\\[1mm]
No two-node prefix works:\\
$\{1,2\}$ fails since $2$ has no outgoing edge inside $\{1,2\}$;\\
$\{1,3\}$ fails since $3$ has no incoming edge inside $\{1,3\}$.\\
Thus no ordering $t_1,t_2,t_3$ can satisfy $D$.};
\end{scope}

\end{tikzpicture}
\caption{Illustration of the cycle-ordering Condition D for strongly connected skeleton digraphs.}
\label{fig:Dadm_examples}
\end{figure}

\subsection{Main theorems}\label{ssec:main_thms}

\begin{theorem}[Property $\mathcal N$]\label{thm:maincondN}
Let $W$ be a directed step-graphon with strongly connected skeleton $S$ and concentration vector $x^*$. Let $D_n\sim W$. If any of Conditions A, B, C is {\em not satisfied}, then $\lim_{n\to\infty}\Pr(\mathcal N)=0$. Conversely, if Conditions A, $\mathrm{B}^*$ and C are satisfied, then there exist constants $C_W,c_W>0$, depending only on $W$, such that $\Pr(\mathcal N)\ge1-C_We^{-c_Wn}$ for all $n$; in particular, $\lim_{n\to\infty}\Pr(\mathcal N)=1$.
\end{theorem}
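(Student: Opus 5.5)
The proof splits into necessity and sufficiency, and the ingredients are the ones previewed in the proof overview.

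\emph{Necessity.} Assume first that A fails. Then $p_{ii}=0$ for every $i$ with $x_i^*>0$, and since $x^*$ has full support this means $p_{ii}=0$ for all $i$. So $D_n$ has no loops, there is no $1$-cycle cover, and $\Pr(\mathcal N)=0$.

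Assume next that B fails. The case $k=n$ of $\mathcal N$ asks for a cycle cover of $D_n$ itself. By Lemma~\ref{lem:cover_implies_Xdir}, such a cover forces $x(V)\in\vec{\mathcal X}(S)$. Since $\vec{\mathcal X}(S)$ is closed and $x^*\notin\vec{\mathcal X}(S)$, there is an $\varepsilon>0$ with $\mathrm B_\infty(x^*,\varepsilon)\cap\vec{\mathcal X}(S)=\varnothing$. The multinomial concentration of Lemma~\ref{lem:multinomial_conc} gives $\Pr(\|x-x^*\|_\infty>\varepsilon)\to 0$, so $\Pr(\mathcal N)\to0$.

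Assume finally that C fails. Then $\vec{\mathcal X}(S)$ lies in a proper affine subspace of $\{\bfo^\top x=1\}$. Hence there is a vector $c$, not a multiple of $\bfo$, and a constant $b$ with $c^\top x=b$ on the whole polytope. Again the $k=n$ cover forces $c^\top (N_1,\dots,N_q)=bn$. This is an equality constraint on a multinomial vector in a direction transverse to $\bfo$, so by Lemma~\ref{lem:anti_conc} it holds with probability $O(n^{-1/2})$.

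\emph{Sufficiency.} Assume A, $\mathrm B^*$ and C. By C the polytope is full-dimensional in $\Delta_q$, so $\mathrm B^*$ puts $x^*$ in its interior relative to the simplex. Fix $\rho>0$ such that $K:=\mathrm B_\infty(x^*,2\rho)\cap\{\bfo^\top x=1\}$ lies in $\rint\vec{\mathcal X}(S)$. By Lemma~\ref{lem:hall_Xdir} and compactness, every nontrivial Hall functional satisfies $\Phi_I\ge\delta$ on $K$ for some $\delta>0$. The argument then has three steps.

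\begin{enumerate}
\item \emph{Typical event.} By Chernoff and multinomial bounds, with probability $1-e^{-cn}$ we have $\|x-x^*\|_\infty\le\rho$, and the number of looped vertices is at least $\Lambda n/2$, where looped vertices are sampled independently with probability $p_{ii}$ given the types.
\item \emph{Small $k$.} For $k\le\Lambda n/2$, take any $k$ looped vertices; their loops form a $k$-cycle cover.
\item \emph{Large $k$.} Order $V$ by the balanced ordering of Lemma~\ref{lem:balanced_prefix_rounding_aux}, so that every prefix $V_k$ satisfies $\|x(V_k)-x(V)\|_\infty=O(1/k)$. For $k\ge\eta n$, with $\eta\le\Lambda/2$ fixed, every prefix then has $x(V_k)\in K$.
\end{enumerate}

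Steps 2 and 3 together give $\mathcal N$, provided step 3 can produce a cycle cover of each large prefix. For that, apply Lemma~\ref{lem:uniform_chain_realization} (equivalently Lemma~\ref{lem:dense_block_matching_combined}) to the nested family $\{V_k\}_{k\ge\eta n}$. Conditioned on the types, the edges are independent, so the lemma gives that all bipartite lifts $B(D_n[V_k])$ have perfect matchings simultaneously, with failure probability $e^{-cn}$. Each such perfect matching is a $k$-cycle cover.

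\emph{Main obstacle.} The delicate point is getting an exponential bound $1-C_We^{-c_Wn}$ rather than merely $1-o(1)$. This needs a union bound over roughly $n$ prefixes and over all potential Hall obstructions $X$ in each lift. The margin $\delta$ makes every obstruction of linear size exponentially unlikely, since an obstruction at the type level would require $\Phi_I<0$. Obstructions of small size are excluded because $p_{\min}>0$ and each type class has linear size, so degrees are linear. I expect the cited lemma already packages this estimate; if it yields only $1-o(1)$, I would redo the count, bounding obstructions of size $s$ by $\binom{k}{s}^2(1-p_{\min})^{\Omega(sk)}$ for small $s$, and using the margin for $s\ge\epsilon k$.
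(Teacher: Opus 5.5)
Your proposal is correct and follows the paper's proof essentially step for step. For necessity, you use the absence of loops when $\Lambda=0$, Lemma~\ref{lem:cover_implies_Xdir} plus concentration when B fails, and Lemma~\ref{lem:anti_conc} when C fails (the empty-polytope case, e.g.\ $q=1$, is absorbed into the failure of B as in the paper); for sufficiency, you use loops for $k\le\Lambda n/2$ and a balanced ordering fed into Lemma~\ref{lem:uniform_chain_realization} for larger $k$, and that lemma, via Lemma~\ref{lem:dense_block_matching_combined}, already gives the exponential bound, so the fallback count in your last paragraph is not needed.
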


\begin{theorem}[Property $\mathcal S$]\label{thm:maincondS}
Let $W$ be a directed step-graphon with strongly connected skeleton $S$ and concentration vector $x^*$. Let $D_n\sim W$. If Condition B or Condition D fails, then $\lim_{n\to\infty}\Pr(\mathcal S)=0$. Conversely, if Conditions $\mathrm{B}^*$ and D hold, then there exist constants $C_W,c_W>0$, depending only on $W$, such that $\Pr(\mathcal S)\ge1-C_We^{-c_Wn}$ for all $n$; in particular, $\lim_{n\to\infty}\Pr(\mathcal S)=1$.
\end{theorem}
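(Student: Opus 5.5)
The proof splits into a necessity half and a sufficiency half. Throughout, Remark~\ref{rem:DimpliesCA} lets us use A and C whenever D holds, and we use $\mathcal S\Rightarrow\mathcal N$.

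\emph{Necessity.} If B fails, then $\Pr(\mathcal N)\to0$ by Theorem~\ref{thm:maincondN}, and hence $\Pr(\mathcal S)\le\Pr(\mathcal N)\to0$. So suppose B holds and D fails.

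1. The greedy procedure described after Remark~\ref{rem:DimpliesCA} then stops at a set $T\subsetneq U$ of types. This covers the case where no looped node exists, with $T=\varnothing$.
2. On $\mathcal S$, let $J_1\subset\cdots\subset J_n$ be the nested sequence. Let $k$ be the first index at which $J_k$ contains a vertex $v$ whose type lies outside $T$.
3. The cycle cover of $D[J_k]$ contains a cycle through $v$. All other vertices of $J_k$ have types in $T$, so projecting that cycle by $\pi$ gives a closed walk in $S[T\cup\{\pi(v)\}]$ through $\pi(v)$.
4. Such a closed walk puts $\pi(v)$ on a directed cycle of $S[T\cup\{\pi(v)\}]$, so the greedy procedure could have added $\pi(v)$. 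This contradicts maximality of $T$.
5. If $T=\varnothing$, then $J_1$ cannot be a looped vertex at all.
6. Hence on $\mathcal S$ every vertex of $V=J_n$ has type in $T$. This requires $N_i=0$ for some $i\notin T$, an event of probability at most $(1-x_i^*)^n\to0$ since $x^*$ has full support.

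Thus $\Pr(\mathcal S)\to0$. This half is purely combinatorial and deterministic apart from that final bound.

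\emph{Sufficiency.} Assume $\mathrm{B}^*$ and D, so A and C hold as well. I would follow the strategy announced in Section~\ref{ssec:intro_proofs}.

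1. From D, Lemma~\ref{lem:integer_path} and Proposition~\ref{prop:blowup_cover} produce the finite digraph $H$ of Section~\ref{sec:chains}. It is a blow-up of a subgraph of $S$ with vertex ordering $h_1,\dots,h_m$; its first vertex is looped and every prefix admits a cycle cover. The final prefix has normalized type vector in $\rint\vec{\mathcal X}(S)$.
2. Split $D_n$ into two independent layers $D^{(1)}\cup D^{(2)}$ whose union has the law of $D_n$. Edges of type $(i,j)$ appear with probabilities $p'_{ij}$ and $p''_{ij}$ satisfying $1-p_{ij}=(1-p'_{ij})(1-p''_{ij})$.
3. Condition on the types, which concentrate by Lemma~\ref{lem:multinomial_conc}. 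With probability $1-e^{-cn}$, the first layer contains a copy of $H$. One route is to find $\Theta(n)$ vertex-disjoint type-respecting copies of $H$, each present independently with probability bounded below by a constant; Chernoff then makes the probability that none is present exponentially small. Only one copy is needed, so a simpler route is to take disjoint candidate tuples and use independence.
4. Let $J_1,\dots,J_m$ be the prefixes of this copy. Append the remaining vertices in a balanced order (Lemma~\ref{lem:balanced_prefix_rounding_aux}).
5. The balanced order should keep $x(J_k)$ for $k\ge m$ inside a fixed compact subset of $\rint\vec{\mathcal X}(S)$. The type vector of the $H$-copy lies in the relative interior, and so does $x^*$ by $\mathrm{B}^*$. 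Adding vertices of the other types so that each prefix interpolates between these two points, the prefixes stay in a compact subset of the relative interior, by convexity.
6. Apply Lemma~\ref{lem:uniform_chain_realization}, using the second layer on this nested family. This gives cycle covers of all $D[J_k]$ with $k\ge k_0$ simultaneously, with exponentially small failure probability.

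\emph{Main obstacle.} The delicate point is the intermediate range $m<k<k_0$, and the fact that Lemma~\ref{lem:uniform_chain_realization} gives covers only for sufficiently large members of the family. I would choose $k_0$ as a fixed fraction of $n$ and handle the gap in one of two ways.

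1. \emph{Appending blocks.} Using D and A, insert blocks of looped vertices and further disjoint $H$-type gadgets. Each such block supplies its own cycle cover, so covers of the prefix and of the new block combine, since a disjoint union of cycle covers is a cycle cover. Found in the first layer, this carries the chain from size $m$ up to $\epsilon n$ with type vectors staying near a relative-interior point.
2. \emph{Fallback.} Replace the lemma by a direct union bound over the Hall obstructions of Lemma~\ref{lem:hall_Xdir} on prefixes of linear size.

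Checking that the constant margin $\Phi_I\ge\delta$ survives this stitching, while the block structure respects the skeleton and the balanced ordering, is where I expect most of the work.
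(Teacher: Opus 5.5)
Your necessity half is correct, and it argues a little differently from the paper. Proposition~\ref{prop:Dadm_necessary} fixes a realization in which $\mathcal S$ holds and all types occur, and orders the types by first appearance along $J_1\subset\cdots\subset J_n$. It then uses Lemma~\ref{lem:cover_implies_Xdir} and Proposition~\ref{prop:realizable_supports}(i) to show that each first-appearance support is the support of a point of $\vec{\mathcal X}(S)$, which yields a D-ordering directly. Your contrapositive uses the greedy closure $T$, which contains every looped node, and projects the cycle through the first vertex of a new type. It reaches the same conclusion without passing through the polytope.

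The sufficiency half follows the paper's architecture, but the route you call simpler, a single copy of $H$, fails, and not only on the range $m<k<k_0$. The $k_0$ of Lemma~\ref{lem:uniform_chain_realization} is a constant. What matters is the index $M$ at which the second-layer part of the chain starts, because the lemma's failure bound is $Ce^{-cM}$. If $M$ is bounded, this bound is a constant, so you get neither $\Pr(\mathcal S)\to1$ nor an exponential rate. In addition, the balanced-ordering error $C_{\mathrm{bal}}(q)/(m+s)$ is of order one for small $s$, so the prefixes just after a single copy need not lie in $K$ at all.

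The repair is your option (1) with copies of $H$ only, combined with the Chernoff count from your first route: you need $\Theta(n)$ realized copies, not one. This is exactly Lemma~\ref{lem:log_seed_from_finite_seed_path}.
\begin{enumerate}
\item Choose $\nu=\Theta(n)$ disjoint type-respecting $Q$-tuples as a function of $\pi$ alone. Each realizes $H$ in the first layer independently with probability $p_{\mathrm{temp}}>0$, so by Chernoff at least $b=\lfloor\beta n\rfloor$ of them do, with probability $1-e^{-cn}$.
\item List these copies one after another, each in the label order of $H$. A prefix ending inside a copy is covered by the covers of the earlier complete copies together with a cover of $H[r]$. For $M=Qb$, which is linear in $n$, one has $x(J_M)=y^\star$ exactly.
\item The balanced continuation keeps $x(J_{M+s})$ within $C_{\mathrm{bal}}(q)/M=O(1/n)$ of the segment $[y^\star,x]$. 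Hence it stays in a compact neighbourhood of $[y^\star,x^*]$ inside $\rint\vec{\mathcal X}(S)$, and Lemma~\ref{lem:uniform_chain_realization} applies on the second layer with exponentially small failure $Ce^{-cM}$.
\end{enumerate}

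No Hall margin is needed on the first-layer prefixes, since their covers come from the realized copies of $H$. So your concern about $\Phi_I\ge\delta$ surviving the stitching does not arise. Leave out the blocks of looped vertices: they move the type vector away from $y^\star$, possibly toward the boundary of $\vec{\mathcal X}(S)$, and they are not needed. Fallback (2) does not help either, since it says nothing about prefixes of bounded size.
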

%Condition D implies both A and C (Remark~\ref{rem:DimpliesCA}), so the criterion for $\mathcal S$ is feasibility together with the support chain, and listing A and C alongside them would be redundant; in particular the necessity half above is equivalent to saying that $\Pr(\mathcal S)\to0$ as soon as any one of A, B, C, D fails. 

Comparing the two theorems, the  difference between $\mathcal N$ and $\mathcal S$ is that $\mathcal N$ asks only, besides loops (Condition A), that the polytope contain $x^*$ and have full dimension, while $\mathcal S$ asks in addition that its support be reachable one node at a time, starting from a looped node and passing only through supports of points of the polytope (Condition D).

\begin{example}[$\mathcal N$ and $\mathcal S$ separate on directed step-graphons]\label{ex:NS_separation}
Let $q=4$ and let $S$ have the edges: a loop at $1$, $1\to2$, $4\to1$, and $2\leftrightarrow3$, $3\leftrightarrow4$, $2\leftrightarrow4$; thus $L=\{1\}$ and $S$ is strongly connected. The type frequency vectors $e_1$, $(0,\tfrac12,\tfrac12,0)$, $(0,0,\tfrac12,\tfrac12)$, $(0,\tfrac12,0,\tfrac12)$ of four of its simple cycles, namely the loop at $1$ and the $2$-cycles $2\leftrightarrow3$, $3\leftrightarrow4$ and $2\leftrightarrow4$, are affinely independent, so Condition C holds, and their average $(\tfrac14,\tfrac14,\tfrac14,\tfrac14)$ lies in $\rint\vec{\mathcal X}(S)$. Take any block probabilities supported on $S$ and $x^*=(\tfrac14,\tfrac14,\tfrac14,\tfrac14)$: Conditions A, $\mathrm{B}^*$, C hold. But Condition D fails: $t_1=1$ is forced, and no two-node prefix $\{1,i\}$ is the support of a point of $\vec{\mathcal X}(S)$, since none of $S[\{1,2\}]$, $S[\{1,3\}]$, $S[\{1,4\}]$ contains a cycle through $i$. By Theorems~\ref{thm:maincondN} and~\ref{thm:maincondS}, $\Pr(\mathcal N)\to1$ while $\Pr(\mathcal S)\to 0$.
\end{example}

\section{Preliminary results}\label{sec:preliminaries}

This section collects what we need to know about the cycle polytope, together with two facts about the sampled graph that are used throughout. Section~\ref{ssec:concentration_lemmas} records the concentration of the type frequency vector and of the loop count. Section~\ref{ssec:cycle_cover_lemmas} recalls Hall's theorem and the equivalence between cycle covers and perfect matchings of the bipartite lift. Section~\ref{ssec:cycle_polytope_lemmas} describes $\vec{\mathcal X}(S)$ in the two ways discussed above: as a convex hull of the type frequency vectors of cycles and as a system of linear inequalities, and identifies its integer points with cycle covers of blow-ups of the skeleton. Section~\ref{ssec:Dgeometric} elaborates the graph-theoretic and the geometric descriptions of Condition D. Section~\ref{ssec:steinitz_balancing} proves the balanced-ordering lemma, a {\em Steinitz} rearrangement that both sufficiency proofs use to add vertices.

\subsection{Concentration of the type frequency vector}\label{ssec:concentration_lemmas}

The two basic distributional facts about $D_n\sim W$ are that the type counts are multinomial, so that the type frequency vector concentrates near $x^*$, and that the number of looped vertices is binomial.

\begin{lemma}\label{lem:multinomial_conc}
Let $D_n\sim W$ and let $x$ be its type frequency vector. Then $n x=(N_1,\dots,N_q)\sim\mathrm{Mult}(n;x^*)$. Moreover, for every $\varepsilon>0$ there exists $c=c(\varepsilon,x^*)>0$ such that
$$
\Pr\bigl(\|x-x^*\|_2>\varepsilon\bigr)\ \le\ 2\blue{q}\,e^{-c n}.
$$
\end{lemma}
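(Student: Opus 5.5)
The first claim, $nx=(N_1,\dots,N_q)\sim\mathrm{Mult}(n;x^*)$, follows directly from the sampling procedure (Definition~\ref{def:sampling}) together with the type map (Definition~\ref{def:type_map}). The coordinates $y_1,\dots,y_n$ are i.i.d.\ uniform on $[0,1]$. The events $\{y_a\in[\sigma_{i-1},\sigma_i)\}$, $i\in[q]$, partition $[0,1)$ up to the null event $\{y_a=1\}$, and they have probabilities $\sigma_i-\sigma_{i-1}=x_i^*$. So the types $\pi(v_1),\dots,\pi(v_n)$ are i.i.d.\ categorical random variables with parameter $x^*$, and the vector of counts is multinomial by definition. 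Edge sampling is irrelevant here, since the counts depend only on the $y_a$'s.

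For the concentration bound, I will reduce to one coordinate at a time and use a union bound. Each marginal $N_i$ is a sum of $n$ i.i.d.\ Bernoulli$(x_i^*)$ indicators, so $N_i\sim\Bin(n,x_i^*)$. Hoeffding's inequality then gives, for every $t>0$,
$$
\Pr\bigl(|N_i/n-x_i^*|>t\bigr)\le 2e^{-2nt^2}.
$$
Since $\|x-x^*\|_2\le\sqrt q\,\|x-x^*\|_\infty$, the event $\{\|x-x^*\|_2>\varepsilon\}$ is contained in $\bigcup_i\{|x_i-x_i^*|>\varepsilon/\sqrt q\}$. The union bound over the $q$ coordinates yields
$$
\Pr\bigl(\|x-x^*\|_2>\varepsilon\bigr)\le 2q\,e^{-2n\varepsilon^2/q}.
$$
This is the claim with $c=2\varepsilon^2/q$, a constant depending only on $\varepsilon$ and on $q$, which is the dimension of $x^*$. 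In fact $c$ does not depend on the particular value of $x^*$ at all.

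I do not expect any real obstacle. The only points to check are that the half-open intervals together with the null event $\{y_a=1\}$ make the categorical description exact, and that the norm comparison supplies the factor $\sqrt q$. A Chernoff bound could replace Hoeffding's inequality and give a sharper constant when $x_i^*$ is small, but that refinement is not needed.
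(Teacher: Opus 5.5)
Your proof is correct and matches the paper's argument step for step: multinomial counts from i.i.d.\ categorical types, Hoeffding on each binomial marginal, a union bound over the $q$ coordinates, and the comparison $\|x-x^*\|_2\le\sqrt q\,\|x-x^*\|_\infty$ with threshold $\varepsilon/\sqrt q$. The only difference is that you make the constant explicit as $c=2\varepsilon^2/q$, whereas the paper leaves it as $\min_i c_i(\delta,x_i^*)$.
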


\begin{proof}
Since the $y_a$ are i.i.d.\ with $\Pr(y_a\in[\sigma_{i-1},\sigma_i))=x_i^*$, the count vector $(N_1,\dots,N_q)$ is $\mathrm{Mult}(n;x^*)$. Each marginal $N_i\sim\Bin(n,x_i^*)$ obeys Hoeffding's bound $\Pr(|N_i-nx_i^*|>\delta n)\leq 2e^{-c_i n}$ with $c_i=c_i(\delta,x_i^*)>0$. A union bound over $i$ together with $\|x-x^*\|_2\le\sqrt q\,\|x-x^*\|_\infty$ gives the claim with $\delta:=\varepsilon/\sqrt q$ and $c:=\min_i c_i$.
\end{proof}

\begin{lemma}[Loop count]\label{lem:binomial_loops}
Let $D_n\sim W$ where $W$ is a directed step-graphon with concentration vector $x^*$ and block probabilities $p_{ij}$. Recall $\Lambda:=\sum_{i=1}^q x_i^*\,p_{ii}$. Let $L_n$ be the number of looped vertices in $D_n$. Then $L_n\sim\Bin(n,\Lambda)$.
\end{lemma}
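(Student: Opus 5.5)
The plan is to write $L_n$ as a sum of $n$ independent Bernoulli indicators, one per vertex, each with success probability exactly $\Lambda$. For each vertex $v_a$, let $\xi_a$ be the indicator that the loop $v_a\to v_a$ is present. By Definition~\ref{def:sampling}, the loop is included with probability $W(y_a,y_a)$, and this choice is made independently of every other edge decision.

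The first step is to compute the marginal law of each $\xi_a$ by conditioning on $y_a$. Given $y_a\in[\sigma_{i-1},\sigma_i)$, that is, given $\pi(v_a)=u_i$, we have $W(y_a,y_a)=p_{ii}$, because the diagonal point $(y_a,y_a)$ lies in the block $[\sigma_{i-1},\sigma_i)^2$. Integrating over $y_a\sim\mathrm{Unif}[0,1]$ then gives
$$
\Pr(\xi_a=1)=\int_0^1 W(y,y)\,dy=\sum_{i=1}^q x_i^*p_{ii}=\Lambda.
$$

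The second step is independence across vertices. The pair $(y_a,\xi_a)$ is generated from $y_a$ together with its own independent coin for the loop. The $y_a$ are i.i.d., and the loop coins are independent of one another and of the $y$'s, so the pairs $(y_a,\xi_a)$, $a=1,\dots,n$, are i.i.d. In particular the $\xi_a$ are i.i.d.\ $\mathrm{Bernoulli}(\Lambda)$, and therefore $L_n=\sum_a\xi_a\sim\Bin(n,\Lambda)$.

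The only step that needs care is this independence claim. Conditionally on the types, the $\xi_a$ are not identically distributed, since $\xi_a$ has parameter $p_{ii}$ for a type-$i$ vertex. What makes them i.i.d.\ is that they are identically distributed once the types are integrated out, and this holds because each $\xi_a$ depends only on $y_a$ and its own coin.
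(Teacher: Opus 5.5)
Your proposal is correct and follows essentially the same route as the paper. You compute $\Pr(v_a\to v_a)=\E[W(y_a,y_a)]=\Lambda$ by conditioning on the type, then use that the pairs $(y_a,\mathbf 1\{v_a\to v_a\})$ are i.i.d.\ to conclude that $L_n\sim\Bin(n,\Lambda)$.
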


\begin{proof}
For each vertex $v_a$, the loop indicator $\mathbf 1\{v_a\to v_a\}$ is Bernoulli with parameter
$$
\Pr(v_a\to v_a)=\E[W(y_a,y_a)]
=\sum_{i=1}^q \Pr(y_a\in[\sigma_{i-1},\sigma_i))\,p_{ii}
=\sum_{i=1}^q x_i^*p_{ii}
=\Lambda.
$$
Moreover, the pairs $(y_a,\mathbf 1\{v_a\to v_a\})$, $a\in[n]$, are i.i.d.\ (the coordinates are i.i.d.\ and loops are sampled independently across vertices by Definition~\ref{def:sampling}), so the loop indicators are i.i.d.\ Bernoulli($\Lambda$) and $L_n\sim\Bin(n,\Lambda)$.
\end{proof}

\subsection{Cycle covers as bipartite matchings}\label{ssec:cycle_cover_lemmas}

It is well known that  cycle covers in $D$ are in one-to-one correspondence with perfect matchings of the bipartite lift; see Figure~\ref{fig:cyclecover_matching} for an illustration:

\begin{lemma}[Cycle cover $\equiv$ perfect matching]\label{lem:cyclecover_matching}
Let $D=(V,\vec E)$ be a directed graph and $V'\subseteq V$. Then $D[V']$ admits a cycle cover if and only if $B(D[V'])$ admits a perfect matching.
\end{lemma}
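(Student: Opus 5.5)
The plan is to build an explicit bijection between cycle covers of $D[V']$ and perfect matchings of $B(D[V'])$. Both objects are subsets of the same edge set: a subset $\vec F\subseteq\vec E(D[V'])$ corresponds to the subset $\{v^{\mathrm{out}}w^{\mathrm{in}}: v\to w\in\vec F\}$ of $E(B(D[V']))$. By Definition~\ref{def:bipartite_rep} this correspondence is a bijection between edge sets, and it sends a loop $v\to v$ to the edge $v^{\mathrm{out}}v^{\mathrm{in}}$. Once this is fixed, it only remains to check that the degree conditions match.

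First, suppose $\vec F$ is a cycle cover, so every vertex of $(V',\vec F)$ has outdegree $1$ and indegree $1$. In the image, the degree of $v^{\mathrm{out}}$ equals the outdegree of $v$ in $\vec F$, and the degree of $v^{\mathrm{in}}$ equals the indegree of $v$. Hence every vertex of $L\cup R$ has degree exactly $1$, and the image is a perfect matching. Conversely, take a perfect matching $M$ and let $\vec F$ be its preimage. Each $v^{\mathrm{out}}$ lies in exactly one edge of $M$, so each $v$ has outdegree $1$ in $\vec F$. Likewise each $v^{\mathrm{in}}$ lies in exactly one edge, so each $v$ has indegree $1$. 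Thus $\vec F$ satisfies Definition~\ref{def:cycle_cover}.

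Optionally, I would also record the equivalent permutation view. A perfect matching defines a bijection $\sigma:V'\to V'$ with $v\to\sigma(v)\in\vec E$. The orbits of $\sigma$ are vertex-disjoint directed cycles, with fixed points giving loops. This recovers the "partition into cycles" reading of a cycle cover.

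There is no real obstacle here. The only points needing care are that loops are handled correctly, since a loop is simply the edge $v^{\mathrm{out}}v^{\mathrm{in}}$, and that the edges used lie inside the induced subgraph $D[V']$. The latter holds because $B$ is applied to $D[V']$ itself.
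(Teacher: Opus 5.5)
Your proof is correct and is essentially the paper's argument: both rest on the correspondence $v\to w\leftrightarrow v^{\mathrm{out}}w^{\mathrm{in}}$. The paper phrases the degree conditions through a permutation $\varphi$ of $V'$, whereas you check outdegree and indegree one directly against Definition~\ref{def:cycle_cover}, which matches the definition as stated and handles loops correctly.
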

The result is standard; we include a short proof for completeness.
\begin{proof}
A cycle cover of $D[V']$ is the same as a permutation $\varphi:V'\to V'$ such that $v\to\varphi(v)$ is an edge of $D[V']$ for every $v\in V'$. The corresponding edges $v^{\mathrm{out}}\varphi(v)^{\mathrm{in}}$ form a perfect matching of $B(D[V'])$. Conversely, a perfect matching of $B(D[V'])$ matches each left copy $v^{\mathrm{out}}$ to a unique right copy $\varphi(v)^{\mathrm{in}}$ and matches every right copy exactly once; thus $\varphi$ is a permutation of $V'$, and the directed edges $v\to\varphi(v)$ form a cycle cover of $D[V']$.
\end{proof}

\begin{figure}[t]
\centering
\begin{tikzpicture}[
>=Latex,
every node/.style={font=\small},
dv/.style={circle, draw, fill=black!5, minimum size=6mm, inner sep=0pt},
cov/.style={very thick, blue!70!black},
extra/.style={gray!55}
]
% ---- left: digraph D with a cycle cover ----
\begin{scope}
\node[dv] (1) at (0,1.7) {$1$};
\node[dv] (2) at (2,1.7) {$2$};
\node[dv] (3) at (0,-0.3) {$3$};
\node[dv] (4) at (1,-1.7) {$4$};
\node[dv] (5) at (2,-0.3) {$5$};
\draw[->,cov] (1) to[bend left=15] (2);
\draw[->,cov] (2) to[bend left=15] (1);
\draw[->,cov] (3) -- (4);
\draw[->,cov] (4) -- (5);
\draw[->,cov] (5) -- (3);
\draw[->,extra] (1) -- (5);
\draw[->,extra] (3) to[bend left=12] (2);
\node[font=\footnotesize] at (1,-2.6) {digraph $D$; cover in blue};
\end{scope}
% ---- right: bipartite lift B(D) with perfect matching ----
\begin{scope}[xshift=6.6cm]
\foreach \i in {1,...,5} {
\node[circle,fill=black,inner sep=1.5pt,label=left:{\footnotesize $\i^{\mathrm{out}}$}] (L\i) at (0, 2.5-0.9*\i) {};
\node[circle,fill=black,inner sep=1.5pt,label=right:{\footnotesize $\i^{\mathrm{in}}$}] (R\i) at (3, 2.5-0.9*\i) {};
}

\draw[cov] (L1) -- (R2);
\draw[cov] (L2) -- (R1);
\draw[cov] (L3) -- (R4);
\draw[cov] (L4) -- (R5);
\draw[cov] (L5) -- (R3);
\draw[extra] (L1) -- (R5);
\draw[extra] (L3) -- (R2);
\node[font=\footnotesize] at (1.5,-2.6) {bipartite lift $B(D)$; matching in blue};
\end{scope}
\end{tikzpicture}
\caption{The reduction of Lemma~\ref{lem:cyclecover_matching}. Left: a digraph $D$ and a cycle cover---a disjoint union of directed cycles covering every vertex (here $1\!\leftrightarrow\!2$ and $3\!\to\!4\!\to\!5\!\to\!3$). Right: the bipartite lift $B(D)$, with a left copy $v^{\mathrm{out}}$ and a right copy $w^{\mathrm{in}}$ joined whenever $v\to w\in\vec E$. The cover corresponds edge-for-edge to a perfect matching (blue): the permutation $\varphi$ with $v\to\varphi(v)$ in the cover matches $v^{\mathrm{out}}$ to $\varphi(v)^{\mathrm{in}}$. Gray edges of $D$ are not used by this cover.}
\label{fig:cyclecover_matching}
\end{figure}

The following lemma, a standard refinement of Hall's theorem, says that when a perfect matching fails to exist, a relatively small obstruction can always be found. We use it repeatedly in Section~\ref{ssec:dense_block_matching}.

\begin{lemma}[Small Hall obstructions]\label{lem:small_hall_witness}
Let $G=(L\sqcup R,E)$ be a bipartite graph with $|L|=|R|=k$. If $G$ has no perfect matching, then there exists a subset $X\subseteq L$ or a subset $Y\subseteq R$ such that $|N(X)|\leq |X|-1 \quad\text{or}\quad |N(Y)|\leq |Y|-1,$ and moreover one may choose such a set of size at most $\lceil k/2\rceil$.
\end{lemma}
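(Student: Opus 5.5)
Starting from Hall's theorem, we obtain a violating set $X\subseteq L$ with $|N(X)|\le|X|-1$. If $|X|\le\lceil k/2\rceil$ we are done. Otherwise we pass to the complementary set on the right side. Set $Y:=R\setminus N(X)$. No vertex of $Y$ is adjacent to any vertex of $X$, so $N(Y)\subseteq L\setminus X$. This gives
$$
|N(Y)|\le k-|X|\le k-|N(X)|-1=|Y|-1,
$$
using $|Y|=k-|N(X)|$. So $Y$ is also an obstruction, and $|Y|=k-|N(X)|$.

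The key step is to control the sizes. Since $N(Y)\subseteq L\setminus X$ is all we need, we may first shrink $N(X)$-related slack by choosing $X$ well. Take $X$ to be a \emph{minimal} violating set, or better, take a violating $X$ with $|X|-|N(X)|$ maximal and then $|X|$ minimal. For general $X$ we only know $|Y|\ge k-|X|+1$, which can still be large. So instead we argue as follows. Among all obstructions on either side, pick one, say $Z$, of minimum cardinality. Suppose $|Z|>\lceil k/2\rceil$; by symmetry we may assume $Z=X\subseteq L$. Minimality forces $|N(X)|=|X|-1$ exactly. Otherwise, removing any vertex of $X$ would leave a smaller obstruction, since deleting a vertex does not increase $|N(X)|$.

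The complementary set $Y=R\setminus N(X)$ is then an obstruction of size $k-|X|+1$. Since $|X|\ge\lceil k/2\rceil+1$, we get $|Y|\le k-\lceil k/2\rceil\le\lceil k/2\rceil$. Also $|Y|<|X|$ because $|X|>\lceil k/2\rceil\ge (k+1)/2$. This contradicts the minimality of $Z$. Hence the minimum obstruction has size at most $\lceil k/2\rceil$, and it is the required $X$ or $Y$.

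The only delicate point is the minimality argument giving $|N(X)|=|X|-1$. It is not strictly needed: using $|N(X)|\le|X|-1$ directly yields $|Y|=k-|N(X)|\ge k-|X|+1$, which is the wrong direction for an upper bound. So the tightness step is the one to get right. It holds because removing a vertex $v$ from an obstruction $X$ with $|N(X)|\le|X|-2$ gives $|N(X\setminus v)|\le|N(X)|\le|X\setminus v|-1$, which is still an obstruction and smaller. Here $|X|\ge2$ is automatic, since $|X|\ge|N(X)|+2\ge 2$. The edge case $N(X)=\varnothing$ with $|X|=1$ is covered directly, because then $|X|=1\le\lceil k/2\rceil$.
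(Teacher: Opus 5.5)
Your argument is correct and essentially the paper's: a cardinality-minimal obstruction $X$ satisfies $|N(X)|=|X|-1$, and if $|X|>\lceil k/2\rceil$ then $Y=R\setminus N(X)$ is a right obstruction of size $k-|X|+1\le\lfloor k/2\rfloor$. The paper minimizes only over left obstructions and checks $|Y|$ directly instead of arguing by contradiction over both sides; also, your justification $\lceil k/2\rceil\ge(k+1)/2$ is false for even $k$, though $|Y|<|X|$ still holds since $|Y|\le\lfloor k/2\rfloor<\lceil k/2\rceil+1\le|X|$.
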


\begin{proof}
Since $G$ has no perfect matching, Hall's theorem implies there exists a (left) obstruction $X\subseteq L$ with $|N(X)|\leq |X|-1$. Among all such sets, choose one with $|X|$ minimal.

\smallskip
\noindent{\em Claim:} For this minimal set, we have $|N(X)|=|X|-1$.
Indeed, if $|N(X)|\leq |X|-2$, then for any $x\in X$ we have $N(X\setminus\{x\})\subseteq N(X)$ and hence $|N(X\setminus\{x\})|\ \le\ |N(X)|\ \le\ |X|-2\ =\ |X\setminus\{x\}|-1,$ so $X\setminus\{x\}$ is also an obstruction, contradicting minimality of $|X|$.

\smallskip
If $|X|\leq \lceil k/2\rceil$ we are done. Otherwise $|X|\geq \lceil k/2\rceil+1$, and we define $Y:=R\setminus N(X).$ Using $|N(X)|=|X|-1$, we compute
$$
|Y|\ =\ k-|N(X)|\ =\ k-(|X|-1)\ =\ k-|X|+1\ \le\ \Bigl\lceil\frac{k}{2}\Bigr\rceil.
$$
Moreover, by construction, there are no edges from $X$ to $Y$; hence every neighbor of $Y$ lies in $L\setminus X$, i.e.\ $N(Y)\subseteq L\setminus X$, and therefore $|N(Y)|\ \le\ |L\setminus X|\ =\ k-|X|\ =\ |Y|-1.$ Thus $Y$ is a right obstruction of size at most $\lceil k/2\rceil$, completing the proof.
\end{proof}

\subsection{Polytope tools}\label{ssec:cycle_polytope_lemmas}

We now record the properties of $\vec{\mathcal X}(S)$ for later use: the two descriptions of the polytope announced earlier, and the combinatorial meaning of its {\em rational}  points.

We use the following consequence of strong connectivity repeatedly.

\begin{lemma}\label{lem:proper_expand}
Let $S=(U,F)$ be strongly connected and let $\varnothing\neq I\subsetneq U$. Then $N^+(I)\not\subseteq I$ and $N^-(I)\not\subseteq I$; in particular, $N^+(I)\neq I$ and $N^-(I)\neq I$.
\end{lemma}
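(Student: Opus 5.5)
The plan is a direct contradiction argument that uses strong connectivity once, via a path leaving $I$. Throughout, $\varnothing\neq I\subsetneq U$.

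\emph{Out-neighbors.} Since $I$ is a nonempty proper subset, pick nodes $u\in I$ and $w\in U\setminus I$. Strong connectivity gives a directed path $u=w_0\to w_1\to\cdots\to w_m=w$ in $S$. Let $s$ be the smallest index with $w_s\notin I$. It exists because $w_m\notin I$, and $s\geq1$ because $w_0\in I$. Then $w_{s-1}\in I$, and the edge $w_{s-1}\to w_s$ lies in $F$, so $w_s\in N^+(I)\setminus I$. Hence $N^+(I)\not\subseteq I$.

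\emph{In-neighbors.} Apply the same argument to a path $w\to\cdots\to u$ from a node outside $I$ to a node inside $I$, again given by strong connectivity. Now take the \emph{last} index $s$ with $w_s\notin I$. The edge $w_s\to w_{s+1}$ has its tail outside $I$ and its head in $I$, so $w_s\in N^-(I)\setminus I$. Equivalently, one can run the out-neighbor argument in the reversed digraph, which is also strongly connected.

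\emph{The ``in particular'' clause.} If $N^+(I)=I$, then in particular $N^+(I)\subseteq I$, which was just excluded; the same holds for $N^-$. So $N^+(I)\neq I$ and $N^-(I)\neq I$.

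This is routine and I expect no obstacle. The only care needed is with loops: a loop at a node of $I$ places that node in $N^\pm(I)$ but never produces a node outside $I$, so the path-based argument is what forces an element outside $I$.
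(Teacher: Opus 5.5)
Your proof is correct and follows essentially the same approach as the paper's. The paper argues that if $N^+(I)\subseteq I$ then no edge leaves $I$, so $U\setminus I$ is unreachable from $I$, contradicting strong connectivity, and it treats $N^-$ the same way; your first-exit and last-entry edges along a path are the explicit version of that reachability argument.
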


\begin{proof}
If $N^+(I)\subseteq I$, no edge of $S$ leaves $I$, so no vertex of the nonempty set $U\setminus I$ is reachable from $I$, contradicting strong connectivity. The case $N^-$ is proved similarly. \end{proof}

\begin{lemma}[Convex-hull description of $\vec{\mathcal X}(S)$]\label{lem:cycle_polytope_conv_cycles}
Let $S=(U,F)$ be a directed graph. The following hold:
\begin{enumerate}
\item[\textup{(i)}] $A^{C}\in\mathcal A(S)$ and $x^{C}\in\vec{\mathcal X}(S)$ for every directed cycle $C$.
\item[\textup{(ii)}] Every $A\in\mathcal A(S)$ can be written as a convex combination of cycle matrices: there exist directed cycles $C_1,\dots,C_M$ in $S$ and weights $\lambda_1,\dots,\lambda_M\geq 0$ with $\sum_{m=1}^M\lambda_m=1$ such that
$$
A\ =\ \sum_{m=1}^M \lambda_m A^{C_m}.
$$
Consequently, every $x\in\vec{\mathcal X}(S)$ is a convex combination of the type frequency vectors of cycles:
$$
\vec{\mathcal X}(S)\ =\ \conv\{\,x^{C}:\ C \text{ is a directed cycle in } S\,\}.
$$
Moreover, the same convex hull is obtained if one restricts to {\em simple} directed cycles.
\end{enumerate}
\end{lemma}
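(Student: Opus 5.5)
Part (i) is a direct verification from the definition of $A^C$. For a closed walk $C=(i_1\to\cdots\to i_\ell\to i_1)$, every step $u_{i_r}\to u_{i_{r+1}}$ is an edge of $F$, so $A^C_{ij}=0$ off $F$, and $A^C\ge 0$. Its total mass is $\bfo^\top A^C\bfo=\ell/\ell=1$. Row $i$ of $A^C$ sums to $1/\ell$ times the number of indices $r$ with $i_r=i$. Column $i$ sums to $1/\ell$ times the number of $r$ with $i_{r+1}=i$. These counts agree because $r\mapsto r+1$ is a bijection of $\Z/\ell$. Hence $A^C\in\mathcal A(S)$ and $x^C=A^C\bfo\in\vec{\mathcal X}(S)$.

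For (ii) I will use the standard cycle decomposition of circulations, by induction on the number of nonzero entries of $A$. Take $A\in\mathcal A(S)$ with support $\supp A\subseteq F$. Since $A\neq 0$, pick $a_{i_1i_2}>0$. Flow conservation at $i_2$ says the outflow $\sum_j a_{i_2j}$ equals the inflow, which is at least $a_{i_1i_2}>0$, so some $a_{i_2i_3}>0$. Continuing, and since $q$ is finite, the walk repeats a node, which yields a simple directed cycle $C$ of length $\ell$ whose edges all lie in $\supp A$.

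Let $\mu:=\min_{e\in C}a_e>0$ and set $A':=A-\mu\ell A^C$, noting that $\ell A^C$ is the $0/1$ indicator of the edges of $C$ because $C$ is simple. Then $A'\ge 0$, it conserves flow, it is supported in $F$, and it has strictly fewer nonzero entries. Its total mass is $1-\mu\ell\in[0,1)$. If $A'=0$ we stop. Otherwise $A'/(1-\mu\ell)\in\mathcal A(S)$, and induction writes it as a convex combination of simple-cycle matrices. Combining gives
$$
A=\mu\ell\,A^C+(1-\mu\ell)\sum_m\lambda_m A^{C_m},
$$
which is a convex combination, since the weights are nonnegative and sum to $1$. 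This argument produces only simple cycles, so the "moreover" clause comes for free.

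Applying the linear map $A\mapsto A\bfo$ gives $\vec{\mathcal X}(S)\subseteq\conv\{x^C: C\text{ simple}\}$. The reverse inclusion $\conv\{x^C: C\text{ a closed walk}\}\subseteq\vec{\mathcal X}(S)$ follows from (i) together with convexity of $\mathcal A(S)$, since it is defined by linear equalities and inequalities. Convexity passes to $\vec{\mathcal X}(S)$ as its linear image. This sandwiches all three sets, so they are equal.

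The only delicate point is the bookkeeping in the peeling step. One must check that the residual mass $1-\mu\ell$ is nonnegative, which holds because the cycle's entries sum to at most $\bfo^\top A\bfo=1$. One must also renormalize correctly so that the induction hypothesis applies to unit circulations. Loops cause no issue: they are cycles with $\ell=1$.
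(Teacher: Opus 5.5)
Your proof is correct and takes essentially the paper's route: you check (i) directly, peel off simple cycles weighted by their minimal flow to decompose a unit circulation, and conclude by sandwiching the convex hulls. The differences are cosmetic. You renormalize and induct on the support size, where the paper iterates and sums the weights at the end. You also obtain the simple-cycle clause directly from the peeling, where the paper separately decomposes closed walks into simple cycles.
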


\begin{proof}
(i) By construction, $A^{C}\geq 0$, it is supported only on edges of $C$, and each vertex on the cycle has exactly one unit of outflow and inflow, so $A^{C}\bfo=A^{C\top}\bfo$. Also $\bfo^\top A^{C}\bfo=\sum_{i,j}A^{C}_{ij}=1$. Thus $A^{C}\in\mathcal A(S)$ and $x^C=A^C\bfo\in\vec{\mathcal X}(S)$.

(ii) Fix $A\in\mathcal A(S)$ and view it as a unit circulation on $S$: each directed edge $e=(u_i\to u_j)$ has flow $a_{ij}$. Since $\bfo^\top A\bfo=1$, some entry is positive. Pick any edge with positive flow and follow a directed walk along positive-flow edges; flow conservation ensures that the walk can always be continued. Since $U$ is finite, the walk contains a simple directed cycle $C$ with positive flow on each of its edges.

Let $\gamma:=\min\{a_{ij}:\ (u_i\to u_j)\in C\}>0$ and let $\ell$ be the length of $C$. Then $B:=A-\gamma\ell\,A^{C}$ is still nonnegative, supported on $F$, and remains a circulation (flow conservation is preserved by subtracting a cycle circulation), with total flow $\bfo^\top B\bfo=1-\gamma\ell$. Iterating this cycle-peeling procedure finitely many times (at each step {\em at least one} positive entry is driven to $0$) yields
$$
A\ =\ \sum_{m=1}^M \alpha_m A^{C_m},
\qquad \alpha_m:=\gamma_m\ell_m>0,
\qquad \sum_{m=1}^M \alpha_m = \bfo^\top A\bfo = 1,
$$ which is the desired convex combination. Multiplying on the right by $\bfo$ gives $x=A\bfo=\sum_m \alpha_m x^{C_m}$, hence $\vec{\mathcal X}(S)\subseteq \conv\{x^C\}$. The reverse inclusion follows from (i) and convexity of $\vec{\mathcal X}(S)$.

Finally, any directed cycle decomposes into a multiset (i.e., possibly with repeats) of simple directed cycles, and the associated cycle matrix and type frequency vector are convex combinations of the simple-cycle ones. Thus restricting to simple cycles does not change the convex hull.
\end{proof}

The following characterization expresses membership in $\vec{\mathcal X}(S)$ as a finite system of linear inequalities indexed by subsets of $U$. %It is used in both directions: in necessity arguments to extract a tight linear obstruction when $x^*$ lies on the boundary, and in sufficiency arguments to obtain a uniform margin on a compact subset of the relative interior.

\begin{lemma}[Characterization of $\vec{\mathcal X}(S)$]\label{lem:hall_Xdir}
Assume that $S=(U,F)$ satisfies $N^+(U)=U$ and let $x\in\Delta_q$ (so $x\geq 0$ and $\bfo^\top x=1$). Then the following are equivalent:
\begin{enumerate}
\item[\textup{(i)}] $x\in \vec{\mathcal X}(S)$.
\item[\textup{(ii)}] There exists $A\in\mathcal A(S)$ such that $A\bfo=x$ (equivalently, $A\bfo=A^\top\bfo=x$ and $\bfo^\top A\bfo=1$).
\item[\textup{(iii)}] For every nonempty proper subset $\varnothing\neq I\subsetneq U$,
$$
x_I\ \le\ x_{N^+(I)}.
$$
\end{enumerate}
\end{lemma}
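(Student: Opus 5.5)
(i)$\Leftrightarrow$(ii) is immediate from Definitions~\ref{def:A_S} and~\ref{def:cycle_polytope}: $x\in\vec{\mathcal X}(S)$ means precisely that $x=A\bfo$ for some $A\in\mathcal A(S)$. For such $A$, flow conservation gives $A^\top\bfo=A\bfo=x$, and $\bfo^\top A\bfo=1$ holds by definition. So the real content is the equivalence of (ii) and (iii), which I would treat as a feasibility statement for a transportation problem and reduce to Hall's theorem via a max-flow/min-cut argument.

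\emph{(ii)$\Rightarrow$(iii).} Let $A\in\mathcal A(S)$ with $A\bfo=A^\top\bfo=x$, and fix $I$. The outflow from $I$ is $\sum_{i\in I}\sum_j a_{ij}=x_I$. Every edge carrying positive flow out of $I$ ends in $N^+(I)$. Therefore
$$
x_I=\sum_{i\in I,\ j\in N^+(I)}a_{ij}\le\sum_{i\in U,\ j\in N^+(I)}a_{ij}=x_{N^+(I)},
$$
where the last equality uses that the column sums of $A$ equal $x$.

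\emph{(iii)$\Rightarrow$(ii).} Build a network with a source $s$, left copies $u_i^{\mathrm{out}}$, right copies $u_j^{\mathrm{in}}$, and a sink $t$. Put capacity $x_i$ on $s\to u_i^{\mathrm{out}}$, capacity $x_j$ on $u_j^{\mathrm{in}}\to t$, and infinite capacity on $u_i^{\mathrm{out}}\to u_j^{\mathrm{in}}$ whenever $(u_i\to u_j)\in F$. A flow of value $\bfo^\top x=1$ saturates all source and sink arcs. Setting $a_{ij}$ equal to the flow on the middle arc $u_i^{\mathrm{out}}\to u_j^{\mathrm{in}}$ then gives row sums and column sums both equal to $x$, support in $F$, and total $1$; that is, $A\in\mathcal A(S)$ with $A\bfo=x$.

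By max-flow/min-cut it suffices to show every finite cut has capacity at least $1$. A finite cut is determined by a set $I$ of left nodes on the source side. Because the middle arcs have infinite capacity, all of $N^+(I)^{\mathrm{in}}$ must also lie on the source side, and the cheapest choice puts exactly those right nodes there. The cut capacity is then
$$
x_{U\setminus I}+x_{N^+(I)}=1-x_I+x_{N^+(I)}.
$$
This is at least $1$ if and only if $x_I\le x_{N^+(I)}$. For nonempty proper $I$ this is hypothesis (iii). For $I=\varnothing$ it is trivial. For $I=U$ it follows from $N^+(U)=U$, which is exactly why that assumption appears in the statement.

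Equivalently, one can avoid flows by a scaling argument. First treat rational $x$ and apply Hall's theorem to the blow-up bipartite graph, with $Mx_i$ copies of each type on each side; its neighbourhoods are unions of whole type classes, so Hall's condition reduces to (iii). Then pass to general $x$ by compactness of $\mathcal A(S)$ together with density of rationals. The catch is that perturbing an irrational $x$ to nearby rationals may break (iii). For that reason the flow formulation is cleaner, since it handles real capacities directly, and I would use it.

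The only step needing care is the cut computation: one must check that minimal cuts have exactly the described form, i.e.\ that the right-side nodes on the source side can be taken to be exactly $N^+(I)^{\mathrm{in}}$. This holds because adding any further right node to the source side only adds the capacity of its sink arc.
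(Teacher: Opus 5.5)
Your proof is correct and follows essentially the same route as the paper. In both, (i)$\Leftrightarrow$(ii) holds by definition, (ii)$\Rightarrow$(iii) is the same outflow computation, and (iii)$\Rightarrow$(ii) uses max-flow/min-cut on the same source--left--right--sink network, with the bound $(1-x_I)+x_J\ge(1-x_I)+x_{N^+(I)}$ on every finite cut and the case $I=U$ handled by $N^+(U)=U$. (As an aside, the ``catch'' you raise for the scaling route can be avoided: the set cut out of $\Delta_q$ by (iii) is a rational polytope, so its rational points are dense in it.)
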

Note that $N^+(U)=U$ follows directly when $S$ is strongly connected and $q\ge2$; and for $q=1$ it says that the unique node of $S$ has a loop. 
\begin{proof}
The equivalence (i)$\Leftrightarrow$(ii) is Definition~\ref{def:cycle_polytope}; we prove (ii)$\Leftrightarrow$(iii).

\xc{(ii)$\Rightarrow$(iii):} Let $A\in\mathcal A(S)$ with $A\bfo=A^\top\bfo=x$. For nonempty $I\subsetneq U$, we have
$$
x_I=\sum_{i\in I}\sum_j a_{ij}=\sum_{j\in N^+(I)}\sum_{i\in I} a_{ij}\leq \sum_{j\in N^+(I)}\sum_i a_{ij}=x_{N^+(I)},
$$
where we used the fact that  $a_{ij}=0$ outside $F$ for the second equality.

\xc{(iii)$\Rightarrow$(ii):} 
We seek a nonnegative matrix $A=[a_{ij}]$, supported on $F$, whose row sums and column sums are both equal to $x$. We realize such a matrix as a flow in a bipartite network.

Introduce a source $s$, left vertices $\ell_1,\dots,\ell_q$, right vertices $r_1,\dots,r_q$, and a sink $t$. Put an edge $s\to\ell_i$ of capacity $x_i$, an edge $r_j\to t$ of capacity $x_j$, and, whenever $(u_i\to u_j)\in F$, an edge $\ell_i\to r_j$ of infinite capacity. Thus, to prove {\em (ii)}, it suffices to find a flow of value $1$: indeed, such a flow saturates all edges leaving $s$ and entering $t$, and the flows on the middle edges then provide the entries of a nonnegative matrix $A$ supported on $F$ whose row and column sums are both equal to $x$.

Since any flow on the network is clearly upper bounded by $1$ (as the maximum flow out from $s$ is 1), it suffices, by the max-flow/min-cut theorem, to show that every $s$--$t$ cut has capacity lower bounded by $1$. Consider such a cut, and let
$$
I:=\{i:\ell_i\text{ lies on the }s\text{-side}\},\qquad J:=\{j:r_j\text{ lies on the }s\text{-side}\}.
$$

If some admissible edge $\ell_i\to r_j$ goes from the $s$-side to the $t$-side, then the cut has infinite capacity, so there is nothing to prove. Thus, for a finite cut, every out-neighbor of $I$ must belong to $J$, that is,
$$
N^+(I)\subseteq J.
$$
The finite-capacity edges crossing the cut are therefore $s\to\ell_i$ for $i\notin I$ and $r_j\to t$ for $j\in J$. Hence the capacity of the cut is
$$
\sum_{i\notin I}x_i+\sum_{j\in J}x_j=(1-x_I)+x_J\geq (1-x_I)+x_{N^+(I)}.
$$

If $I=\varnothing$, this is at least $1$. If $\varnothing\neq I\subsetneq U$, hypothesis {\em (iii)} gives $x_{N^+(I)}\geq x_I$, so the capacity is again at least $1$. Finally, if $I=U$, the assumption $N^+(U)=U$ and the inclusion $N^+(I)\subseteq J$ force $J=U$, so the capacity is exactly $1$. Thus every $s$--$t$ cut has capacity at least $1$. On the other hand, the cut whose $s$-side consists only of $s$ has capacity $\sum_i x_i=1$. Hence the minimum cut has capacity $1$, and the max-flow/min-cut theorem yields a maximum flow $f$ of value $1$.

Since the total capacity of the edges leaving $s$ is $1$, every edge $s\to\ell_i$ is saturated, so $f(s,\ell_i)=x_i$ for every $i$. Similarly, every edge $r_j\to t$ is saturated, so $f(r_j,t)=x_j$ for every $j$. Now define
$$
a_{ij}:=
\begin{cases}
f(\ell_i,r_j), & (u_i\to u_j)\in F,\\
0, & \text{otherwise}.
\end{cases}
$$
Then $A=[a_{ij}]$ is nonnegative and supported on $F$. Flow conservation at $\ell_i$ and $r_j$ gives
$$
\sum_j a_{ij}=x_i,\qquad \sum_i a_{ij}=x_j,
$$
and therefore $A\bfo=A^\top\bfo=x$. Moreover, $\bfo^\top A\bfo=\bfo^\top x=1$.  Thus $A\in\mathcal A(S)$ and $A\bfo=x$, proving {\em (ii)}.
\end{proof}

\begin{figure}[t]
\centering
\begin{tikzpicture}[
>=Latex,
node distance=10mm and 12mm,
every node/.style={font=\small},
src/.style={circle, draw, minimum size=7mm, inner sep=0pt},
typeL/.style={circle, draw, minimum size=7mm, inner sep=0pt, fill=blue!10},
typeR/.style={circle, draw, minimum size=7mm, inner sep=0pt, fill=green!10},
inI/.style={circle, draw, minimum size=7mm, inner sep=0pt, fill=blue!30},
inN/.style={circle, draw, minimum size=7mm, inner sep=0pt, fill=green!30},
lab/.style={font=\scriptsize},
cutbox/.style={rounded corners, dashed, very thick, draw=red!70}
]

\node[src] (s) at (0,0) {$s$};
\node[src] (t) at (10,0) {$t$};

\node[inI] (i1) at (2, 1.8) {$\ell_1$};
\node[inI] (i2) at (2, 0.6) {$\ell_2$};
\node[typeL] (i3) at (2,-0.8) {$\ell_3$};
\node[typeL] (i4) at (2,-2.0) {$\ell_4$};

\node[inN] (j1) at (8, 1.8) {$r_1$};
\node[inN] (j3) at (8, 0.6) {$r_3$};
\node[typeR] (j2) at (8,-0.8) {$r_2$};
\node[typeR] (j4) at (8,-2.0) {$r_4$};

\draw[->, ] (s) -- (i1) node[midway, above] {$x_1$};
\draw[->, ] (s) -- (i2) node[midway, above] {$x_2$};
\draw[->, ] (s) -- (i3) node[midway, below] {$x_3$};
\draw[->, ] (s) -- (i4) node[midway, below] {$x_4$};

\draw[->, ] (j1) -- (t) node[midway, above] {$x_1$};
\draw[->, ] (j3) -- (t) node[midway, above] {$x_3$};
\draw[->, ] (j2) -- (t) node[midway, below] {$x_2$};
\draw[->, ] (j4) -- (t) node[midway, below] {$x_4$};

\draw[->, ] (i1) -- (j1) node[midway, above, sloped] {$\infty$};
\draw[->, ] (i1) -- (j3) node[midway, below, sloped] {$\infty$};
\draw[->, ] (i2) -- (j1) node[midway, above, sloped] {$\infty$};

\draw[->,, opacity=.55] (i3) -- (j2) node[midway, above, sloped] {$\infty$};
\draw[->,, opacity=.55] (i4) -- (j4) node[midway, below, sloped] {$\infty$};

\draw[cutbox]
(-0.6, 2.45) -- (8.55, 2.45) -- (8.55, -0.15) -- (0.6, -0.15) -- (0.6, -0.95) -- (-0.6, -0.95)
-- cycle;

\node[lab, anchor=south west] at ($(i1)+(0.1,0.95)$)
{$P$: the $s$-side of the cut};
\node[lab, align=left, anchor=west] at (10.7,-1.5) {$Q$: the $t$-side\\is everything outside};

\draw[red!70, very thick] (s) -- (i3);
\draw[red!70, very thick] (s) -- (i4);
\draw[red!70, very thick] (j1) -- (t);
\draw[red!70, very thick] (j3) -- (t);

\node[lab, text=red!70!black, align=left, anchor=west] at (10.7,0.2)
{cut capacity contributes\\[1mm]
$\displaystyle \sum_{i\notin I} x_i + \sum_{j\in N^+(I)} x_j$};

\end{tikzpicture}
\caption{Flow network for the proof of Lemma~\ref{lem:hall_Xdir}. The dashed curve indicates the $s$-side $P$ of a finite $s$--$t$ cut. If $I$ is the set of left nodes in $P$ and $J$ the set of right nodes in $P$, finiteness of the cut forces $N^+(I)\subseteq J$, since every admissible edge $\ell_i\to r_j$ has infinite capacity. Thus the cut capacity is $(1-x_I)+x_J\geq (1-x_I)+x_{N^+(I)}$. In the example shown, $I=\{1,2\}$ and $J=N^+(I)=\{1,3\}$, and the red edges are precisely the finite-capacity edges crossing the cut.
}
\label{fig:hallXdir_flow_cut}
\end{figure}

We now introduce the following definition, see Figure~\ref{fig:blowup_example} for an illustration:
\begin{definition}[Complete blow-up]\label{def:complete_blowup}
For $m=(m_1,\dots,m_q)\in\Z_{\ge0}^q$, the {\em complete blow-up} $S[m]$ is the digraph obtained by replacing each node $u_i$ of $S$ by a set $V_i$ of $m_i$ vertices and putting in every directed edge from $V_i$ to $V_j$ whenever $(u_i\to u_j)\in F$. A loop at $u_i$ implies that  every edge inside $V_i$ is present, loops included.
\end{definition}
\begin{figure}[t]
\centering
\begin{tikzpicture}[
>=Latex,
every node/.style={font=\small},
type/.style={circle,draw,minimum size=7mm,inner sep=0pt},
vtx/.style={circle,draw,minimum size=6mm,inner sep=0pt},
lab/.style={font=\scriptsize}
]

% Skeleton
\begin{scope}
\node[type] (u1) at (0,0.9) {$u_1$};
\node[type] (u2) at (0,-0.9) {$u_2$};

\draw[->] (u1) edge[loop right] ();
\draw[->] (u2) -- (u1);

\node at (0,-1.8) {skeleton $S$};
\end{scope}

% Blow-up arrow
\draw[->,very thick,gray!70] (1.4,0) -- (2.8,0);
\node[lab,above] at (2.1,0) {$m=(2,3)$};

% Complete blow-up
\begin{scope}[xshift=5cm]

% V_1
\node[vtx] (a1) at (-1.2,0.65) {};
\node[vtx] (a2) at (-1.2,-0.65) {};

\node[draw,rounded corners,
      fit=(a1)(a2),
      inner xsep=8pt,inner ysep=8pt] (B1) {};

\node[above=2pt of B1] {$V_1$};

% loops and both directed edges inside V_1
\draw[->] (a1) edge[loop left] ();
\draw[->] (a2) edge[loop left] ();
\draw[->] (a1) to[bend left=18] (a2);
\draw[->] (a2) to[bend left=18] (a1);

% V_2
\node[vtx] (b1) at (1.7,1.0) {};
\node[vtx] (b2) at (1.7,0) {};
\node[vtx] (b3) at (1.7,-1.0) {};

\node[draw,rounded corners,
      fit=(b1)(b2)(b3),
      inner xsep=8pt,inner ysep=8pt] (B2) {};

\node[above=2pt of B2] {$V_2$};

% Every edge from V_2 to V_1
\foreach \a in {b1,b2,b3} {
  \foreach \b in {a1,a2} {
    \draw[->] (\a) -- (\b);
  }
}

\node at (0.25,-2.0) {complete blow-up $S[m]$};

\end{scope}

\end{tikzpicture}
\caption{The skeleton $S$ and its complete blow-up $S[m]$ for $m=(2,3)$. The loop at $u_1$ produces all directed edges inside $V_1$, including loops, while the edge $u_2\to u_1$ produces all directed edges from $V_2$ to $V_1$.}
\label{fig:blowup_example}
\end{figure}

The inequalities of Lemma~\ref{lem:hall_Xdir} have a concrete combinatorial meaning: we say that a vector of type counts is {\em feasible} if it lies in $\vec{\mathcal X}(S)$ after normalization; feasibility is exactly the statement that the corresponding {\em blow-up} of the skeleton can be partitioned into disjoint cycles.

\begin{proposition}[Cycle covers in complete blow-ups]\label{prop:blowup_cover}
Let $m\in\Z_{\ge0}^q$ and $k:=\bfo^\top m\ge1$. Then $S[m]$ admits a cycle cover if and only if $m/k\in\vec{\mathcal X}(S)$.
\end{proposition}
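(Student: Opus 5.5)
The plan is to prove the two directions separately, using the type map from $S[m]$ to $S$ for necessity and an integral-flow argument for sufficiency.

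\emph{Necessity.} Suppose $S[m]$ admits a cycle cover, i.e.\ a permutation $\varphi$ of its $k$ vertices with $v\to\varphi(v)$ an edge for every $v$. I will define $n_{ij}$ as the number of vertices $v\in V_i$ with $\varphi(v)\in V_j$. Since every edge of $S[m]$ from $V_i$ to $V_j$ comes from an edge $u_i\to u_j$ of $S$, we get $n_{ij}=0$ unless $(u_i\to u_j)\in F$. Each vertex of $V_i$ has exactly one out-edge in the cover, so the row sums are $\sum_j n_{ij}=m_i$. Since $\varphi$ is a bijection, each vertex of $V_j$ has exactly one in-edge, so the column sums are $\sum_i n_{ij}=m_j$. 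Setting $A:=N/k$ then gives $A\in\mathcal A(S)$ with $A\bfo=m/k$, hence $m/k\in\vec{\mathcal X}(S)$. This is exactly the computation depicted in Figure~\ref{fig:overview}.

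\emph{Sufficiency.} Suppose $m/k\in\vec{\mathcal X}(S)$, so there is $A\in\mathcal A(S)$ with $A\bfo=A^\top\bfo=m/k$. I will rescale to $kA$, a nonnegative real matrix supported on $F$ with row and column sums equal to $m$. The key step is to replace it by a nonnegative \emph{integer} matrix $N$ with the same support constraint and the same margins. I would obtain $N$ by integrality of flows: in the network from the proof of Lemma~\ref{lem:hall_Xdir}, use capacities $m_i$ on $s\to\ell_i$, capacities $m_j$ on $r_j\to t$, and infinite capacity on $\ell_i\to r_j$ for $(u_i\to u_j)\in F$. The matrix $kA$ is a feasible flow of value $k$. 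All capacities are integers, so the integral max-flow theorem yields an integral flow of value $k$, and that flow is the desired $N$. Alternatively, Birkhoff--von Neumann style rounding of the transportation polytope, which is integral, gives the same conclusion.

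Given $N$, I will build the permutation directly. Partition each $V_i$ into blocks $P_{ij}$ of sizes $n_{ij}$ (sources), which is possible since $\sum_j n_{ij}=m_i$. Partition each $V_j$ into blocks $Q_{ij}$ of sizes $n_{ij}$ (targets), which is possible since $\sum_i n_{ij}=m_j$. Then let $\varphi$ map $P_{ij}$ bijectively onto $Q_{ij}$. The result is a bijection of the vertex set, and every pair $v\to\varphi(v)$ with $v\in V_i$ and $\varphi(v)\in V_j$ has $n_{ij}>0$, hence $(u_i\to u_j)\in F$. Because the blow-up is complete, $v\to\varphi(v)$ is an edge of $S[m]$; when $i=j$ this uses the loop at $u_i$, and $v=\varphi(v)$ is allowed since loops are present. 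So $\varphi$ is a cycle cover.

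Equivalently, one can apply Hall's theorem to $B(S[m])$ via Lemma~\ref{lem:cyclecover_matching}: for $X\subseteq L$ with type set $I$, we have $|N(X)|=k\,x_{N^+(I)}\ge k\,x_I\ge|X|$ by Lemma~\ref{lem:hall_Xdir}(iii). One must check the hypothesis $N^+(U)=U$ there, or first restrict to the support of $m$. This restriction is the only subtle point: nodes with $m_i=0$ must be discarded, so one should apply the lemma to $S$ itself, where the circulation $A$ already exists and no such hypothesis is needed. For this reason I prefer the integral-flow route. The main obstacle is the integrality step, and it is handled by the standard integral flow theorem.
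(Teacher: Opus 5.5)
Your proof is correct. The necessity direction is the same as the paper's. For sufficiency you take a genuinely different route.

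The paper never constructs an integer circulation. It applies Hall's theorem directly to the bipartite lift of $S[m]$. For a left set $Z$ whose copies come from the node set $I$, completeness of the blow-up makes the neighborhood exactly the union of the full blocks $V_j^{\rm in}$, $j\in N^+(I)$. Hence $|N_B(Z)|=m_{N^+(I)}\ge m_I\ge |Z|$, using the (ii)$\Rightarrow$(iii) direction of Lemma~\ref{lem:hall_Xdir}, and Lemma~\ref{lem:cyclecover_matching} finishes the argument. You instead round $kA$ to an integral circulation by integrality of max-flow, and then assemble the permutation block by block.

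Both arguments rest on the same integrality phenomenon, since Hall's theorem and integral max-flow are equivalent. They buy different things:
\begin{itemize}
\item The paper's version reuses the inequality description of $\vec{\mathcal X}(S)$. It is also the deterministic template of Lemma~\ref{lem:dense_block_matching_combined}, where the same block structure of neighborhoods, now with a margin, drives the random-lift argument.
\item Your version bypasses the inequality description entirely. It also proves a sharper statement: every nonnegative integer circulation on $S$ with margins $m$ is the type pattern $[n_{ij}]$ of some cycle cover of $S[m]$.
\end{itemize}

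On your closing aside: the concern about $N^+(U)=U$ and about discarding nodes with $m_i=0$ is unnecessary for the Hall route. The computation in the proof of (ii)$\Rightarrow$(iii) works verbatim for every nonempty $I$, including $I=U$, without that hypothesis. Moreover, $I\subseteq\supp(m)$ holds automatically, because $I$ consists only of nodes that actually have copies in $Z$.
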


The ``if'' direction of the above proposition is also proved as~\cite[Theorem~5]{chen2025hamiltonicity}; we include a short proof based on Hall's theorem for completeness.

\begin{proof}
Suppose first that $S[m]$ admits a cycle cover, and let $n_{ij}$ be the number of edges from $V_i$ to $V_j$ in the cover. The matrix $N=[n_{ij}]$ is supported on $F$, and since every vertex of the cover has outdegree and indegree $1$, its row sums and column sums both equal $m$. Hence $A:=N/k$ lies in $\mathcal A(S)$ and $A\bfo=m/k$, so $m/k\in\vec{\mathcal X}(S)$.

Conversely, suppose $m/k\in\vec{\mathcal X}(S)$, and consider the bipartite lift $B$ of $S[m]$. For a set $Z$ of left vertices of $B$, let $N_B(Z)$ denote its set of neighbors on the right, and let $I\subseteq U$ be the set of nodes having a copy in $Z$. By the definition of $S[m]$, if $i\in I$ and $u_i\to u_j$ is an edge of $S$, then every vertex of $V_i$ is joined to every vertex of $V_j$. Hence
$$
N_B(Z)=\bigcup_{j\in N^+(I)}V_j^{\rm in},
$$
where $V_j^{\rm in}$ denotes the right-side copy of $V_j$ in the bipartite lift. Consequently, $|N_B(Z)|=m_{N^+(I)}$.

On the other hand, since $Z$ contains only copies of nodes in $I$, $|Z|\leq m_I$.  Since $m/k\in\vec{\mathcal X}(S)$, Lemma~\ref{lem:hall_Xdir} gives $m_I\leq m_{N^+(I)}$ (for $I=U$ this holds trivially, since $N^+(U)=U$). Putting these inequalities together, we get
$$
|Z|\leq m_I\leq m_{N^+(I)}=|N_B(Z)|.
$$
Since $Z$ was arbitrary, Hall's condition holds for every set $Z$ of left vertices and Hall's theorem gives a perfect matching of $B$, and Lemma~\ref{lem:cyclecover_matching} then gives a cycle cover of $S[m]$.
\end{proof}

The preceding proposition now yields the corresponding statement for sampled graphs: if $D_n[V']$ admits a cycle cover, then so does the complete blow-up determined by the type counts of $V'$, and hence $x(V')\in\vec{\mathcal X}(S)$:

\begin{lemma}\label{lem:cover_implies_Xdir}
Let $D_n\sim W$ with skeleton $S=(U,F)$, and let $V'\subseteq V$ be nonempty. If $D_n[V']$ admits a cycle cover, then $x(V')\in\vec{\mathcal X}(S)$.
\end{lemma}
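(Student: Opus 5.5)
The plan is to reduce to Proposition~\ref{prop:blowup_cover} through the type map $\pi$, which the paper already noted is a digraph homomorphism from $D_n$ to the skeleton $S$. One could also argue directly with the count matrix; both routes are short, and I will do the direct one with the blow-up as a cross-check.

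Fix a cycle cover $\vec F\subseteq\vec E(D_n[V'])$ and write $k:=|V'|\ge1$. Since $\pi$ is a homomorphism, every edge $v\to w$ of $\vec F$ satisfies $(\pi(v)\to\pi(w))\in F$. Each edge of $D_n[V']$ joins a vertex of type $i$ in $V'$ to one of type $j$ in $V'$. Hence $D_n[V']$ is, after identifying vertices, a spanning subdigraph of the complete blow-up $S[m]$ with $m:=(N_1(V'),\dots,N_q(V'))$. Loops are included here: a loop at a type-$i$ vertex forces $p_{ii}>0$, so $u_i$ is looped and $S[m]$ contains all loops on $V_i$.

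Therefore $\vec F$ is also a cycle cover of $S[m]$. By the ``only if'' direction of Proposition~\ref{prop:blowup_cover}, $m/k\in\vec{\mathcal X}(S)$, and $m/k=x(V')$ by definition.

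Equivalently, and as a self-contained check, let $n_{ij}$ be the number of edges of $\vec F$ from type $i$ to type $j$. This matrix is supported on $F$. Its row sums and column sums both equal $N_i(V')$, because each vertex has outdegree and indegree exactly one in $\vec F$. So $N/k\in\mathcal A(S)$ and $(N/k)\bfo=x(V')$.

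There is no real obstacle. The only point needing care is the homomorphism property, namely that edges of $D_n$ only appear between types $i,j$ with $p_{ij}>0$. This holds almost surely by Definition~\ref{def:sampling}, since an edge of probability $0$ is never included; the statement can be read either surely under that convention or almost surely.
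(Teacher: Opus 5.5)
Your proposal is correct and matches the paper's own proof: both view $D_n[V']$, via the type-map homomorphism, as a spanning subgraph of the complete blow-up $S[m]$ and then invoke the ``only if'' direction of Proposition~\ref{prop:blowup_cover}. Your direct count-matrix check simply inlines that direction's argument, and your remark that edges of a block with $p_{ij}=0$ never occur is exactly the homomorphism property the paper relies on.
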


\begin{proof}

Let $m$ be the type counts of $V'$. Then $\bfo^\top m=|V'|$ and $x(V')=m/|V'|$. Since $S[m]$ admits a cycle cover (take the vertex set of $S[m]$ to be $V'$ itself, the block $V_i$ consisting of the vertices of $V'$ of type $u_i$; since $\pi$ is a digraph homomorphism, every edge $v\to w$ of $D_n[V']$ satisfies $(\pi(v)\to\pi(w))\in F$ and is thus an edge of $S[m]$, so $D_n[V']$ is a spanning subgraph of $S[m]$ and any cycle cover of $D_n[V']$ is also a cycle cover of $S[m]$), Proposition~\ref{prop:blowup_cover} gives $m/|V'|\in\vec{\mathcal X}(S)$, and hence $x(V')\in\vec{\mathcal X}(S)$.
\end{proof}

\subsection{Realizable supports and Condition D}\label{ssec:Dgeometric}

Condition D is a statement about directed cycles in the skeleton. We record here what it gives in the cycle polytope: a description of the subsets of $U$ that occur as supports of points of $\vec{\mathcal X}(S)$, and the dimension of the faces $F_{T_r}$ along a nested sequence. These are the forms in which the condition will enter the arguments later on.

\begin{proposition}[Realizable supports]\label{prop:realizable_supports}
Let $T\subseteq U$ be nonempty.
\begin{enumerate}
\item[\textup{(i)}] There exists $x\in\vec{\mathcal X}(S)$ with $\supp(x)=T$ if and only if every node of $S[T]$ lies on a directed cycle of $S[T]$.
\item[\textup{(ii)}] Assume Condition D holds with ordering $t_1,\dots,t_q$, and set $T_r:=\{t_1,\dots,t_r\}$. Then $F_{T_r}:=\vec{\mathcal X}(S)\cap H_{T_r}$ is full-dimensional in $H_{T_r}$ for every $r$; equivalently, $\rint_{H_{T_r}}(F_{T_r})\neq\varnothing$.
\end{enumerate}
\end{proposition}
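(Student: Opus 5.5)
For (i), I would use the convex-hull description of Lemma~\ref{lem:cycle_polytope_conv_cycles}. Suppose first that $x\in\vec{\mathcal X}(S)$ has $\supp(x)=T$. Write $x=A\bfo$ with $A\in\mathcal A(S)$, and decompose $A=\sum_m\lambda_mA^{C_m}$ with every $\lambda_m>0$. All entries are nonnegative, so every cycle $C_m$ with $\lambda_m>0$ uses only nodes $i$ with $x_i>0$, that is, nodes in $T$. Hence each $C_m$ is a cycle of $S[T]$. Every $i\in T$ has $x_i>0$, so some $C_m$ passes through $i$. Conversely, suppose every node of $S[T]$ lies on a directed cycle of $S[T]$. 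For each $i\in T$ pick one such cycle $C^{(i)}$ and set $x:=\frac1{|T|}\sum_{i\in T}x^{C^{(i)}}$. This point lies in $\vec{\mathcal X}(S)$ by convexity and part (i) of that lemma. Each coordinate in $T$ is positive, since $i$ lies on $C^{(i)}$, and each coordinate outside $T$ is zero, since all the chosen cycles live in $S[T]$. So $\supp(x)=T$.

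For (ii), I will show that $F_{T_r}$ contains $r$ affinely independent points of $H_{T_r}$, which has dimension $r-1$. The argument is by induction on $r$, and the natural candidates are type frequency vectors of cycles.

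For $r=1$, $F_{T_1}=\{e_{t_1}\}$ comes from the loop at $t_1$, and this is full-dimensional in the $0$-dimensional space $H_{T_1}$.

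For the step from $r-1$ to $r$, assume $F_{T_{r-1}}$ contains affinely independent points $y_1,\dots,y_{r-1}$; all of them have $t_r$-coordinate $0$. Condition D at step $r$ gives a cycle $C$ of $S[T_r]$ through $t_r$. Then $x^C\in F_{T_r}$ and $x^C_{t_r}>0$. A point with nonzero $t_r$-coordinate cannot lie in $\aff\{y_1,\dots,y_{r-1}\}\subseteq\{x_{t_r}=0\}$. Therefore $y_1,\dots,y_{r-1},x^C$ are $r$ affinely independent points of $F_{T_r}\subseteq H_{T_r}$, and $F_{T_r}$ has dimension $r-1$.

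Since $F_{T_r}$ is convex and full-dimensional in $H_{T_r}$, its relative interior in $H_{T_r}$ is nonempty, which is the equivalent formulation. One subtlety is that the induction uses $F_{T_{r-1}}\subseteq F_{T_r}$. This holds because $H_{T_{r-1}}\subseteq H_{T_r}$.

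I do not expect a real obstacle. The only point needing care is in (i): positivity of the weights $\lambda_m$ is what forces each cycle in the decomposition to avoid the nodes outside $T$.
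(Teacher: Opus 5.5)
Your proof is correct and follows essentially the same route as the paper. Part (i) is the same convex-hull and cycle-decomposition argument. Your induction in (ii) is the paper's upper-triangular independence argument, carried out one coordinate at a time. You add a cycle vector through $t_r$ at each step, whereas the paper uses points with support exactly $T_s$ obtained from (i); positivity at $t_s$ with support in $T_s$ is all either argument needs.
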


\begin{proof}
For (i), we have from Lemma~\ref{lem:cycle_polytope_conv_cycles} that any point of the polytope is a convex combination of type frequency vectors of cycles of $S$. If the point is supported on $T$, every cycle appearing with positive weight in the convex combination visits only nodes of $T$ and is therefore a cycle of $S[T]$; if moreover the support is all of $T$, each $u\in T$ lies on one of them. Conversely, pick for each $u\in T$ a directed cycle of $S[T]$ through $u$ and average the type frequency vectors of the chosen cycles: the result lies in $\vec{\mathcal X}(S)$ and has support exactly $T$.

 For (ii), Condition D and part \textup{(i)} together provide points $x^{(1)},\dots,x^{(r)}\in F_{T_r}$ with $\supp(x^{(s)})=T_s$. Write them in the coordinates $t_1,\dots,t_r$ as the columns of an $r\times r$ matrix. Since $x^{(s)}$ vanishes on $t_{s+1},\dots,t_r$ and is positive at $t_s$, that matrix is upper triangular with strictly positive diagonal, so the $r$ points are linearly independent; lying in the hyperplane $\{\bfo^\top x=1\}$, they are then affinely independent. Their convex hull is an $(r-1)$-dimensional simplex contained in $F_{T_r}$, and $H_{T_r}$ has dimension $r-1$, so $F_{T_r}$ is full-dimensional in $H_{T_r}$.
\end{proof}

Condition D does not require $S[T_r]$ itself to admit a cycle cover:
each node need only lie on some directed cycle of $S[T_r]$.
For example, let $S$ have the edges $1\to1$, $1\leftrightarrow2$
and $1\leftrightarrow3$. Condition D holds for the ordering $1,2,3$,
but $S$ has no cycle cover, since $N^+(\{2,3\})=\{1\}$.
Giving node $1$ a second copy removes this obstruction:
every prefix of the ordering $1_a,2,1_b,3$ has a cycle cover
in the blow-up $S[(2,1,1)]$; see Figure~\ref{fig:D_without_skeleton_cover}.

\begin{figure}[t]
\centering
\begin{tikzpicture}[scale=.9,
  >=Latex,
  every node/.style={font=\small},
  vertex/.style={circle,draw,minimum size=6.5mm,inner sep=0pt},
  typeone/.style={vertex,draw=green!45!black,fill=green!16},
  typetwo/.style={vertex,draw=orange!65!black,fill=orange!22},
  typethree/.style={vertex,draw=RoyalBlue!65!black,fill=RoyalBlue!16},
  arc/.style={->,semithick},
  panel/.style={font=\small,align=center}
]
% The skeleton: the two cycles share node 1.
\node[panel] at (1.05,2.10) {(a) Skeleton $S$};
\node[typeone] (s1) at (1.05,0.90) {$1$};
\node[typetwo] (s2) at (0.15,-0.35) {$2$};
\node[typethree] (s3) at (1.95,-0.35) {$3$};
\path[arc] (s1) edge[loop above,min distance=7mm] (s1);
\path[arc] (s1) edge[bend left=18] (s2)
                  edge[bend left=18] (s3)
            (s2) edge[bend left=18] (s1)
            (s3) edge[bend left=18] (s1);
\node[panel] at (1.05,-1.12) {D holds; no cycle cover};

%\draw[gray!40] (2.85,-1.38) -- (2.85,2.32);
\node[panel] at (7.80,2.10)
  {(b) Cycle covers on the prefixes of $(1_a,2,1_b,3)$};

% k = 1. A loop is the entire cover.
\begin{scope}[xshift=3.55cm]
  \node[typeone] (a1) at (0.5,0.90) {$1_a$};
  \path[arc] (a1) edge[loop above,min distance=7mm] (a1);
  \node[panel] at (0.50,-1.12) {$k=1$};
\end{scope}

% k = 2. Replace the loop by the 2-cycle.
\begin{scope}[xshift=5.95cm]
  \node[typeone] (b1) at (0,0.90) {$1_a$};
  \node[typetwo] (b2) at (1.05,0.90) {$2$};
  \path[arc] (b1) edge[bend left=25] (b2)
              (b2) edge[bend left=25] (b1);
  \node[panel] at (0.50,-1.12) {$k=2$};
\end{scope}

% k = 3. The new copy of type 1 contributes its loop.
\begin{scope}[xshift=8.35cm]
  \node[typeone] (c1) at (0,0.90) {$1_a$};
  \node[typetwo] (c2) at (1.05,0.90) {$2$};
  \node[typeone] (c3) at (0,-0.35) {$1_b$};
  \path[arc] (c1) edge[bend left=25] (c2)
              (c2) edge[bend left=25] (c1);
  \path[arc] (c3) edge[loop right,min distance=7mm] (c3);
  \node[panel] at (0.50,-1.12) {$k=3$};
\end{scope}

% k = 4. Two disjoint 2-cycles cover the entire blow-up.
\begin{scope}[xshift=10.75cm]
  \node[typeone] (d1) at (0,0.90) {$1_a$};
  \node[typetwo] (d2) at (1.05,0.90) {$2$};
  \node[typeone] (d3) at (0,-0.35) {$1_b$};
  \node[typethree] (d4) at (1.05,-0.35) {$3$};
  \path[arc] (d1) edge[bend left=25] (d2)
              (d2) edge[bend left=25] (d1)
              (d3) edge[bend left=25] (d4)
              (d4) edge[bend left=25] (d3);
  \node[panel] at (0.50,-1.12) {$k=4$};
\end{scope}
\end{tikzpicture}
\caption{Condition D need not give a cycle cover of the skeleton.
The skeleton in (a) satisfies D for the ordering $1,2,3$, but nodes
$2$ and $3$ have the same sole out-neighbor. In (b), a second copy of
node $1$ allows a cycle cover on every prefix of $1_a,2,1_b,3$.
Only edges of the chosen covers are drawn in (b); the vertex sets
are nested, while the covers may change.}
\label{fig:D_without_skeleton_cover}
\end{figure}

%\begin{example}[Condition D does not require nested cycle covers in the skeleton]\label{ex:D_not_skeleton_S}
%Let $S$ have the edges $1\to1$, $1\leftrightarrow2$ and $1\leftrightarrow3$. Condition D holds for the ordering $1,2,3$, certified by the loop at $1$, then the cycle $1\leftrightarrow2$, then the cycle $1\leftrightarrow3$. The skeleton itself, however, has no cycle cover at all: $N^+(\{2,3\})=\{1\}$, so types $2$ and $3$ compete for the same single out-neighbor. Giving type $1$ a second copy removes the obstruction. Writing $1_a,1_b$ for the two copies, the ordering $1_a,2,1_b,3$ has a cycle cover on every prefix --- a loop at $1_a$; the cycle $1_a\leftrightarrow2$; that cycle together with a loop at $1_b$; and finally the two disjoint cycles $1_a\leftrightarrow2$ and $1_b\leftrightarrow3$.
%\end{example}

\subsection{Balanced orderings}\label{ssec:steinitz_balancing}

The next tool is a {\em Steinitz} rearrangement: the multiset of unit vectors specified by a count vector can be ordered so that every prefix sum stays within bounded $\ell_\infty$-distance of the segment from the origin to that vector. It is used to append vertices in a balanced order in both sufficiency proofs.

\begin{lemma}[Balanced ordering]
\label{lem:balanced_prefix_rounding_aux}\label{lem:appendable_balancing} There exists a constant $C_{\mathrm{bal}}(q)<\infty$, depending only on $q$, such that the following holds:  Let $d=(d_1,\dots,d_q)\in\Z_{\geq 0}^q$ and $|d|:=\bfo^\top d\geq 1.$ Then there exist vectors $u^{(0)},u^{(1)},\dots,u^{(|d|)}\in\Z_{\geq 0}^q$ such that
$$
u^{(0)}=0,\qquad u^{(|d|)}=d,\qquad
u^{(s+1)}-u^{(s)}\in\{e_1,\dots,e_q\}\quad(0\leq s<|d|),
$$ 
and $\left\|u^{(s)}-\frac{s}{|d|}\,d\right\|_\infty\leq C_{\mathrm{bal}}(q)$, for all $0\leq s\leq |d|$.
\end{lemma}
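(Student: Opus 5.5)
A greedy ``largest deficit first'' rule gives the ordering directly and yields $C_{\mathrm{bal}}(q)=q-1$; no appeal to the Steinitz lemma is needed. Write $N:=|d|$ and $r_i(s):=\frac{s}{N}d_i-u^{(s)}_i$ for the deficit of coordinate $i$ after $s$ steps. Since $\bfo^\top u^{(s)}=s=\frac{s}{N}\bfo^\top d$, we have $\sum_i r_i(s)=0$ for every $s$. At step $s\to s+1$ we increment a coordinate $i$ maximizing $r_i(s)+\frac{d_i}{N}$, which is the deficit that coordinate would have after the step if left untouched. The plan is to show that this rule is well defined, in the sense that it never increments a coordinate beyond $d_i$, and that it keeps every $|r_i(s)|$ bounded by a constant depending only on $q$.

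\textbf{Step 1 (upper bound).} We show $r_i(s)<1$ for all $i$ and $s$. Set $a_i:=r_i(s)+d_i/N$. These satisfy $\sum_i a_i=1$, and after the step the new deficits are $a_i$ for $i\neq j$ and $a_j-1$ for the chosen coordinate $j$. Suppose $a_j\ge a_i$ for all $i$. Then the chosen coordinate ends with $a_j-1<1$, since $a_j\le 1+\max_i r_i(s)$. For the other coordinates, $a_i<1$ follows from $r_i(s)<1$ and the maximality of $j$. Written this way the induction does not quite close, so the argument needs one more ingredient.

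\textbf{Cleaner route.} The cleaner way is the classical chairman-assignment (Tijdeman) style argument. I would first establish the lower bound $r_i(s)>-1$. A coordinate only decreases its deficit when it is chosen, and it is chosen only when its $a_i$ is the maximum, hence $a_i\ge 1/q>0$ because the $a_i$ sum to $1$. Immediately after being chosen its deficit is $a_i-1>-1$. Between choices the deficit of coordinate $i$ only increases. So by induction $r_i(s)>-1$ for all $i,s$. Combining this with $\sum_i r_i(s)=0$ gives $r_i(s)=-\sum_{k\neq i}r_k(s)<q-1$. Hence $\|r(s)\|_\infty\le q-1$, which proves the lemma with $C_{\mathrm{bal}}(q)=q-1$.

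\textbf{Step 2 (feasibility).} The rule never overshoots: at the final time $r(N)=d-u^{(N)}$ with $\bfo^\top r(N)=0$. Moreover, $u^{(s)}_i\le d_i$ throughout. If $u^{(s)}_i=d_i$ at some step $s<N$, then $r_i(s)=d_i(s/N-1)\le 0$ and $a_i=d_i(s+1-N)/N\le 0$, while some other coordinate has positive $a$ because the $a$'s sum to $1$. So $i$ is never chosen once it is exhausted. At $s=N$ all coordinates then satisfy $u^{(N)}_i\le d_i$ with total $N=\bfo^\top d$, which forces $u^{(N)}=d$. Coordinates with $d_i=0$ have $a_i=r_i(s)=-u^{(s)}_i\le 0$ and are never chosen.

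\textbf{Main obstacle.} The delicate point is the lower-bound induction: one must check that a chosen coordinate's $a_i$ is at least $1/q$, or at least strictly positive, at the moment of choice. Strict positivity of the maximum of the $a_i$ follows from $\sum_i a_i=1$, and that is all the argument requires.
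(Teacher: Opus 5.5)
Your proof is correct, and it takes a different route from the paper. The paper forms the zero-sum multiset of vectors $e_i-d/|d|$ (with $d_i$ copies of the $i$th one), each of $\ell_\infty$-norm at most $1$. It then invokes the Steinitz rearrangement theorem in $\ell_\infty^q$ as a black box, so $C_{\mathrm{bal}}(q)$ is whatever that theorem provides and is never made explicit.

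You instead exploit the special structure: every increment is a unit vector and the target direction is fixed. You analyse the greedy rule that increments a coordinate maximizing $a_i=r_i(s)+d_i/|d|$. Since $\sum_i a_i=1$, the chosen coordinate has $a_i\ge 1/q$, so the one-sided bound $r_i(s)>-1$ propagates by a trivial induction. The identity $\sum_i r_i(s)=0$ then upgrades it to $\|r(s)\|_\infty\le q-1$. Your exhaustion argument gives $u^{(|d|)}=d$. Alternatively, each $r_i(|d|)=d_i-u^{(|d|)}_i$ is an integer $>-1$, hence nonnegative, and these integers sum to zero.

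Your approach gives a self-contained, constructive proof with the explicit constant $C_{\mathrm{bal}}(q)=q-1$. That is useful where the constant enters quantitatively, e.g.\ the choice $\beta>L\,C_{\mathrm{bal}}(q)$ in the proof of Lemma~\ref{lem:integer_path}. The paper's approach is shorter on the page and applies to arbitrary bounded zero-sum families, not only unit increments.

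When writing it up, delete the ``Step~1 (upper bound)'' paragraph. It is an abandoned attempt that, as you say, does not close. The lower bound together with the zero-sum identity is the whole argument.
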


\begin{proof}
For each $i\in[q]$, set $v_i:=e_i-d/|d|\in\R^q$ and let $\mathcal M$ be the multiset containing $d_i$ copies of $v_i$, for $i=1,\dots,q$. It has $\bfo^\top d=|d|$ elements, and
$$
\|v_i\|_\infty\leq 1,
\qquad
\sum_{i=1}^q d_i v_i
=
\sum_{i=1}^q d_ie_i-|d|\,\frac{d}{|d|}
=
d-d
=
0,
$$ where the norm bound holds because every coordinate of $d/|d|$ lies in $[0,1]$. By the Steinitz rearrangement theorem in $\ell_\infty^q$~\cite{SteinitzBedingtKR,grinbergSevastyanov1980}, there exists a constant $C_{\mathrm{bal}}(q)<\infty$ depending only on $q$ and an ordering $v^{(1)},\dots,v^{(|d|)}$ of $\mathcal M$ such that every partial sum satisfies
$$
\left\|\sum_{r=1}^s v^{(r)}\right\|_\infty \leq C_{\mathrm{bal}}(q)
\qquad \mbox{ for } s\in[|d|].
$$
Let $\tau_r$ be the type corresponding to $v^{(r)}$, so that $v^{(r)}=e_{\tau_r}-d/|d|$, and set $u^{(s)}:=\sum_{r=1}^s e_{\tau_r}$ (with $u^{(0)}:=0$). Then for every $s$,
$$
\sum_{r=1}^s v^{(r)}
=
\sum_{r=1}^s \Bigl(e_{\tau_r}-\frac{d}{|d|}\Bigr)
=
u^{(s)}-\frac{s}{|d|}\,d.
$$
Hence $\|u^{(s)}-(s/|d|)d\|_\infty\leq C_{\mathrm{bal}}(q)$, and $u^{(|d|)}=d$ since the multiset contains exactly $d_i$ copies of $e_i$, which is the claim.
\end{proof}

\section{Building nested sequences of cycle covers}\label{sec:chains}

Both sufficiency proofs need a nested sequence of vertex sets whose type frequency vectors stay in a fixed compact subset of $\rint\vec{\mathcal X}(S)$, and a way to ensure that the random graph covers every set of the sequence at once. The second requirement is probabilistic and is common to the two proofs; it is handled in  Section~\ref{ssec:dense_block_matching}. The sequence of sets, however, differs. For $\mathcal N$, the sets are the prefixes of a balanced ordering of all $n$ vertices. For $\mathcal S$ the sequence must start with a single looped vertex, and building its first part  is where Condition D is used; this is the subject of Section~\ref{ssec:count_paths_templates}.

\subsection{From Condition D to a finite digraph}\label{ssec:count_paths_templates}

Condition D provides a nested sequence of support faces, each one full-dimensional inside its own coordinate subspace (Proposition~\ref{prop:realizable_supports}). We first construct a sequence of integer vectors, that grows by one unit in one coordinate at a time, and whose supports follow the ordering of Condition D, and whose normalizations never leave the polytope. Figure~\ref{fig:seed_path} shows the shape of the resulting path.

\begin{lemma}\label{lem:integer_path}
Let $P\subseteq\Delta_q$ be a rational polytope. Suppose there is an ordering $t_1,\dots,t_q$ of $[q]$ such that, with $T_r:=\{t_1,\dots,t_r\}$, the set $P\cap H_{T_r}$ is full-dimensional in $H_{T_r}$ for every $r$. Then there exist $Q\in\N$ and integer vectors $m^{(1)},\dots,m^{(Q)}\in\Z_{\ge0}^q$ such that
$$
m^{(1)}=e_{t_1},\qquad \bfo^\top m^{(k)}=k,\qquad m^{(k+1)}-m^{(k)}\in\{e_1,\dots,e_q\},
$$
$$
\frac{m^{(k)}}{k}\in P\quad(1\leq k\leq Q),\qquad \frac{m^{(Q)}}{Q}\in\rint(P)\cap\mathbb Q^q.
$$
\end{lemma}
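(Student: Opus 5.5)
We build the path stage by stage, following the ordering $t_1,\dots,t_q$. For each $r$ we fix a rational point $y^{(r)}\in\rint_{H_{T_r}}(P\cap H_{T_r})$. Such a point exists because the face $P\cap H_{T_r}$ is a rational polytope that is full-dimensional in $H_{T_r}$. We choose $y^{(1)}=e_{t_1}$; this is forced, since $P\cap H_{T_1}$ is the single point $e_{t_1}$. Each $y^{(r)}$ has $\supp(y^{(r)})=T_r$, and it has some radius $\rho_r>0$ such that the $\ell_\infty$-ball of radius $\rho_r$ around it, intersected with $H_{T_r}$, lies in $P$. At $r=q$ this gives $y^{(q)}\in\rint(P)$.

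The path consists of a run inside $H_{T_1}$, then a run inside $H_{T_2}$, and so on. During stage $r$ only the coordinates in $T_r$ increase, so the normalized vector $m^{(k)}/k$ stays in $H_{T_r}$. The stage aims to move $m/k$ from near $y^{(r-1)}$ (which lies on the boundary of $P\cap H_{T_r}$) to near $y^{(r)}$.

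For the transition we use the convex hull of $y^{(r-1)}$ and $y^{(r)}$. If the current count is $m$ with $|m|=k$ and $m/k=y^{(r-1)}$, we append $M y^{(r)}$ for a large integer $M$, chosen so that $My^{(r)}\in\Z^q$. Every intermediate normalized vector should then be close to the segment $[y^{(r-1)},y^{(r)}]$. Points on the half-open segment $(y^{(r-1)},y^{(r)}]$ lie in the relative interior of the face in $H_{T_r}$, but they approach its boundary near $y^{(r-1)}$, so the error must be controlled there. We add the increments $My^{(r)}$ in the balanced order of Lemma~\ref{lem:balanced_prefix_rounding_aux}. Then, after $s$ of the added steps, the count equals $m+\tfrac{s}{M'}My^{(r)}+O(1)$, where $M'=\bfo^\top My^{(r)}=M$. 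Dividing by $k+s$ gives the point
$$\tfrac{k}{k+s}y^{(r-1)}+\tfrac{s}{k+s}y^{(r)}$$
on the segment, up to an error $O(C_{\mathrm{bal}}/(k+s))$.

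\textbf{Main obstacle.} We must keep this error inside $P$ near the start of each stage, where the segment point is only at distance of order $\tfrac{s}{k+s}\rho_r$ from the relative boundary. The error $C_{\mathrm{bal}}/(k+s)$ dominates when $s$ is small. We handle this by scaling before each stage. Replace the current path end $m$ by $\lambda m$ for a huge integer $\lambda$, obtained by repeating the previous stage's increments $\lambda$ times in balanced order. Within $H_{T_{r-1}}$ this is safe, because $y^{(r-1)}$ is interior there with margin $\rho_{r-1}$ and the error is $O(1/k)$, so a large enough starting $k$ keeps it inside. After this scaling, $k$ is huge at the start of stage $r$.

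The new coordinate $t_r$ cannot be introduced ``slightly'', because the first step adds a whole vertex of type $t_r$. It also cannot simply be covered by tolerance, since points of $H_{T_r}$ near $y^{(r-1)}$ need not lie in $P$. What we need is that $P\cap H_{T_r}$ contains a neighborhood of $y^{(r-1)}$ within the cone directed toward $y^{(r)}$, that is, points of the form $y^{(r-1)}+\epsilon(y^{(r)}-y^{(r-1)})+z$ with $\|z\|\le c\,\epsilon$. This holds by convexity: $P$ contains the convex hull of $y^{(r-1)}$ and the $\rho_r$-ball around $y^{(r)}$ within $H_{T_r}$. The error at step $s$ is $O(1/(k+s))$ while $\epsilon\approx s/(k+s)$, so we need $s\gtrsim C_{\mathrm{bal}}/(c)$. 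For the first few steps we instead use exact multiples of a fixed rational direction, plus a component in the face $H_{T_{r-1}}$ to absorb the discrepancy.

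As a cleaner alternative, we order the stage-$r$ increments greedily: at each step we add the type $i\in T_r$ that keeps $m/k$ closest to the ray toward $y^{(r)}$. We then verify membership through the Hall inequalities $\Phi_I\ge0$ of Lemma~\ref{lem:hall_Xdir}, which are linear and rational. This is the step I expect to require the most care.

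At the end of stage $q$ we take $Q=|m|$. The endpoint $m^{(Q)}/Q$ equals $y^{(q)}$ up to a mixture with the earlier points, and it is rational and lies in $\rint(P)$ since $y^{(q)}\in\rint(P)$ with weight close to $1$. All intermediate normalized vectors lie in $P$ by the stage-wise arguments above.
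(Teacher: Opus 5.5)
There is a genuine gap, and it sits exactly at the step you flag as the main obstacle.

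Near the start of stage $r$, your only certificate of membership is the convex hull of $y^{(r-1)}$ and the $\rho_r$-ball around $y^{(r)}$. That gives a margin of order $s\rho_r/(k+s)$, against a balanced-ordering error of order $C_{\mathrm{bal}}(q)/(k+s)$. Their ratio does not depend on $k$, so the preliminary rescaling by $\lambda$ cannot help. For the first $O(C_{\mathrm{bal}}(q)/\rho_r)$ steps of each stage, nothing in your argument shows $m/k\in P$.

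Neither remedy closes this. Appending ``exact multiples of a fixed rational direction'' must still be done one unit vector at a time, and those intermediate points are precisely the uncontrolled ones. The greedy rule is not shown to keep $m/k$ in $P$, since it faces the same small-$s$ problem. Moreover, the Hall inequalities of Lemma~\ref{lem:hall_Xdir} describe $\vec{\mathcal X}(S)$, not the arbitrary rational polytope $P$ of the statement.

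The missing idea is the local shape of $P\cap H_{T_r}$ at $y^{(r-1)}$. Since $P\subseteq\Delta_q$, the set $P\cap H_{T_{r-1}}=(P\cap H_{T_r})\cap\{x_{t_r}=0\}$ is a face of $P\cap H_{T_r}$ of dimension $r-2$, hence a facet. Because $y^{(r-1)}$ lies in its relative interior, no other facet passes through $y^{(r-1)}$. So there is $\eta>0$ such that every $x\in H_{T_r}$ with $x_{t_r}\ge0$ and $\|x-y^{(r-1)}\|_\infty\le\eta$ lies in $P$. Contrary to what your remark suggests, nearby points of $H_{T_r}$ fail to lie in $P$ only when $x_{t_r}<0$, which count vectors never produce.

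With this fact, your plain balanced ordering already works once $k$ is large. For $s\le C_{\mathrm{bal}}(q)/\rho_r$, the normalized point lies in $H_{T_r}$, has nonnegative coordinates, and is within $(s+C_{\mathrm{bal}}(q))/(k+s)\le\eta$ of $y^{(r-1)}$. For larger $s$, your segment margin beats the error.

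The paper uses the same fact in the cone over $P$. Every integral facet functional $\ell_j$ of the cone over $P\cap H_{T_r}$ that vanishes at $y^{(r-1)}$ vanishes on all of $\R^{T_{r-1}}$; being positive at $y^{(r)}$ and integral, it satisfies $\ell_j(e_{t_r})\ge1$. The paper therefore first appends a fixed number $\beta$ of copies of $e_{t_r}$, which lifts every $\ell_j$ to at least $\beta$. It then runs the balanced ordering toward $N'y^{(r)}$, whose bounded unnormalized deviation cannot bring any $\ell_j$ back to $0$ once $\beta$ is large enough.

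Aiming at $N'y^{(r)}$ also repairs a small inconsistency in your write-up. Appending $My^{(r)}$ to $ky^{(r-1)}$ does not end at a multiple of $y^{(r)}$, so the next stage does not start with $m/k=y^{(r)}$ exactly. The normalized endpoint, a rational point of $\rint_{H_{T_r}}(P\cap H_{T_r})$, could serve as the new base point instead.
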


In order to keep the argument simple, the proof proceeds in the cone over $P$ rather than in $P$ itself, which avoids  renormalization at each step. It then suffices to check that the path never crosses a facet of the cone.

\begin{proof}
Call a finite sequence of vectors in $\Z_{\ge0}^q$ a {\em unit step path} if each vector is obtained from the preceding one by adding a canonical basis vector. Let $\operatorname{cone}(P):=\{\lambda x:\lambda\ge0,\ x\in P\}$. Since $\bfo^\top x=1$ for every $x\in P$, a nonzero $m\in\Z_{\ge0}^q$ lies in $\operatorname{cone}(P)$ if and only if $m/(\bfo^\top m)\in P$. It therefore suffices to construct a unit step path in $\operatorname{cone}(P)$ that starts at $e_{t_1}$ and ends at $Qy$ for some $Q\in\N$ and some $y\in\rint(P)\cap\mathbb Q^q$. Indeed, the $k$th vector $m^{(k)}$ of such a path has coordinate sum $k$, so $m^{(k)}/k\in P$, and $\bfo^\top y=1$ gives $m^{(Q)}/Q=y$.

For $r\in[q]$, let $P_r:=P\cap H_{T_r}$ and $C_r:=\{\lambda x:\lambda\ge0,\ x\in P_r\}$, and identify $\R^{T_r}$ with the set of vectors in $\R^q$ supported in $T_r$. Since $P_r$ is a rational polytope of dimension $r-1$ in $H_{T_r}$, the cone $C_r$, which is generated by the vertices of $P_r$, is a rational polyhedral cone of full dimension in $\R^{T_r}$, and $C_r\subseteq\operatorname{cone}(P)$. Fix $y^{(r)}\in\rint_{H_{T_r}}(P_r)\cap\mathbb Q^q$. Then $y^{(r)}$ is an interior point of $C_r$ in $\R^{T_r}$. Moreover, $y^{(r)}_i>0$ for every $i\in T_r$: the sets $P_r\subseteq\Delta_q\cap H_{T_r}$ both have affine hull $H_{T_r}$, so $y^{(r)}$ lies in the relative interior of $\Delta_q\cap H_{T_r}$. Finally, $H_{T_1}=\{e_{t_1}\}$, so $y^{(1)}=e_{t_1}$.

We claim that, for each $2\leq r\leq q$, there is an integer $M_r$ with the following property: if $N\geq M_r$ is an integer with $Ny^{(r-1)}\in\Z^q$, then for every sufficiently large integer $N'$ with $N'y^{(r)}\in\Z^q$ {\em there is a unit step path} in $C_r$ from $Ny^{(r-1)}$ to $N'y^{(r)}$.

To prove the claim, fix $r$ and write $t:=t_r$, $y:=y^{(r-1)}$ and $z:=y^{(r)}$. Choose nonzero integral linear functionals $\ell_1,\dots,\ell_s$ on $\R^{T_r}$ such that $C_r=\{v\in\R^{T_r}:\ell_j(v)\ge0\text{ for all }j\}$. Since $z$ is an interior point of $C_r$, we have $\ell_j(z)>0$ for every $j$. Let $L:=\max_j\sum_{i\in T_r}|\ell_j(e_i)|$, so that $|\ell_j(w)|\leq L\|w\|_\infty$ for all $w\in\R^{T_r}$, and fix an integer $\beta>L\,C_{\mathrm{bal}}(q)$.

First, $\ell_j(e_t)\ge1$ whenever $\ell_j(y)=0$. Indeed, $\ell_j$ is nonnegative on $C_{r-1}$, because $C_{r-1}\subseteq C_r$, and it vanishes at $y$, which is an interior point of $C_{r-1}$ in $\R^{T_{r-1}}$. For every $w\in\R^{T_{r-1}}$ and every sufficiently small $\varepsilon>0$, both $y+\varepsilon w$ and $y-\varepsilon w$ lie in $C_{r-1}$, so $\ell_j(w)\ge0$ and $-\ell_j(w)\ge0$. Hence $\ell_j$ vanishes on $\R^{T_{r-1}}$, and since $z-z_te_t\in\R^{T_{r-1}}$,
$$
0<\ell_j(z)=z_t\,\ell_j(e_t).
$$
As $z_t>0$ and $\ell_j(e_t)$ is an integer, $\ell_j(e_t)\ge1$.

Now choose $M_r$ such that $M_r\,\ell_j(y)\geq \beta\bigl(1+|\ell_j(e_t)|\bigr)$ for every $j$ with $\ell_j(y)>0$. Let $N\geq M_r$ be an integer with $Ny\in\Z^q$, and put $a:=Ny+\beta e_t$. For $0\leq c\leq \beta$,
$$
\ell_j(Ny+ce_t)=N\ell_j(y)+c\,\ell_j(e_t)\ge
\begin{cases}
c & \text{if }\ell_j(y)=0,\\
\beta & \text{if }\ell_j(y)>0,
\end{cases}
$$
by the previous paragraph in the first case and by the choice of $M_r$ in the second. Hence $Ny,\,Ny+e_t,\,\dots,\,a$ is a unit path in $C_r$, and $\ell_j(a)\geq \beta$ for every $j$.

Let $N'$ be an integer with $N'z\in\Z^q$, large enough that $N'z_i>a_i$ for all $i\in T_r$ and $N'\ell_j(z)\geq \beta$ for all $j$; all sufficiently large such $N'$ qualify, because $z_i>0$ for $i\in T_r$ and $\ell_j(z)>0$ for all $j$. Put $b:=N'z$ and $d:=b-a$. Then $d\in\Z_{\ge0}^q$ is supported in $T_r$ and $|d|:=\bfo^\top d\ge1$, so Lemma~\ref{lem:balanced_prefix_rounding_aux} yields a unit path $0=u^{(0)},u^{(1)},\dots,u^{(|d|)}=d$ with
$$
\Bigl\|u^{(s)}-\frac{s}{|d|}\,d\Bigr\|_\infty\leq C_{\mathrm{bal}}(q)\qquad(0\leq s\leq |d|).
$$
Since $0\leq u^{(s)}\leq d$, each $u^{(s)}$ is supported in $T_r$. The point $a+(s/|d|)d$ is a convex combination of $a$ and $b$, and $\ell_j(a),\ell_j(b)\geq \beta$, so $\ell_j\bigl(a+(s/|d|)d\bigr)\geq \beta$. Therefore
$$
\ell_j\bigl(a+u^{(s)}\bigr)
\geq \beta-L\,\Bigl\|u^{(s)}-\frac{s}{|d|}\,d\Bigr\|_\infty
\geq \beta-L\,C_{\mathrm{bal}}(q)>0
$$
for all $j$ and $s$, that is, $a+u^{(0)},\dots,a+u^{(|d|)}$ is a unit path in $C_r$ from $a$ to $b$. Appending it to the unit path from $Ny$ to $a$ gives a unit path in $C_r$ from $Ny$ to $N'z$. This proves the claim.

It remains to concatenate these paths. Choose an integer $N_1\geq M_2$ (any $N_1\ge1$ if $q=1$); then $e_{t_1},2e_{t_1},\dots,N_1e_{t_1}$ is a unit path in $C_1$ ending at $N_1y^{(1)}$. For $r=2,\dots,q$ in turn, apply the claim with $N:=N_{r-1}$ to obtain a unit path in $C_r$ from $N_{r-1}y^{(r-1)}$ to $N_ry^{(r)}$, where $N_r$ is an integer with $N_ry^{(r)}\in\Z^q$ that we may take to satisfy $N_r\geq M_{r+1}$ if $r<q$. Concatenating these paths gives a unit path in $C_1\cup\dots\cup C_q\subseteq\operatorname{cone}(P)$ from $e_{t_1}$ to $N_qy^{(q)}$. Since $T_q=[q]$, we have $P_q=P$, and since $P$ is full-dimensional in $H_{[q]}$, the relative interior of $P$ in $H_{[q]}$ is $\rint(P)$. Thus $y^{(q)}\in\rint(P)\cap\mathbb Q^q$, and the first paragraph, applied with $Q:=N_q$ and $y:=y^{(q)}$, completes the proof.
\end{proof}

\begin{figure}[t]
\centering
\begin{tikzpicture}[>=Latex, every node/.style={font=\small}]
\coordinate (e1) at (0,0);
\coordinate (e2) at (6,0);
\coordinate (e3) at (3,5.19);
\draw[gray!45] (e1)--(e2)--(e3)--cycle;
\node[below left,gray!70] at (e1) {\footnotesize $e_{t_1}$};
\node[below right,gray!70] at (e2) {\footnotesize $e_{t_2}$};
\node[above,gray!70] at (e3) {\footnotesize $e_{t_3}$};
\coordinate (v2) at (4.8,0);
\coordinate (v3) at (4.4,2.6);
\fill[blue!8] (e1)--(v2)--(v3)--cycle;
\draw[blue!55,thick] (e1)--(v2)--(v3)--cycle;
\node[blue!55!black,font=\footnotesize] at (2.7,1.55) {$P\cap H_{T_3}$};
\draw[blue!70!black, very thick] (e1)--(v2);
\node[blue!70!black,font=\scriptsize] at (1.6,-0.35) {$P\cap H_{T_2}$};
\foreach \x in {0.30,0.55,0.80,1.05} \node[circle,fill=red!70!black,inner sep=0.9pt] at (\x,0) {};
\foreach \x in {1.35,1.70,2.05,2.40,2.75} \node[circle,fill=orange!85!black,inner sep=0.9pt] at (\x,0) {};
\node[circle,fill=black,inner sep=1.4pt,label={[font=\scriptsize]below:{$y^{(2)}$}}] at (3.05,0) {};
\foreach \p in {(3.12,0.12),(3.19,0.24),(3.26,0.36),(3.33,0.48)} \node[circle,fill=red!70!black,inner sep=0.9pt] at \p {};
\foreach \p in {(3.42,0.62),(3.52,0.78),(3.62,0.94),(3.72,1.10)} \node[circle,fill=orange!85!black,inner sep=0.9pt] at \p {};
\node[circle,fill=black,inner sep=1.6pt,label={[font=\scriptsize]right:{$y^{(3)}$}}] at (3.82,1.26) {};
\node[align=left,font=\scriptsize,anchor=west] at (-0.4,-1.05) {{\color{red!70!black}$\bullet$}\; additions of the new coordinate};
\node[align=left,font=\scriptsize,anchor=west] at (-0.4,-1.45) {{\color{orange!85!black}$\bullet$}\; balanced ordering toward the next interior point};
\end{tikzpicture}
\caption{The path built in Lemma~\ref{lem:integer_path}. Each time the support grows, finitely many copies of the new type first push the count vector clear of the old face; a balanced ordering then follows the segment to a large integer multiple of an interior point of the new face.}
\label{fig:seed_path}
\end{figure}

We now show that Condition D yields a {\em single} finite digraph $H$ on vertex set $[Q]$ whose subgraphs $H[k]:=H[\{1,\dots,k\}]$ induced by the first $k$ vertices all admit cycle covers, $1\leq k\leq Q$. Finding one copy of $H$ in $D_n$ therefore delivers a whole initial segment of the nested sequence at once.

\begin{lemma}[A finite digraph from Condition D]\label{lem:finite_prefix_template}
Assume Condition D holds. Then there exist $Q\in\N$, a point $y^\star\in\rint\vec{\mathcal X}(S)\cap\mathbb Q^q$ with full support, a directed graph $H$ on vertex set $[Q]$ and a map $\tau:[Q]\to U$ assigning a type to each vertex of $H$, such that: \textup{(i)} every edge $a\to b$ of $H$ is admissible, i.e., $(\tau(a)\to\tau(b))\in F$; \textup{(ii)} $H[k]$ admits a cycle cover for every $1\leq k\leq Q$; \textup{(iii) $H[1]$ is a single looped vertex; and \textup{(iv)} the type counts of $[Q]$ are $Qy^\star$, i.e., $|\tau^{-1}(u_i)|=Qy^\star_i$ for every $i$.}
\end{lemma}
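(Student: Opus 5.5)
The plan is to combine Lemma~\ref{lem:integer_path} with Proposition~\ref{prop:blowup_cover}, taking $H$ to be the complete blow-up of the skeleton along the integer path.

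\emph{Step 1: the integer path.} By Proposition~\ref{prop:realizable_supports}(ii), Condition D makes each $F_{T_r}=\vec{\mathcal X}(S)\cap H_{T_r}$ full-dimensional in $H_{T_r}$. Moreover, $P:=\vec{\mathcal X}(S)$ is a rational polytope, being the convex hull of the rational points $x^C$ (Lemma~\ref{lem:cycle_polytope_conv_cycles}). Lemma~\ref{lem:integer_path} then yields $Q$ and a unit-step path $m^{(1)}=e_{t_1},\dots,m^{(Q)}$ with $m^{(k)}/k\in\vec{\mathcal X}(S)$ for all $k$, and $y^\star:=m^{(Q)}/Q\in\rint\vec{\mathcal X}(S)\cap\mathbb Q^q$.

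\emph{Step 2: full support of $y^\star$.} By Condition C (Remark~\ref{rem:DimpliesCA}), the polytope has dimension $q-1$, so its relative interior lies in $\rint\Delta_q$. Alternatively, we can read this off the proof of Lemma~\ref{lem:integer_path}, where $y^{(q)}_i>0$ for all $i$.

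\emph{Step 3: the digraph $H$.} Define $\tau(k):=u_i$, where $m^{(k)}-m^{(k-1)}=e_i$, with $m^{(0)}:=0$. Thus vertex $k$ is the new vertex added at step $k$. Let $H$ be the complete blow-up $S[m^{(Q)}]$ on vertex set $[Q]$ with these types: $a\to b$ is an edge exactly when $(\tau(a)\to\tau(b))\in F$, loops included. Then:
\begin{itemize}
\item Property (i) holds by construction.
\item Property (iv) holds because the type counts of $[Q]$ are $m^{(Q)}=Qy^\star$.
\item For each $k$, $H[k]$ is exactly the complete blow-up $S[m^{(k)}]$, since the induced subgraph of a complete blow-up on a subset of copies is again the complete blow-up with the restricted counts. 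As $m^{(k)}/k\in\vec{\mathcal X}(S)$, Proposition~\ref{prop:blowup_cover} gives a cycle cover of $H[k]$, which is (ii).
\item For (iii), $m^{(1)}=e_{t_1}$ and $H[1]$ admits a cycle cover, so the single vertex carries a loop; equivalently, $t_1$ is looped by the case $r=1$ of D.
\end{itemize}

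The only mildly delicate point is confirming that Lemma~\ref{lem:integer_path} applies, namely rationality of $P$ and the face dimensions. Both follow directly from the earlier results, so no real obstacle is expected.
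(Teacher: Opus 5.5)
Your proof is correct and follows essentially the paper's route: you use Proposition~\ref{prop:realizable_supports} together with rationality of $\vec{\mathcal X}(S)$ to invoke Lemma~\ref{lem:integer_path}, read the types off the unit steps, and apply Proposition~\ref{prop:blowup_cover} to each prefix blow-up $S[m^{(k)}]$. The only difference is that the paper takes $E(H)$ to be the union of the chosen covers $C_k$ rather than all admissible edges, which keeps $H$ sparser (and $p_{\mathrm{temp}}$ larger in Lemma~\ref{lem:log_seed_from_finite_seed_path}); your full blow-up satisfies (i)--(iv) just as well, and your explicit rationality check fills in a hypothesis the paper uses tacitly.
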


\begin{proof}
By Proposition~\ref{prop:realizable_supports}, the support faces corresponding to the ordering of Condition D satisfy the hypotheses of Lemma~\ref{lem:integer_path} with $P=\vec{\mathcal X}(S)$. Let $m^{(1)},\dots,m^{(Q)}$ be the resulting path of integer points and define $\tau(k):=u_i$ when $m^{(k)}-m^{(k-1)}=e_i$, with the convention $m^{(0)}:=0$. For each $k$, the digraph on $[k]$ containing all admissible edges is isomorphic to $S[m^{(k)}]$, so it contains, owing to Proposition~\ref{prop:blowup_cover},  a cycle cover $C_k$. Put $E(H):=\bigcup_{k=1}^Q E(C_k)$; every edge of $C_k$ joins two vertices of $[k]$, so $H[k]$ contains $C_k$ and therefore admits a cycle cover, for every $k$. Since $m^{(1)}=e_{t_1}$ and $t_1$ has a loop by the case $r=1$ of Condition D, $C_1$ is a loop. Finally, $y^\star:=m^{(Q)}/Q$ belongs to  $\rint\vec{\mathcal X}(S)\cap\mathbb Q^q$, and it has full support because Condition D makes $\vec{\mathcal X}(S)$ full-dimensional in the simplex, so that no coordinate can vanish at a relative interior point.
\end{proof}

The following Corollary will be used to show that the conditions we exhibited are tight, and is essentially the converse of the previous Lemma.
\begin{corollary}\label{cor:D_blowup}
Condition D holds if and only if there exist $m\in\Z_{>0}^q$ and an ordering of the vertices of $S[m]$, beginning at a looped vertex, such that the subgraph induced by every prefix of this ordering admits a cycle cover.
\end{corollary}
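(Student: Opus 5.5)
The forward direction follows from Lemma~\ref{lem:finite_prefix_template}. Assume Condition D. That lemma gives $Q$, a type map $\tau:[Q]\to U$, and a digraph $H$ on $[Q]$ with admissible edges. Each prefix $H[k]$ admits a cycle cover, $H[1]$ is a single looped vertex, and the type counts of $[Q]$ are $m:=Qy^\star$. Since $y^\star$ has full support, $m\in\Z_{>0}^q$. Because every edge of $H$ is admissible, $H$ is a spanning subgraph of $S[m]$: identify vertex $k$ of $H$ with a vertex of $V_{\tau(k)}$. Every prefix of the ordering $1,\dots,Q$ in $S[m]$ then contains the corresponding prefix of $H$, so it inherits a cycle cover. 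The first vertex is looped in $H$, hence looped in $S[m]$.

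For the converse, fix $m\in\Z_{>0}^q$ and an ordering $w_1,\dots,w_K$ of the vertices of $S[m]$ as in the statement, with $K=\bfo^\top m$. Let $W_k:=\{w_1,\dots,w_k\}$ and let $T^{(k)}:=\pi(W_k)\subseteq U$ be the set of types appearing in $W_k$. Since $m>0$, we have $T^{(K)}=U$. Order the nodes of $U$ by first appearance: $t_r$ is the type of the first vertex whose type is not among $t_1,\dots,t_{r-1}$. Then $T_r=\{t_1,\dots,t_r\}$ equals $T^{(k_r)}$, where $k_r$ is the index of that first appearance. In particular $t_1$ is the type of $w_1$. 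Since $w_1$ is looped in $S[m]$ and loops in $S[m]$ come only from loops of $S$, the node $t_1$ is looped.

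It remains to check that every node of $S[T_r]$ lies on a directed cycle of $S[T_r]$. Take the cycle cover of $S[m][W_{k_r}]$. By Proposition~\ref{prop:blowup_cover} (or directly, as in Lemma~\ref{lem:cover_implies_Xdir}), the normalized type counts $x$ of $W_{k_r}$ lie in $\vec{\mathcal X}(S)$, and $\supp(x)=T^{(k_r)}=T_r$. Proposition~\ref{prop:realizable_supports}(i) then says every node of $S[T_r]$ lies on a directed cycle of $S[T_r]$. One can also see this directly: projecting a cover cycle through a vertex of type $u$ gives a closed walk in $S[T_r]$ through $u$, and such a walk contains a simple cycle through $u$. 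This is Condition D.

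The only subtle point is the bookkeeping in the converse. The prefixes of $S[m]$ have $K$ sizes, but Condition D needs only $q$ checkpoints, namely the first appearance of each type. Using exactly these prefixes makes the support of the type frequency vector equal to $T_r$. Everything else follows from the earlier results.
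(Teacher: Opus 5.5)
Your proof is correct and follows essentially the same route as the paper. The forward direction embeds the digraph $H$ of Lemma~\ref{lem:finite_prefix_template} into $S[Qy^\star]$. The converse orders types by first appearance and applies Proposition~\ref{prop:blowup_cover} and Proposition~\ref{prop:realizable_supports}(i) to the prefix ending at each first appearance; your extra details (positivity of $m$ from the full support of $y^\star$, and the closed-walk argument) only make explicit what the paper leaves implicit.
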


\begin{proof}
If Condition D holds, take $m:=Qy^\star$ from Lemma~\ref{lem:finite_prefix_template}; the blow-up $S[m]$ contains $H$ on the same vertex set and with the same ordering, hence inherits the cycle covers of all its prefixes, and its first vertex is looped. Conversely, suppose such an ordered blow-up exists, and list the types of vertices in the order in which they first appear. Consider the prefix that ends at the first vertex of the $r$th type to appear. Being an induced subgraph of a complete blow-up, that prefix is itself a complete blow-up, so by  Proposition~\ref{prop:blowup_cover}, its type frequency vector is in $\vec{\mathcal X}(S)$; its support is exactly the set of types of vertices seen so far. These $q$ points certify Condition D by Proposition~\ref{prop:realizable_supports}\textup{(i)}, the first one forcing $t_1\in L$.
\end{proof}

\subsection{The matching lemma}\label{ssec:dense_block_matching}

We turn to the probabilistic half. The setting is a random bipartite graph whose two sides are each partitioned into $q$ blocks (one block per node of $S$) with edges allowed only between blocks joined by an edge of $S$ (equivalently, for which $p_{ij}>0$). If the block sizes satisfy Hall's inequality, and do so with a margin proportional to the total size, then a {\em perfect matching exists with probability exponentially close to $1$}.
\begin{lemma}[Perfect matchings in random block bipartite graphs]\label{lem:dense_block_matching_combined}
Assume that $S=(U,F)$ satisfies $N^+(U)=U$, and fix $\delta,p_0\in(0,1]$. Then there exist $c>0$ and $k_0\in\N$, depending only on $\delta$, $p_0$ and $q$, such that the following holds for every $k\geq k_0$. Let $m\in\Z_{\ge0}^q$ satisfy $\bfo^\top m=k$ and
\begin{equation}\label{eq:ineqlemhall}
m_I\leq m_{N^+(I)}-\delta k\qquad\text{for every }\varnothing\neq I\subsetneq U,
\end{equation} 
and let $B=(L\sqcup R,E)$ be a random bipartite graph with block decompositions $L=\bigsqcup_iL_i$ and $R=\bigsqcup_jR_j$ satisfying $|L_i|=|R_i|=m_i$, with no edges from $L_i$ to $R_j$ when $(u_i\to u_j)\notin F$ and with the admissible edges present independently, each with probability at least $p_0$. Then
$$
\Pr(B\text{ has a perfect matching})\ge1-e^{-ck}.
$$
\end{lemma}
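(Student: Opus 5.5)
Our plan is a union bound over small Hall obstructions, using Lemma~\ref{lem:small_hall_witness} so that only obstructions of size at most $\lceil k/2\rceil$ need to be controlled. If $B$ has no perfect matching, there is a left set $X$ with $|N(X)|\le|X|-1$, or a right set $Y$ with $|N(Y)|\le |Y|-1$, of size $s\le\lceil k/2\rceil$. The right-side case is the left-side case for the reversed skeleton. The reversed skeleton also satisfies $N^+(U)=U$, since $N^-(U)=U$ follows from $N^+(U)=U$ together with \eqref{eq:ineqlemhall}. It also inherits \eqref{eq:ineqlemhall} with a margin: applying the inequality to $I'=U\setminus N^-(I)$, whose out-neighbourhood avoids $I$, gives $m_{N^-(I)}\ge m_I+\delta k$ for every $I$ with $N^-(I)\neq U$. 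Sets with $N^-(I)=U$ are handled separately in the counting below. So it suffices to bound left obstructions.

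Fix a left obstruction $X$ of size $s$ and let $I=\{i: X\cap L_i\neq\varnothing\}$. There are two cases.

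\emph{Case $I\neq U$.} Every vertex of $X$ has all its potential neighbours in $R_{N^+(I)}$, which has size $m_{N^+(I)}\ge m_I+\delta k\ge s+\delta k$. The event $|N(X)|\le s-1$ forces a set $Z\subseteq R_{N^+(I)}$ with $|Z|\ge \delta k+1$ receiving no edges from $X$.

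\emph{Case $I=U$ (or $N^-(I)=U$ on the right).} Here $X$ meets every block, so every right vertex is a potential neighbour. The missing set then has size $k-s+1\ge k/2$.

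In both cases, each $x\in X$ is adjacent to every $z\in Z$ admissibly: in the first case $Z\subseteq R_{N^+(I)}$ only guarantees that $z$ is adjacent to \emph{some} block of $I$, so we must refine. We will choose, for each $j\in N^+(I)$, the block $i\in I$ with $u_i\to u_j$. Then $Z$ missing $X$ implies that $Z\cap R_j$ misses $X\cap L_i$, for a total of at least
$$
\sum_j |Z\cap R_j|\cdot\min_i|X\cap L_i|
$$
forced non-edges. This is weak when some $X\cap L_i$ is tiny. This is the step I expect to be the main obstacle.

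The remedy we would try is to shrink $I$ to $I_0=\{i: |X\cap L_i|\ge \eta s\}$, with $\eta=\delta/(4q)$. The vertices of $X$ in the remaining blocks number at most $q\eta s\le\delta k/4$. Hence $|N(X)|\ge |N(X\cap L_{I_0})|$, and it suffices that $X\cap L_{I_0}$ sees all but $\delta k/2$ vertices of $R_{N^+(I_0)}$. Indeed, $m_{N^+(I_0)}\ge m_{I_0}+\delta k$ forces at least $|X|-1 - (m_{N^+(I_0)} - \delta k) \ldots$, so a missing set $Z$ of size at least $\delta k/2$ lies inside $R_{N^+(I_0)}$. Each $z\in Z$ then has at least $\eta s$ forced non-neighbours in $X$. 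The probability of this event is at most $(1-p_0)^{\eta s\,\delta k/2}$.

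The union bound is then routine. There are at most $\binom{k}{s}$ choices of $X$ and at most $2^k$ choices of $Z$. Summing over $s$ gives
$$
\sum_{s\ge1}\binom{k}{s}2^k e^{-c' s k}.
$$
For $s\ge s_0$ (a constant depending on $\delta$, $p_0$, $q$), this is at most $e^{-ck}$, using $\binom ks\le e^{s\log k}$ and $s k\gg k+s\log k$.

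Obstructions of constant size $s<s_0$ need a separate, cheaper count. In that case $Z$ must contain all but $s-1$ vertices of $R_{N^+(I_0)}$, so the number of choices of $Z$ is only $k^{s}$. Each $z\in Z$ misses some fixed vertex of $X$, giving $(1-p_0)^{\Omega(k)}$ against $k^{2s}$ choices. This is again $e^{-ck}$ for $k\ge k_0$.

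Combining the left and right cases gives $\Pr(\text{no perfect matching})\le e^{-ck}$ after adjusting $c$ and $k_0$.
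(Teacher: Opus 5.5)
Your argument is correct, and it differs from the paper's in how linear-size obstructions are controlled.

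\textbf{The paper's route.} The paper splits at $s\le\varepsilon k$. For small $X$ it uses only that each of the $\ge\alpha k$ right vertices outside $Y$ has \emph{one} admissible potential edge from $X$. It must then tune $\varepsilon$ so that the $\binom{k}{s}\binom{k}{s-1}$ choices are beaten, via $3\varepsilon\log(e/\varepsilon)<p_0\alpha/2$. For $\varepsilon k\le s\le\lceil k/2\rceil$ it normalizes and uses compactness and continuity of $\Psi$. This gives $\gamma k^2$ forced non-edges, but with an inexplicit $\gamma$.

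\textbf{Your route.} You discard the blocks of $X$ carrying fewer than $\eta s$ vertices, and let the margin $\delta k$ absorb the at most $\delta s/4$ discarded vertices. Every missed right vertex then loses at least $\eta s$ potential edges, giving $\Omega(\delta^2 sk/q)$ forced non-edges uniformly in $s$. This already beats the crude $4^k$ count of pairs $(X,Z)$ once $s\ge s_0=O(q/(p_0\delta^2))$. So the one-edge-per-vertex argument is needed only for bounded $s$, where the $k^{2s}$ entropy is trivially dominated by $(1-p_0)^{\Omega(\delta k)}$.

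\textbf{Comparison.} Your route is fully elementary, yields explicit constants, and needs no $\varepsilon$-tuning. The paper's is shorter to state, since positivity of $\Psi$ is a two-line check, but it gives no quantitative $\gamma$.

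\textbf{Fixes for the write-up.}
\begin{itemize}
\item Replace the garbled sentence by the actual computation: if $I_0\neq U$ and $|N(X)|\le s-1$, then $|R_{N^+(I_0)}\setminus N(X)|\ge m_{I_0}+\delta k-(s-1)\ge |X\cap L_{I_0}|+\delta k-s+1\ge\tfrac34\delta k+1$.
\item Make the case split on $I_0$ rather than $I$. For $I_0=U$, use $k-s+1\ge k/2$ together with $N^+(U)=U$.
\item Note that $I_0\neq\varnothing$, since some block of $X$ has at least $s/q>\eta s$ vertices.
\item For the reversal, justify $N^-(U)=U$ by applying \eqref{eq:ineqlemhall} to singletons when $q\ge 2$.
\item Also for the reversal, justify that $U\setminus N^-(I)$ is proper: $N^-(I)\neq\varnothing$ follows from $N^+(U)=U$.
\end{itemize}
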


The proof splits Hall obstructions into two categories. Small obstructions, of size at most $\varepsilon k$, are ruled out by the margin, which guarantees that linearly many right vertices outside the neighborhood of the obstruction could be joined to it; large obstructions, of linear size, are ruled out by a compactness argument showing that such an obstruction requires {\em quadratically} many admissible edges to be absent at once.

\begin{proof}

We first note  that the margin on right neighbors~\eqref{eq:ineqlemhall} in the statement implies the following margin on left neighbors:
$$
m_J\leq m_{N^-(J)}-\delta k\qquad(\varnothing\neq J\subsetneq U).
$$
Indeed, let $J$ be nonempty and proper. If $N^-(J)=U$, then~\eqref{eq:ineqlemhall}, applied to the subset $J$ of $U$, gives $m_J\leq m_{N^+(J)}-\delta k\leq k-\delta k=m_{N^-(J)}-\delta k$. Otherwise set $I:=U\setminus N^-(J)$, which is nonempty because $N^-(J)\neq U$, and proper  because $N^-(J)\neq\varnothing$: a node of $J$ with no incoming edge would not belong to $N^+(U)$. There is no edge of $S$ from $I$ to $J$, so $N^+(I)\subseteq U\setminus J$, whence $m_{N^+(I)}\leq k-m_J$. Since $m_I=k-m_{N^-(J)}$, applying~\eqref{eq:ineqlemhall} to $I$ gives $k-m_{N^-(J)}\leq k-m_J-\delta k$, which proves the claim.

Call a pair $(v,w)\in L_i\times R_j$ with $(u_i\to u_j)\in F$ an {\em admissible potential edge}: it is present in $B$ independently of the other pairs, with probability at least $p_0$. By Lemma~\ref{lem:small_hall_witness} it is enough to rule out left and right obstructions of size at most $\lceil k/2\rceil$. We treat the left ones; the right ones follow by the same argument on the transposed graph, using the margin on left neighbors. To this end,  let $X\subseteq L$ have size $s$ with $|N(X)|\leq s-1$, and choose $Y\subseteq R$ with $N(X)\subseteq Y$ and $|Y|=s-1$.

Fix a small $\varepsilon>0$, to be chosen below, and suppose first that $s\le\varepsilon k$. Let $I$ be the set of indices $i$ with $X\cap L_i\neq\varnothing$, so that $s\leq m_I$. If $I\neq U$, then
$$
m_{N^+(I)}-|Y|\geq m_I+\delta k-(s-1)\ge\delta k+1.
$$
If $I=U$, then $m_{N^+(I)}-|Y|=k-s+1\geq k/3$ for large $k$. Thus at least $\alpha k$ right vertices outside $Y$, where $\alpha:=\min\{\delta,1/3\}$, each have an admissible potential edge from $X$. Hence
$$
\Pr(N(X)\subseteq Y)\leq e^{-p_0\alpha k}.
$$
Choosing $\varepsilon$ so that $3\varepsilon\log(e/\varepsilon)<p_0\alpha/2$ and summing over all pairs $(X,Y)$ with $s\le\varepsilon k$ gives an upper bound $e^{-c_1k}$.

Now suppose instead that $\varepsilon k\leq s\le\lceil k/2\rceil$, so that the obstruction is of linear size. Set $x_i:=|X\cap L_i|$, $y_j:=|Y\cap R_j|$, and normalize
$$
\mu_i:=\frac{m_i}{k},\qquad a_i:=\frac{x_i}{k},\qquad b_j:=\frac{y_j}{k}.
$$
The number of admissible potential edges from $X$ to $R\setminus Y$, divided by $k^2$, is
$$
\Psi(\mu,a,b):=\sum_{(u_i\to u_j)\in F}a_i(\mu_j-b_j).
$$
The vectors $(\mu,a,b)$ range over the  compact set defined by the following inequalities: 
$$\mu\in\Delta_q,\quad \mu_I\leq \mu_{N^+(I)}-\delta\ \ (\varnothing\neq I\subsetneq U),\quad 0\leq a,b\leq \mu, \quad \varepsilon\le\bfo^\top a\le2/3, \quad \bfo^\top b\le\bfo^\top a.$$
On that set $\Psi$ is strictly positive. Indeed, if $\Psi=0$ and $I:=\{i:a_i>0\}$, then $b_j=\mu_j$ for every $j\in N^+(I)$. Consequently,
$$
\mu_{N^+(I)}=b_{N^+(I)}\le\bfo^\top b\le\bfo^\top a\leq \mu_I.
$$

For $I\neq U$ this contradicts the margin, and for $I=U$ it yields $1=\mu_{N^+(U)}\le2/3$, a contradiction. Since $\Psi$ is continuous and the set is compact and {\em independent of} $k$, we get $\Psi\ge\gamma>0$ uniformly. For fixed $X$ and $Y$, then, at least $\gamma k^2$ admissible potential edges must all be absent, so
$$
\Pr(N(X)\subseteq Y)\leq e^{-p_0\gamma k^2}.
$$
A union bound over the at most $4^k$ pairs $(X,Y)$ leaves $e^{-c_2k^2}$. Combining the four bounds (two regimes on each side) proves the lemma.
\end{proof}

 The following lemma provides the form of Lemma~\ref{lem:dense_block_matching_combined} that will be used in the proofs below.  In the sufficiency proofs below, we will exhibit nested sequences of vertex sets that depend on the sampled graph. For $\mathcal N$, the sequence is built from the type map $\pi$ alone. For $\mathcal S$ we split the edges of $D_n$ into {\em two independent layers} (Section~\ref{sec:sufficient_S}), we find the initial segment of the sequence using the first layer and then keep the second layer for the cycle covers, so that the sequence depends on $\pi$ together with the first layer. In both cases the data used to build the sequence is independent of the edges left over, and we write $\mathcal F$ for the $\sigma$-field they generate: given $\mathcal F$, the sequence and the types are fixed, while the remaining admissible edges are still independent. 

\begin{lemma}[Cycle covers along a fixed nested sequence]\label{lem:uniform_chain_realization}
Assume that Condition C holds. Let $K\Subset\rint\vec{\mathcal X}(S)$ and $p_0>0$. Then there exist $c,C>0$ and $k_0\in\N$ with the following property. Let $D$ be a random digraph whose vertices have types in $U$ and whose edges are all admissible, and let $\mathcal F$ be a $\sigma$-field for which the type map of $D$ and a nested sequence of vertex sets $J_M\subset\cdots\subset J_N$ with $|J_k|=k$ are measurable, and conditionally on which the admissible edges of $D$ are present independently, each with probability at least $p_0$. If $M\geq k_0$ and $x(J_k)\in K$ for every $M\leq k\leq N$, then
$$
\Pr\bigl(\exists k\in\{M,\dots,N\}:D[J_k]\text{ has no cycle cover}\mid\mathcal F\bigr)\leq Ce^{-cM}.
$$
\end{lemma}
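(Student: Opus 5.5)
The plan is to reduce the statement to Lemma~\ref{lem:dense_block_matching_combined}, applied once to each $k\in\{M,\dots,N\}$, and then take a union bound whose terms decay geometrically in $k$.

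\emph{Step 1: a uniform Hall margin on $K$.} Under Condition C, $\vec{\mathcal X}(S)$ is full-dimensional in $\{\bfo^\top x=1\}$. Moreover, $S$ is strongly connected, so $N^+(U)=U$ (for $q=1$ this requires a loop, which membership of $x^*$ in a nonempty polytope forces). Lemma~\ref{lem:hall_Xdir} therefore describes $\vec{\mathcal X}(S)$ inside $\Delta_q$ by the inequalities $\Phi_I\ge0$.

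\begin{itemize}
\item Every $\Phi_I$ is strictly positive on $\rint\vec{\mathcal X}(S)$. If $\Phi_I$ vanished at a relative interior point, the linear functional $\Phi_I$ would be nonnegative on a full-dimensional neighbourhood of that point in the affine hull, and hence constant on the affine hull. That constant would be $0$, so $\Phi_I\equiv0$ on $\vec{\mathcal X}(S)$.
\item This is impossible for nonempty proper $I$. Points of $\rint\vec{\mathcal X}(S)$ have full support, since the polytope is full-dimensional in $\Delta_q$ and relative interior points are then interior to the simplex. By Lemma~\ref{lem:proper_expand}, $N^+(I)\not\subseteq I$, so $N^+(I)\setminus I\neq\varnothing$. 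Writing $\Phi_I(x)=x_{N^+(I)\setminus I}-x_{I\setminus N^+(I)}$, the functional vanishing identically would force the indicator of $N^+(I)\setminus I$ to equal the indicator of $I\setminus N^+(I)$ modulo $\bfo$ on a full-dimensional affine set. That holds only if both sets are empty or their union is $U$.
\item The case where the union is $U$ needs care. In that case $I\cap N^+(I)=\varnothing$ and $I\cup N^+(I)=U$, and $\Phi_I\equiv 0$ would mean $x_{U\setminus I}=x_I$. This is a genuine affine constraint, which contradicts full dimension because its normal vector $\mathbf 1_{U\setminus I}-\mathbf 1_I$ is not a multiple of $\bfo$.
\end{itemize}

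So each of the finitely many $\Phi_I$ is positive on $\rint\vec{\mathcal X}(S)$. By continuity and compactness of $\overline K$, there is $\delta>0$ with $\Phi_I\ge\delta$ on $K$ for all $\varnothing\neq I\subsetneq U$.

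\emph{Step 2: apply the matching lemma for each $k$, conditionally on $\mathcal F$.} Fix $k\in\{M,\dots,N\}$ and let $m$ be the type counts of $J_k$. Since $x(J_k)\in K$, we get $m_I\le m_{N^+(I)}-\delta k$ for every nonempty proper $I$, which is exactly~\eqref{eq:ineqlemhall}. Given $\mathcal F$, the set $J_k$ and its types are fixed. The bipartite lift $B(D[J_k])$ then has blocks $L_i,R_i$ of size $m_i$, edges only between admissible blocks, and independent edges each present with probability at least $p_0$; loops appear as the pairs $v^{\rm out}v^{\rm in}$ and cause no difficulty. Lemma~\ref{lem:dense_block_matching_combined} applied with $(\delta,p_0)$ gives $c,k_0$ such that, for $k\ge k_0$, a perfect matching fails with conditional probability at most $e^{-ck}$. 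By Lemma~\ref{lem:cyclecover_matching} this is the probability that $D[J_k]$ has no cycle cover.

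\emph{Step 3: union bound.} Summing over $k\ge M\ge k_0$ gives
\[
\sum_{k\ge M}e^{-ck}\le \frac{e^{-cM}}{1-e^{-c}},
\]
so the statement holds with $C:=(1-e^{-c})^{-1}$.

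The only step requiring thought is Step 1, namely that full dimension (Condition C) is what makes every Hall functional strictly positive on the relative interior. Without C, some $\Phi_I$ could vanish identically on the polytope, and the margin required by~\eqref{eq:ineqlemhall} would fail. The rest is direct bookkeeping. One point to check is that conditioning on $\mathcal F$ leaves the edge law required by Lemma~\ref{lem:dense_block_matching_combined} intact; this holds because that lemma only needs independence with a lower bound $p_0$ on each admissible edge probability.
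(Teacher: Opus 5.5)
Your proposal is correct and follows the paper's proof: a uniform margin $\Phi_I\ge\delta$ on $K$ from Condition C and Lemma~\ref{lem:proper_expand}, then Lemma~\ref{lem:dense_block_matching_combined} for each $k$ conditionally on $\mathcal F$, then a geometric union bound. One small correction: for $q=1$, the loop (hence $N^+(U)=U$) follows from $\vec{\mathcal X}(S)\neq\varnothing$, which Condition C itself guarantees, not from membership of $x^*$, which is not a hypothesis of this lemma.
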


\begin{proof}
We first produce a uniform margin on $K$. If $q=1$ there is nothing to prove and any $\delta>0$ will do. If $q\ge2$, fix a nonempty proper $I\subsetneq U$. The Hall functional $\Phi_I(x)=x_{N^+(I)}-x_I$ is nonnegative on $\vec{\mathcal X}(S)$ by Lemma~\ref{lem:hall_Xdir}, and it is not identically zero there: by Lemma~\ref{lem:proper_expand} the vector $\mathbf 1_{N^+(I)}-\mathbf 1_I$ is nonzero and is not a multiple of $\bfo$, while $\dim\vec{\mathcal X}(S)=q-1$ (Condition C) makes the affine hull of the polytope the whole hyperplane $\{\bfo^\top x=1\}$. A nonnegative linear functional that is not identically zero on a convex set is {\em strictly} positive on its relative interior, so $\Phi_I>0$ on $\rint\vec{\mathcal X}(S)$. Since $K$ is compact and there are finitely many subsets $I$, there is $\delta>0$ with $\Phi_I(x)\ge\delta$ for all $x\in K$ and all such $I$.

Now fix $k$ with $M\leq k\leq N$ and let $m$ be the vector of type counts of $J_k$. Conditional on $\mathcal F$, the bipartite lift of $D[J_k]$ is a random block bipartite graph with block sizes $m$, and these block sizes satisfy the hypothesis~\eqref{eq:ineqlemhall} of Lemma~\ref{lem:dense_block_matching_combined}: since $x(J_k)=m/k$ lies in $K$, the previous paragraph gives $\Phi_I(m/k)\ge\delta$ for every nonempty proper $I\subsetneq U$, which implies $m_I\leq m_{N^+(I)}-\delta k$. 

Note that the hypothesis $N^+(U)=U$ of Lemma~\ref{lem:dense_block_matching_combined} holds here: indeed, for $q\ge2$ it follows from strong connectivity of $S$, and for $q=1$, Condition C gives $\vec{\mathcal X}(S)\neq\varnothing$, which forces a loop at the unique node of $S$. Lemma~\ref{lem:dense_block_matching_combined}, applied with the  $\delta$ obtained above and with $p_0$, bounds the probability that the lift has no perfect matching by $e^{-ck}$, with $c$ and $k_0$ depending only on $\delta$, $p_0$ and $q$. By Lemma~\ref{lem:cyclecover_matching}, $D[J_k]$ has a cycle cover if and only if its bipartite lift has a perfect matching, so $\Pr\bigl(D[J_k]\text{ has no cycle cover}\mid\mathcal F\bigr)\leq e^{-ck}$ as well, and a union bound gives
$$
\sum_{k=M}^N e^{-ck}\leq Ce^{-cM},
$$ as claimed.
\end{proof}

\section{Necessary conditions}\label{sec:necessary}

We now prove the necessity halves of Theorems~\ref{thm:maincondN} and~\ref{thm:maincondS}, using the characterization of $\vec{\mathcal X}(S)$ given in  Section~\ref{sec:preliminaries}. 
%\subsection{Anti-concentration}\label{ssec:anti_conc_lemma}

 The next lemma is elementary; it shows that a multinomial random variable does not concentrate on any affine hyperplane, other than the hyperplane $\bfo^\top y=n$ on which it always lies.
\begin{lemma}\label{lem:anti_conc}
Let $N=(N_1,\dots,N_q)\sim\mathrm{Mult}(n;x^*)$ with $x_i^*>0$ for every $i$, and let $a\in\R^q$ not be a multiple of $\bfo$. Then there exists $C=C(a,x^*)<\infty$ such that
$$
\sup_{t\in\R}\ \Pr\bigl(a^\top N=t\bigr)\ \le\ \frac{C}{\sqrt n}.
$$
\end{lemma}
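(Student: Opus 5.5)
Since $a$ is not a multiple of $\bfo$, there exist indices $i\neq j$ with $a_i\neq a_j$. The plan is to condition on everything except the split of vertices between types $i$ and $j$, and to use a local-limit (anti-concentration) bound for a single binomial variable.

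First, condition on $M:=N_i+N_j$ and on the vector $(N_l)_{l\notin\{i,j\}}$. Given this data, $N_i\sim\Bin(M,\rho)$ with $\rho:=x_i^*/(x_i^*+x_j^*)\in(0,1)$, and $N_j=M-N_i$. Hence
$$
a^\top N=(a_i-a_j)N_i+\text{(a quantity fixed by the conditioning)}.
$$
Because $a_i-a_j\neq0$, the event $\{a^\top N=t\}$ pins $N_i$ to at most one integer value. Its conditional probability is therefore at most $\max_r\Pr(\Bin(M,\rho)=r)$.

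Next, use the standard bound $\max_r\Pr(\Bin(M,\rho)=r)\le C_\rho/\sqrt{M+1}$. This follows from Stirling's formula, or from the fact that the mode probability of a binomial decreases like $(2\pi M\rho(1-\rho))^{-1/2}$; when $M=0$ the bound is trivially $1$.

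Finally, remove the conditioning. Since $M\sim\Bin(n,x_i^*+x_j^*)$, Hoeffding's inequality gives $\Pr\bigl(M<\tfrac12(x_i^*+x_j^*)n\bigr)\le e^{-cn}$. Therefore
$$
\Pr(a^\top N=t)\le \frac{C_\rho}{\sqrt{(x_i^*+x_j^*)n/2}}+e^{-cn}\le \frac{C}{\sqrt n},
$$
uniformly in $t$. The constant $C$ depends only on $a$ (through the choice of $i,j$) and on $x^*$.

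The only step needing care is the uniform binomial point-probability bound, and it is classical; positivity of $x_i^*$ and $x_j^*$ is exactly what ensures $\rho\in(0,1)$ and that $M$ grows linearly in $n$.
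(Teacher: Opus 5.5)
Your proposal is correct and follows the paper's proof essentially step for step. You pick $i\neq j$ with $a_i\neq a_j$, condition on the other counts so that $N_i\sim\Bin(M,\rho)$, bound the largest binomial atom by $C_\rho/\sqrt{M}$, and remove the conditioning with Hoeffding's inequality on $M$. Writing $\sqrt{M+1}$ to cover $M=0$ is a small tidying of the same argument.
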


\begin{proof}
Since $a$ is not a multiple of $\bfo$, we may fix $i\neq j$ with $a_i\neq a_j$. Condition on the counts $(N_\ell)_{\ell\neq i,j}$. This determines $M:=N_i+N_j=n-\sum_{\ell\neq i,j}N_\ell$, and conditionally $N_i\sim\Bin(M,p)$ with $p:=x_i^*/(x_i^*+x_j^*)\in(0,1)$, by the standard conditioning property of the multinomial distribution. Letting $N_j=M-N_i$ in the linear form gives $a^\top N=c+(a_i-a_j)N_i$, where $c$ is fixed by the conditioning; since $a_i\neq a_j$, each $t\in\R$ leaves at most one possible value of $N_i$, and thus
$$
\Pr\bigl(a^\top N=t\ \big|\ (N_\ell)_{\ell\neq i,j}\bigr)\ \le\ \max_{r}\ \Pr\bigl(\Bin(M,p)=r\bigr)\ \le\ \frac{C_p}{\sqrt{M}},
$$ the last step being the standard bound on the largest atom of a binomial distribution with fixed success probability $p\in(0,1)$.

Finally, $M\sim\Bin(n,x_i^*+x_j^*)$ unconditionally, so by Hoeffding's inequality $\Pr\bigl(M<\tfrac12(x_i^*+x_j^*)n\bigr)\leq e^{-cn}$ for some $c>0$. Splitting on this event,
$$
\Pr\bigl(a^\top N=t\bigr)\ \le\ \frac{C_p}{\sqrt{\tfrac12(x_i^*+x_j^*)n}}+e^{-cn},
$$ uniformly in $t$, which proves the claim.
\end{proof}

\subsection{Three necessary conditions}\label{ssec:three_necessary}

We now address the  necessity part of both Theorems~\ref{thm:maincondN} and~\ref{thm:maincondS}: if either A, B or C is not met, then the probabilities of both $\mathcal N$ and $\mathcal S$ tend to zero asymptotically.

\begin{theorem}[Necessary conditions]\label{thm:necessary}
Let $W$ be a directed step-graphon with skeleton $S$ and concentration vector $x^*$. Let $D_n\sim W$. If either
\begin{enumerate}
\item $x^*\notin\vec{\mathcal X}(S)$, or
\item $\dim \vec{\mathcal X}(S)\leq q-2$, or
\item $\Lambda=\sum_i x_i^*p_{ii}=0$,
\end{enumerate} then $\lim_{n\to\infty}\Pr(\mathcal N)\ =\ \lim_{n\to\infty}\Pr(\mathcal S)\ =\ 0.$
\end{theorem}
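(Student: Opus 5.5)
Since $\mathcal S\Rightarrow\mathcal N$ for every fixed digraph, $\Pr(\mathcal S)\le\Pr(\mathcal N)$, so it suffices to show $\Pr(\mathcal N)\to0$ in each of the three cases. In every case the key input is the $k=n$ instance of $\mathcal N$. The only induced subdigraph on $n$ vertices is $D_n$ itself, so $\mathcal N$ forces $D_n$ to admit a cycle cover. By Lemma~\ref{lem:cover_implies_Xdir}, this gives $x=x(V)\in\vec{\mathcal X}(S)$. Thus
$$
\Pr(\mathcal N)\le\Pr\bigl(x\in\vec{\mathcal X}(S)\bigr).
$$
For case (3) we use instead the $k=1$ instance.

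\emph{Case (1).} $\vec{\mathcal X}(S)$ is a polytope, hence closed, and $x^*\notin\vec{\mathcal X}(S)$, so $\varepsilon:=\dist_2(x^*,\vec{\mathcal X}(S))>0$. Then $x\in\vec{\mathcal X}(S)$ forces $\|x-x^*\|_2\ge\varepsilon$. Lemma~\ref{lem:multinomial_conc} bounds the probability of this by $2qe^{-cn}\to0$.

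\emph{Case (2).} Since $\dim\vec{\mathcal X}(S)\le q-2$, the affine hull of $\vec{\mathcal X}(S)$ is a proper affine subspace of the hyperplane $\{\bfo^\top x=1\}$. We may assume the polytope is nonempty, since otherwise the probability is $0$. Then there are $a\in\R^q$, not a multiple of $\bfo$, and $t_0\in\R$ with $a^\top y=t_0$ for all $y\in\vec{\mathcal X}(S)$. To get such an $a$, pick a linear functional vanishing on the direction space of the affine hull but not on the direction space of the hyperplane, which is possible since the former is strictly smaller; this $a$ is not a multiple of $\bfo$. Consequently $x\in\vec{\mathcal X}(S)$ implies $a^\top N=nt_0$ with $N=nx\sim\mathrm{Mult}(n;x^*)$. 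Since $x^*$ has full support by Definition~\ref{def:dir_stepgraphon}, Lemma~\ref{lem:anti_conc} gives $\Pr(a^\top N=nt_0)\le C/\sqrt n\to0$.

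\emph{Case (3).} $\mathcal N$ with $k=1$ requires a one-vertex induced subdigraph with a cycle cover, i.e.\ a looped vertex. By Lemma~\ref{lem:binomial_loops}, the number of looped vertices is $L_n\sim\Bin(n,0)$, so it is $0$ almost surely and $\Pr(\mathcal N)=0$ for every $n$. (Because all $x_i^*>0$, $\Lambda=0$ means $p_{ii}=0$ for all $i$, so there are no loops at all.)

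None of these steps is a real obstacle. The only point needing care is the linear algebra in case (2): producing an $a$ that is constant on the polytope but is not a multiple of $\bfo$, which is exactly what Lemma~\ref{lem:anti_conc} requires.
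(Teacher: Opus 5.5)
Your proposal is correct and follows the paper's proof essentially step for step: Cases 1 and 2 reduce to the necessity of an $n$-cycle cover via Lemma~\ref{lem:cover_implies_Xdir}, then use concentration (Lemma~\ref{lem:multinomial_conc}) and anti-concentration (Lemma~\ref{lem:anti_conc}) respectively, and Case 3 uses the almost-sure absence of loops (Lemma~\ref{lem:binomial_loops}). The only cosmetic difference is that you pass from $\mathcal S$ to $\mathcal N$ through $\Pr(\mathcal S)\le\Pr(\mathcal N)$, whereas the paper observes directly that both properties require an $n$-cycle cover and a loop.
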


\begin{proof}
Let $\mathcal E_n$ be the event that $D_n=(V,\vec E)$ admits an $n$-cycle cover (a {\em Hamiltonian decomposition} in the terminology of~\cite{belabbas2013sparse,belabbas2021h}). By Lemma~\ref{lem:cover_implies_Xdir} with $V'=V$, $\mathcal E_n$ implies $x(V)\in\vec{\mathcal X}(S)$. Since both $\mathcal N$ and $\mathcal S$ require an $n$-cycle cover, it suffices\red{\,}\blue{,} in Cases 1 and 2, to show $\Pr(\mathcal E_n)\to 0$.

\medskip
\xc{Case 1: $x^*\notin\vec{\mathcal X}(S)$.}
Since $\vec{\mathcal X}(S)$ is closed and $x^*\notin\vec{\mathcal X}(S)$, there exists an open neighborhood $\mathcal U$ of $x^*$ in $\R^q$ such that $\mathcal U\cap\vec{\mathcal X}(S)=\varnothing$. By Lemma~\ref{lem:multinomial_conc}, $\Pr(x\in\mathcal U)\to 1$, hence $\Pr(\mathcal E_n)\leq \Pr(x\in\mathcal U^c)\to 0$.

\medskip
\xc{Case 2: $\dim \vec{\mathcal X}(S)\leq q-2$.}
If $\vec{\mathcal X}(S)=\varnothing$, then Case~1 already applies, so assume $\vec{\mathcal X}(S)\neq\varnothing$. The affine hull $\aff(\vec{\mathcal X}(S))$ is then a proper affine subspace of the hyperplane $\{\bfo^\top x=1\}$, so it satisfies at least one additional constraint: there are $a\in\R^q$, not a multiple of $\bfo$, and $b\in\R$ with
$$
a^\top x=b\qquad\text{for every }x\in\vec{\mathcal X}(S).
$$
Let $N=nx$ denote the vector of type counts, which is $\mathrm{Mult}(n;x^*)$  by Lemma~\ref{lem:multinomial_conc}, with $x^*_i>0$ for every $i$.  Consequently by Lemma~\ref{lem:anti_conc} we have 
$$
\Pr(\mathcal E_n)\ \le\ \Pr\bigl(a^\top N=nb\bigr)\ \le\ \frac{C}{\sqrt n}\ \longrightarrow\ 0.
$$

\medskip
\xc{Case 3: $\Lambda =0$.} If $\Lambda=0$, then $L_n=0$ almost surely by Lemma~\ref{lem:binomial_loops}. Hence $D_n$ has no loop and therefore no $1$-cycle cover; both $\mathcal N$ and $\mathcal S$ fail almost surely.
\end{proof}

\subsection{The boundary case: $x^*\in\partial\vec{\mathcal X}(S)$}\label{ssec:boundary_case}

We now show that if Condition B holds but $x^*$ lies on the {\em boundary} of the cycle polytope, then $\Pr(\mathcal N)$ cannot tend to $1$. This complements Theorem~\ref{thm:necessary} and explains the gap between B and $\mathrm{B}^*$ in our main theorems. See~\cite{gao2025h} for a more detailed analysis of the symmetric case.

\begin{proposition}\label{prop:rint_necessary_prob1}
Assume Condition C holds. If $x^*\in\vec{\mathcal X}(S)\setminus\rint(\vec{\mathcal X}(S))$, then
$$
\limsup_{n\to\infty}\Pr\bigl(D_n\text{ admits an $n$-cycle cover}\bigr)\leq \frac{1}{2}.
$$
\end{proposition}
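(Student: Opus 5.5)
Since $x^*$ lies on the relative boundary and Condition C makes $\vec{\mathcal X}(S)$ full-dimensional in the hyperplane $\{\bfo^\top x=1\}$, some facet-defining inequality is tight at $x^*$. By Lemma~\ref{lem:hall_Xdir}, $\vec{\mathcal X}(S)$ is cut out of $\Delta_q$ by the inequalities $\Phi_I\ge0$ together with $x_i\ge0$. Every coordinate $x^*_i$ is positive (Definition~\ref{def:dir_stepgraphon}), so the tight inequality must be a Hall inequality: there is a nonempty proper $I\subsetneq U$ with $\Phi_I(x^*)=x^*_{N^+(I)}-x^*_I=0$. (If $q=1$, the polytope is a single point, its relative interior is the whole polytope, and there is nothing to prove.) Set $a:=\mathbf 1_{N^+(I)}-\mathbf 1_I$. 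By Lemma~\ref{lem:proper_expand}, $a$ is nonzero, and it is not a multiple of $\bfo$ because its entries lie in $\{-1,0,1\}$ and it vanishes on $I\cap N^+(I)$ or takes both signs. I will make this step precise using the fact that $N^+(I)\neq I$ and $I\neq U$.

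By Lemma~\ref{lem:cover_implies_Xdir} and Lemma~\ref{lem:hall_Xdir}, the existence of an $n$-cycle cover forces $x(V)\in\vec{\mathcal X}(S)$, hence $a^\top N\ge0$, where $N\sim\mathrm{Mult}(n;x^*)$ is the vector of type counts. So it suffices to show $\limsup_n\Pr(a^\top N\ge0)\le 1/2$. Since $a^\top x^*=0$, the variable $a^\top N$ has mean $0$ and variance $n\sigma^2$, with $\sigma^2=\sum_i a_i^2x_i^*-(a^\top x^*)^2=\sum_i a_i^2x^*_i>0$ because $a\neq0$ and $x^*$ has full support. By the multivariate central limit theorem (a sum of $n$ i.i.d.\ bounded variables $a_{\pi(v)}$), $a^\top N/\sqrt n$ converges in distribution to $\mathcal N(0,\sigma^2)$. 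Therefore $\Pr(a^\top N>0)\to1/2$, and Lemma~\ref{lem:anti_conc} gives $\Pr(a^\top N=0)\le C/\sqrt n\to0$. Combining the two yields $\limsup\le 1/2$.

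The only delicate point is the first step: showing that a boundary point of a full-dimensional polytope, with all coordinates strictly positive, lies on a tight Hall inequality whose normal vector $a$ is not parallel to $\bfo$. For the tightness, I will argue that if all the finitely many inequalities $\Phi_I\ge0$ were strict at $x^*$, then a neighborhood of $x^*$ in the hyperplane $\{\bfo^\top x=1\}$ would lie in $\vec{\mathcal X}(S)$, contradicting $x^*\notin\rint$. For the non-parallelism, $a\parallel\bfo$ with $a\neq 0$ would force $a=\pm\bfo$, meaning $N^+(I)=U$ with $I=\varnothing$, or $I=U$ with $N^+(I)=\varnothing$; both are excluded. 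The remaining steps (CLT and atom bound) are routine.
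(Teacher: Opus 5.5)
Your proof is correct, and its structure matches the paper's. A Hall inequality $x_I\le x_{N^+(I)}$ must be tight at $x^*$ because every coordinate of $x^*$ is positive. An $n$-cycle cover forces the type counts to satisfy that inequality. What remains is to bound a one-sided event for the mean-zero statistic $a^\top N=\sum_{\ell}a_{\pi(v_\ell)}$.

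The routes differ only in that last step. The paper exploits the fact that $a$ has entries in $\{-1,0,1\}$. Each summand then takes values in $\{-1,0,1\}$ with mean zero, so $\Pr(\text{summand}=1)=\Pr(\text{summand}=-1)$, and $a^\top N$ is \emph{exactly} symmetric. This gives the identity $\Pr(a^\top N\ge0)=\frac12+\frac12\Pr(a^\top N=0)$ for every $n$, and Lemma~\ref{lem:anti_conc} then yields the explicit finite-$n$ bound $\frac12+O(n^{-1/2})$ without any limit theorem.

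You instead invoke the central limit theorem. This needs only $a\neq0$, so that $\sigma^2=\sum_i a_i^2x_i^*>0$, and it would work for any tight facet functional vanishing at $x^*$, not just Hall functionals with $\{-1,0,1\}$ coefficients. The price is that you get only the limiting statement; a rate would need Berry--Esseen.

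In your route, Lemma~\ref{lem:anti_conc} is actually superfluous. The Gaussian limit has no atom at $0$, so the portmanteau theorem already gives $\Pr(a^\top N\ge0)\to\frac12$ directly.
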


\begin{proof}
Since $x^*\in\vec{\mathcal X}(S)\setminus\rint(\vec{\mathcal X}(S))$ and $\dim \vec{\mathcal X}(S)=q-1$, the point $x^*$ lies on the boundary of a full-dimensional polytope inside $\{x:\bfo^\top x=1\}$. Moreover, $x_i^*>0$ for all $i\in[q]$ by Definition~\ref{def:dir_stepgraphon}, so $x^*$ lies on no coordinate facet $\{x_i=0\}$. By Lemma~\ref{lem:hall_Xdir}, $\vec{\mathcal X}(S)$ is cut out of $\{\bfo^\top x=1\}$ by the inequalities $x\ge0$ and $x_I\leq x_{N^+(I)}$, so a boundary point satisfies one of them with equality. Hence there exists a nonempty proper subset $\varnothing\neq I\subsetneq U$ with $N^+(I)\neq I$ (by Lemma~\ref{lem:proper_expand}) such that $x^*_I = x^*_{N^+(I)}$.

Let
$$
\alpha_i = \mathbf 1_{\{i\in I\}} - \mathbf 1_{\{i\in N^+(I)\}},
\qquad
N=(N_1,\dots,N_q)\sim \mathrm{Mult}(n\red{,}\blue{;}x^*),
$$ and define $Z_n:= N_I - N_{N^+(I)}=\alpha^\top N.$ Since $N^+(I)\neq I$, the vector $\alpha$ is not a multiple of $\bfo$.

If $D_n$ admits an $n$-cycle cover, then $x(V)\in\vec{\mathcal X}(S)$ by Lemma~\ref{lem:cover_implies_Xdir}. Hence Lemma~\ref{lem:hall_Xdir} gives $N_I\leq N_{N^+(I)}$ and thus $Z_n\leq 0$. Therefore $$\Pr(\text{$D_n$ admits an $n$-cycle cover})\leq \Pr(Z_n\leq 0).$$

We claim that the distribution of $Z_n$ is symmetric about zero. To see this, let $\xi_\ell \in \{e_1,\ldots,e_q\}$, $1 \leq \ell \leq n$ be a random vector with $\xi_\ell = e_i$ if the $\ell$th draw in $N$ is a vertex of type $i$. Then, setting $Y_\ell:=\alpha^\top \xi_\ell$, we can write $Z_n=\sum_{\ell=1}^n Y_\ell$. Since $\alpha_i\in\{-1,0,1\}$, we have $Y_\ell\in\{-1,0,1\}$. Also, $\E Y_\ell=\alpha^\top x^*=x_I^*-x_{N^+(I)}^*=0,$ so $\Pr(Y_\ell=1)=\Pr(Y_\ell=-1).$ Hence each $Y_\ell$ is symmetric about $0$, and therefore so is $Z_n$; this proves the claim.

Appealing to Lemma~\ref{lem:anti_conc},
$$
\Pr(Z_n=0)\leq \frac{C}{\sqrt n}.
$$
Thus, using symmetry of $Z_n$ for the second equality, we get
$$
\Pr(Z_n\leq 0)= \Pr(Z_n <0)+\Pr(Z_n=0)=\frac{1}{2}+\frac{1}{2}\Pr(Z_n=0)
\leq \frac{1}{2}+O(n^{-1/2}).
$$
Taking $\limsup$ concludes the proof.
\end{proof}

\subsection{Necessity of Condition D}\label{ssec:necessity_D}

Condition D stems from a structural feature that any of the nested sequences certifying $\mathcal S$ displays: vertices are added one at a time, and the support of the type frequency vector grows monotonically, by at most one node at a time. Keeping track of this growth gives the ordering required by Condition D.

\begin{proposition}[Necessity of Condition D]
\label{prop:Dadm_necessary} Let $W$ be a directed step-graphon with strongly connected skeleton $S=(U,F)$ and concentration vector $x^*\in\Delta_q$. If $\limsup_{n\to\infty}\Pr(\mathcal S)>0$, then Condition D holds.
\end{proposition}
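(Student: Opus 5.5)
Since $\limsup_{n\to\infty}\Pr(\mathcal S)>0$, there is at least one $n$ and one sample $D_n$ (with positive probability) that has property $\mathcal S$. The statement is deterministic once we have such a sample, provided it contains vertices of every type. By Lemma~\ref{lem:multinomial_conc}, $\Pr(N_i\ge 1\ \forall i)\to1$. Hence for $n$ large enough along the subsequence, with positive probability $D_n$ has property $\mathcal S$ \emph{and} every type is represented. Fix such a realization together with a nested sequence $J_1\subset\cdots\subset J_n=V$ certifying $\mathcal S$, where $J_1=\{v_1\}$ is a looped vertex.

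List the vertices in the order $v_1,v_2,\dots,v_n$ in which they enter the nested sequence, i.e.\ $J_k=\{v_1,\dots,v_k\}$. Define the ordering $t_1,\dots,t_q$ of $U$ by the order in which types first appear along this list. This is well defined and exhausts $U$ because every type is represented. Let $k_r$ be the index of the first vertex of type $t_r$. Then $\supp x(J_{k_r})=T_r:=\{t_1,\dots,t_r\}$.

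Since $D_n[J_{k_r}]$ admits a cycle cover, Lemma~\ref{lem:cover_implies_Xdir} gives $x(J_{k_r})\in\vec{\mathcal X}(S)$. This point has support exactly $T_r$, so Proposition~\ref{prop:realizable_supports}(i) shows that every node of $S[T_r]$ lies on a directed cycle of $S[T_r]$. This holds for every $r=1,\dots,q$, which is Condition D. For $r=1$ the conclusion is consistent with $v_1$ being looped: the edge $v_1\to v_1$ is admissible, so $t_1\in L$. Alternatively, this argument is exactly the converse direction of Corollary~\ref{cor:D_blowup} applied to the blow-up $S[m]$ with $m=(N_1,\dots,N_q)\in\Z^q_{>0}$. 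Every cycle cover of $D_n[J_k]$ is one of the corresponding prefix of $S[m]$, since $D_n$ is a spanning subgraph of $S[m]$ via the type map, which is a homomorphism.

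The only step requiring care is ensuring that all types appear, so that the ordering covers all of $U$. This is handled by intersecting the positive-probability event $\mathcal S$ with the high-probability event that all $N_i\ge1$, using $x^*_i>0$ from Definition~\ref{def:dir_stepgraphon}. Everything else is a direct application of Lemma~\ref{lem:cover_implies_Xdir} and Proposition~\ref{prop:realizable_supports}(i).
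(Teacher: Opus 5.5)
Your proposal is correct and follows essentially the same argument as the paper. You restrict to a subsequence where $\Pr(\mathcal S)$ stays positive and intersect with the high-probability event that every type appears. You then order the types by first appearance along the certifying nested sequence, and apply Lemma~\ref{lem:cover_implies_Xdir} and Proposition~\ref{prop:realizable_supports}(i) to $x(J_{k_r})$, whose support is exactly $T_r$.
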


\begin{proof}
Let $N_i$ denote the number of vertices of type $u_i$ in $D_n\sim W$. Then $N_i\sim \mathrm{Bin}(n,x_i^*)$. Hence $\Pr(N_i=0)=(1-x_i^*)^n\to 0$ for every $1 \leq i \leq q$ and using a union bound,
$$
\Pr(N_i\geq 1\ \forall i\in[q])\to 1.
$$

Now assume $\limsup_{n\to\infty}\Pr(\mathcal S)>0$. Then there exists a subsequence $n_j\to\infty$ and a constant $\eta>0$ such that $ \Pr(D_{n_j}\text{ has property }\mathcal S)\geq \eta$ for all $j$. Combined with the previous equation, this gives, for all sufficiently large $j$,
$$
\Pr\bigl(\{D_{n_j}\text{ has property }\mathcal S\}\ \cap\ \{N_i(D_{n_j})\geq 1\ \forall i\in[q]\}\bigr)>0.
$$
Fix such a $j$ and a realization of $D_{n_j}$ in this event; in particular, vertices of every type appear in $D_{n_j}$. Let $J_1\subset J_2\subset\cdots\subset J_{n_j}=V(D_{n_j})$, with $ |J_k|=k$, be a nested sequence certifying property $\mathcal S$. Then it satisfies
\begin{enumerate}
\item $J_1$ is a single looped vertex;
\item $D_{n_j}[J_k]$ admits a cycle cover for every $k=1,\dots,n_j$.
\end{enumerate}

For each $k$, define $T(k):=\pi(J_k)\subseteq U.$ Because $J_{k+1}$ is obtained from $J_k$ by adding exactly one vertex, the support can increase by {\em at most} one node at a time: $|T(k+1)\setminus T(k)|\leq 1$ for all $k$. With a slight abuse of notation, we only keep in the sequence the distinct supports as they first appear: this gives a nested sequence $T_1\subset T_2\subset\cdots\subset T_m=U.$ Since $J_1$ consists of a single vertex, $T_1=\{t_1\}$ for some $t_1 \in L$; since supports increase by at most one coordinate at a time, $|T_r|=r$ for $r=1,\dots,m$; and since vertices of all types appear, $m=q$. Define the corresponding ordering of the nodes of $S$, namely $t_1,\ldots, t_q$ so that $ T_r=\{t_1,\dots,t_r\}$ for $ 1 \leq r \leq q$.

For each $r=1,\dots,q$, let $k_r$ be the first index at which the support $T_r$ appears: $T(k_r)=T_r$. Because $D_{n_j}[J_{k_r}]$ admits a cycle cover, Lemma~\ref{lem:cover_implies_Xdir} implies that $x(J_{k_r})\in \vec{\mathcal X}(S)$. Moreover, by definition of $T_r$,
$$
\supp\bigl(x(J_{k_r})\bigr)=\pi(J_{k_r})=T_r.
$$
Thus each $T_r$ is the support of a point of $\vec{\mathcal X}(S)$, so by Proposition~\ref{prop:realizable_supports}\textup{(i)} the ordering $t_1,\dots,t_q$ satisfies Condition D.
\end{proof}

\section{Sufficiency for $\mathcal N$}\label{sec:sufficient_N}
We now prove that Conditions A, $\mathrm{B}^*$ and C are sufficient for $\mathcal N$ to hold.  In the proof, the cycle covers on a small number of vertices are provided by loops and those on many vertices by unions of disjoint cycles of arbitrary lengths;  the cutoff between the two regimes is taken to be a fixed fraction of $n$.

\begin{theorem}[Sufficient conditions for $\mathcal N$]\label{thm:suff_N}
Let $W$ be a directed step-graphon with strongly connected skeleton $S$, concentration vector $x^*$, and block probabilities $(p_{ij})$. Assume Conditions A, ${B}^*$ and C hold. Then there exist constants $C_W,c_W>0$, depending only on $W$, such that $\Pr(\mathcal N)\ge1-C_We^{-c_Wn}$ for $D_n\sim W$ and all $n$; in particular, $\Pr(\mathcal N)\to 1$.
\end{theorem}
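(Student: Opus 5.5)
The proof splits the range of $k$ at a cutoff $M:=\lfloor\eta n\rfloor$, with $\eta>0$ small and fixed. Loops handle $k\le M$, and Lemma~\ref{lem:uniform_chain_realization}, applied along a balanced ordering, handles $k>M$.

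First fix the constants. By $\mathrm{B}^*$ and C, the point $x^*$ lies in the interior of $\vec{\mathcal X}(S)$ relative to the hyperplane $\{\bfo^\top x=1\}$. So there is $\rho>0$ with $K:=\mathrm B_\infty(x^*,2\rho)\cap\{\bfo^\top x=1\}\Subset\rint\vec{\mathcal X}(S)$. Take $p_0:=p_{\min}$ and obtain $c,C,k_0$ from Lemma~\ref{lem:uniform_chain_realization} for this $K$. Choose $\eta:=\Lambda/2$. We work on the good event $G$ on which $\|x-x^*\|_\infty\le\rho$ and the loop count satisfies $L_n\ge\eta n$. By Lemma~\ref{lem:multinomial_conc}, Lemma~\ref{lem:binomial_loops} and Hoeffding's inequality (using A), $\Pr(G^c)\le C'e^{-c'n}$.

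\emph{Small $k$.} On $G$ there are at least $M$ looped vertices. For $k\le M$, any $k$ of them induce a subdigraph whose loops form a cycle cover. This covers $k\in\{1,\dots,M\}$ deterministically.

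\emph{Large $k$.} Apply Lemma~\ref{lem:balanced_prefix_rounding_aux} to the global count vector $d=(N_1,\dots,N_q)$. This gives a type sequence whose prefix counts $u^{(k)}$ satisfy $\|u^{(k)}-\tfrac kn d\|_\infty\le C_{\mathrm{bal}}(q)$. Realize this sequence by listing the vertices of $V$ in the corresponding type order, breaking ties within a type arbitrarily but measurably in $\pi$. Let $J_k$ be the first $k$ vertices, so that $x(J_k)=u^{(k)}/k$. Then
$$\|x(J_k)-x^*\|_\infty\le \|x-x^*\|_\infty+C_{\mathrm{bal}}(q)/k\le\rho+C_{\mathrm{bal}}(q)/(\eta n).$$
This is at most $2\rho$ once $n$ is large, so $x(J_k)\in K$ for all $M\le k\le n$.

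Now take $\mathcal F:=\sigma(\pi)$. The sequence $(J_k)$ is $\mathcal F$-measurable. Conditionally on $\mathcal F$, the admissible edges of $D_n$ are independent, each present with probability $p_{ij}\ge p_{\min}$, and inadmissible edges are absent. Lemma~\ref{lem:uniform_chain_realization} then shows that, on $G$ and for $n$ with $M\ge k_0$, the probability that some $D_n[J_k]$ with $M\le k\le n$ has no cycle cover is at most $Ce^{-c\eta n/2}$. The events $\{x\in\cdot\}$ are $\mathcal F$-measurable, so conditioning on $G\cap\mathcal F$ is harmless. The loop part of $G$ does not matter for this step, since we only use a union bound.

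Adding up, $\Pr(\mathcal N^c)\le C'e^{-c'n}+Ce^{-c\eta n/2}$ for large $n$. Enlarging $C_W$ absorbs the finitely many small $n$.

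The main care point is the large-$k$ step. The balanced-ordering error $C_{\mathrm{bal}}/k$ must be uniformly small on the whole range $k\ge\eta n$. The conditional-independence hypothesis of Lemma~\ref{lem:uniform_chain_realization} must also be checked. Both hold because the ordering depends only on $\pi$, and because the cutoff $M$ is linear in $n$, which also makes the union-bound tail exponentially small.
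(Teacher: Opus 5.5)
Your proposal is correct and follows essentially the same route as the paper: the same cutoff $M=\lfloor\Lambda n/2\rfloor$, loops covering $k\le M$, a balanced ordering depending only on $\pi$ that keeps $x(J_k)$ in a compact neighbourhood of $x^*$ inside $\rint\vec{\mathcal X}(S)$ for $k\ge M$, and Lemma~\ref{lem:uniform_chain_realization} applied conditionally on $\pi$, followed by a union bound. The only differences are cosmetic: you intersect the ball with the hyperplane rather than with $\Delta_q$, and you use a radius $2\rho$ with slack $\rho$ where the paper uses $\varepsilon$ with slack $\varepsilon/2$.
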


\begin{proof}
From Condition C, the affine hull of $\vec{\mathcal X}(S)$ is equal to $\{\bfo^\top x=1\}$, so owing to Condition $\mathrm{B}^*$ we can choose $\varepsilon>0$ so that
$$
K:=\{y\in\Delta_q:\|y-x^*\|_\infty\le\varepsilon\}\Subset\rint\vec{\mathcal X}(S).
$$
Set $M:=\lfloor\Lambda n/2\rfloor$. From Condition A, we have  $\Lambda>0$, so by Lemma~\ref{lem:binomial_loops} and Hoeffding's inequality the number of looped vertices of $D_n$ is at least $M$ with probability at least $1-e^{-\Lambda^2n/2}$; on that event any $k$ of them  define an induced subgraph covered by $k$ loops, for every $1\leq k\leq M$.

For the remaining sizes $k>M$, we use Lemma~\ref{lem:balanced_prefix_rounding_aux} on the type counts $N=(N_1,\dots,N_q)$ to obtain an ordering $v_1,\dots,v_n$ of the vertices that depends only on the type map, whose prefixes $V_k:=\{v_1,\dots,v_k\}$ satisfy
$$
\|x(V_k)-x\|_\infty\le\frac{C_{\mathrm{bal}}(q)}{k}.
$$
By Lemma~\ref{lem:multinomial_conc} we have $\|x-x^*\|_\infty\le\varepsilon/2$ with probability at least $1-2qe^{-c_1n}$ for some $c_1>0$, and $C_{\mathrm{bal}}(q)/M\le\varepsilon/2$ for $n$  large. Let $\mathcal G$ be the former event, $n$ being large enough for the latter inequality; it is measurable with respect to the type map $\pi$, it has probability at least $1-2qe^{-c_1n}$, and on this event
$$
x(V_k)\in K\qquad(M\leq k\leq n).
$$
Now we condition on $\pi$ and work on the event $\mathcal G$. The sequence $(V_k)_{k\geq M}$ is then fixed, while every admissible edge of $D_n$ is  present independently with probability at least $p_{\min}$. Then, from Lemma~\ref{lem:uniform_chain_realization} we have
$$
\Pr\bigl(\exists k\geq M:D_n[V_k]\text{ has no cycle cover}\mid\pi\bigr)\leq Ce^{-cM}\leq Ce^{c}e^{-c\Lambda n/2}
$$ on $\mathcal G$, for $n$ large enough that $M\geq k_0$.

Putting the two parts together, we obtain, for every $k\in\{1,\dots,n\}$, a $k$-vertex induced subgraph of $D_n$ that admits a cycle cover,  with probability at least $1-e^{-\Lambda^2n/2}-2qe^{-c_1n}-Ce^{c}e^{-c\Lambda n/2}$ for $n$ large. This is the claimed bound, after enlarging $C_W$ to account for small $n$.
\end{proof}

Theorem~\ref{thm:maincondN} follows from Theorems~\ref{thm:necessary} and~\ref{thm:suff_N}.

\section{Sufficiency for $\mathcal S$}\label{sec:sufficient_S}

Property $\mathcal S$ requires a nested sequence of vertex sets whose induced subgraphs all have a cycle cover. Lemma~\ref{lem:finite_prefix_template} provides the beginning of the sequence, and Lemma~\ref{lem:uniform_chain_realization} the remainder. Each of these being probabilistic, we must ensure that they do not rely on the same edges, so that we can consider both events independently. To this end, we split the edge probabilities in two. For each ordered pair $(i,j)\in[q]^2$, set
$$
p^{(0)}_{ij}:=\frac{p_{ij}}{2},\qquad p^{(1)}_{ij}:=\frac{p_{ij}}{2-p_{ij}},
$$ 
and generate two random digraphs $D^{(0)}$ and $D^{(1)}$, the {\em layers}, on the vertex set of $D_n$ and with the same type map $\pi$: conditionally on $\pi$, each ordered pair of vertices of types $u_i$ and $u_j$ is an edge of $D^{(\ell)}$ with probability $p^{(\ell)}_{ij}$, independently over pairs and over $\ell\in\{0,1\}$. Since $(1-p_{ij})=(1-p^{(0)}_{ij})(1-p^{(1)}_{ij})$, {\em their union has the same law as $D_n$}. We define $\mathcal H_0:=\sigma(\pi,D^{(0)})$ and use the first layer for the first part of the sequence, and the second layer for the remainder. 

\begin{lemma}\label{lem:log_seed_from_finite_seed_path}
Assume Condition D holds, and let $(H,\tau)$, $Q$, and $y^\star$ be as in Lemma~\ref{lem:finite_prefix_template}. There exist $\beta,c_2>0$, depending only on $W$ and $(H,\tau)$, such that for all sufficiently large $n$, setting $b:=\lfloor\beta n\rfloor$ and $M:=Qb$, with probability at least $1-e^{-c_2n}$ there is an $\mathcal H_0$-measurable nested sequence
$$
J_1\subset\cdots\subset J_M,\qquad |J_k|=k,
$$ 
such that $J_1$ is a single looped vertex, every $D^{(0)}[J_k]$ admits a cycle cover, and $x(J_M)=y^\star$.
\end{lemma}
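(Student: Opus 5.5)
The plan is to find $b$ vertex-disjoint copies of the finite digraph $H$ inside the first layer $D^{(0)}$, with types matching $\tau$, and to interleave them into a single nested sequence. We then check that every prefix admits a cycle cover, with $J_1$ the looped first vertex of one copy. Since $x(J_M)$ should equal $y^\star$ and the $b$ copies contribute exactly $b\,Qy^\star$ type counts, we take $J_M$ to be the union of the copies.

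The first step is to produce the copies. Fix a constant $\beta>0$ small, say $\beta Q y^\star_i\le x_i^*/4$ for every $i$. On the event of Lemma~\ref{lem:multinomial_conc} that $\|x-x^*\|_\infty\le \min_i x^*_i/4$, which is $\pi$-measurable and has probability $1-2qe^{-c_1n}$, each type class has at least $\tfrac34 x_i^*n$ vertices. For each type $i$, split the type-$i$ vertices into $b$ disjoint groups of size $Qy^\star_i$ (plus leftovers). Each group $g$ is given the bijection $\sigma_g$ from $[Q]$ that respects $\tau$, chosen in some fixed deterministic way from $\pi$ (for instance, by vertex labels). We call group $g$ \emph{good} if every edge $a\to b$ of $H$ is present in $D^{(0)}$ between $\sigma_g(a)$ and $\sigma_g(b)$. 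Every such edge is admissible by Lemma~\ref{lem:finite_prefix_template}(i), and since the groups are disjoint, conditionally on $\pi$ the goodness events are independent, each with probability at least $\rho:=(p_{\min}/2)^{|E(H)|}>0$. It is simplest to prepare $2b/\rho$ candidate groups, which fit if $\beta$ is small. Hoeffding's inequality then gives at least $b$ good groups with probability $1-e^{-cn}$, and we keep the first $b$ of them in a fixed order. This choice is $\mathcal H_0$-measurable.

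The second step is the ordering, which is where the main care is needed. Concatenating the copies fails, because a prefix consisting of one full copy plus a partial second copy is fine, but the first copy alone only reproduces $H$'s prefixes. Concatenation in fact works: let $J_{(g-1)Q+k}$ be the union of the first $g-1$ good copies together with the first $k$ vertices (in $H$'s order) of copy $g$. Its induced subgraph in $D^{(0)}$ contains the disjoint union of $g-1$ copies of $H[Q]$ and one copy of $H[k]$, each with a cycle cover by Lemma~\ref{lem:finite_prefix_template}(ii). The union of these cycle covers is a cycle cover of $D^{(0)}[J_{(g-1)Q+k}]$, because having a cycle cover is preserved under adding edges and disjoint unions. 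The set $J_1$ is the image of vertex $1$ of the first copy, which is looped in $D^{(0)}$ by Lemma~\ref{lem:finite_prefix_template}(iii), since loop edges belong to $E(H)$ and were required to be present. Finally, $J_M$ has type counts $bQy^\star$ by (iv), so $x(J_M)=y^\star$.

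The main obstacle is ensuring that the events are measurable and independent as claimed. Specifically, the groups must be chosen from $\pi$ alone, and the edges tested must be disjoint across groups so that the binomial bound applies. Both hold by construction. Collecting the failure probabilities $2qe^{-c_1n}+e^{-cn}$ yields $c_2$.
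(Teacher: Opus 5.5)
Your proposal is correct and follows essentially the same route as the paper: disjoint $\tau$-respecting $Q$-tuples chosen from $\pi$ alone, independent realization of $H$ in $D^{(0)}$ giving a binomial count of good tuples, and concatenation of the first $b$ good copies in $H$'s label order so that each prefix is covered by full copies plus one copy of some $H[k]$. The differences are only cosmetic: you use $\lceil 2b/\rho\rceil$ candidate tuples and the lower bound $(p_{\min}/2)^{|E(H)|}$, where the paper uses all $\nu$ available tuples and the exact $p_{\mathrm{temp}}$. You should also delete your sentence claiming that concatenation fails, since your very next argument shows that it works.
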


\begin{proof}
Let $d_i:=|\tau^{-1}(u_i)|=Qy_i^\star>0$. Every coordinate of $x^*$ is positive by Definition~\ref{def:dir_stepgraphon}, so Lemma~\ref{lem:multinomial_conc} gives $N_i\geq x_i^*n/2$ for all $i$ with probability at least $1-2qe^{-c_1n}$ for some $c_1>0$. On this event, define
$$
\nu:=\left\lfloor\min_i\frac{N_i}{d_i}\right\rfloor\ge\lfloor\gamma n\rfloor,\qquad \gamma:=\min_i\frac{x_i^*}{2d_i}>0,
$$
% [orange comment addressed: "what is a type tuple? never defined; don't make up compound words randomly"]
and choose $\nu$ pairwise disjoint $Q$-tuples $\mathbf v^{(s)}=(v^{(s)}_1,\dots,v^{(s)}_Q)$, $1\leq s\le\nu$, of distinct vertices of $D_n$ such that $\pi(v^{(s)}_a)=\tau(a)$ for every $a\in[Q]$; that is, the $a$th vertex of each tuple has the type that $\tau$ assigns to the vertex $a$ of $H$. This is possible because there are $N_i\ge\nu d_i$ vertices of each type $u_i$. We make this choice as a function of $\pi$ only, for instance by taking the vertices of each type in increasing order of their index.
We say that $\mathbf v^{(s)}$ {\em realizes} $H$ in $D^{(0)}$ if $v^{(s)}_a\to v^{(s)}_c$ is an edge of $D^{(0)}$ for every edge $a\to c$ of $H$, loops included; in other words, if $a\mapsto v^{(s)}_a$ maps $H$ onto a (not necessarily induced) subgraph of $D^{(0)}$, preserving types. Conditional on $\pi$, each of these edges is present independently with probability $p_{\tau(a)\tau(c)}/2$, so each tuple realizes $H$ in $D^{(0)}$ with the fixed probability
$$
p_{\mathrm{temp}}:=\prod_{(a,c)\in E(H)}\frac{p_{\tau(a)\tau(c)}}{2}>0.
$$
Conditional on the type map, these events are independent: the event that $\mathbf v^{(s)}$ realizes $H$ depends only on the edges of $D^{(0)}$ between vertices of $\mathbf v^{(s)}$; since the tuples are pairwise disjoint, distinct tuples involve disjoint sets of ordered pairs of vertices; and, conditional on $\pi$, the edges of $D^{(0)}$ on distinct ordered pairs are independent (Definition~\ref{def:sampling}). Hence, conditional on $\pi$, the number $Z$ of tuples that realize $H$ is binomial with parameters $\nu$ and $p_{\mathrm{temp}}$, and its mean $\nu p_{\mathrm{temp}}$ is at least $\lfloor\gamma n\rfloor p_{\mathrm{temp}}$. Set $\beta:=\gamma p_{\mathrm{temp}}/4$ and $b:=\lfloor\beta n\rfloor$. The Chernoff bound $\Pr(Z<\nu p_{\mathrm{temp}}/2)\leq e^{-\nu p_{\mathrm{temp}}/8}$, together with $\nu p_{\mathrm{temp}}/2\geq b$, which holds once $\gamma n\ge2$, shows that at least $b$ tuples realize $H$ with probability at least $1-e^{-\lfloor\gamma n\rfloor p_{\mathrm{temp}}/8}$.

Let $\mathbf v^{(s_1)},\dots,\mathbf v^{(s_b)}$, with $s_1<\dots<s_b$, be the first $b$ tuples that realize $H$, and list their vertices tuple after tuple, the vertices of each tuple in the order given by the labels of $H$:
$$
v^{(s_1)}_1,\dots,v^{(s_1)}_Q,\ v^{(s_2)}_1,\dots,v^{(s_2)}_Q,\ \dots,\ v^{(s_b)}_1,\dots,v^{(s_b)}_Q.
$$
For $1\leq k\leq M=bQ$, let $J_k$ be the set of the first $k$ vertices of this list. Listing each tuple in the order of the labels of $H$ is what makes every prefix of a tuple correspond to a set $[r]$ on which $H$ has a cycle cover. Any prefix of this list consists of several complete copies of $H$ followed by a prefix of the next copy; the union of the covers of those copies with the cover of that prefix is a cycle cover of that prefix, and all the edges involved are present in $D^{(0)}$ because each tuple realizes every edge of $H$. The first vertex, $v^{(s_1)}_1$, has a loop in $D^{(0)}$, because $H[1]$ is a looped vertex, and the $b$ complete copies together have type counts $bQy^\star$, so $x(J_M)=y^\star$. Altogether, the construction succeeds with probability at least $1-2qe^{-c_1n}-e^{-\lfloor\gamma n\rfloor p_{\mathrm{temp}}/8}\ge1-e^{-c_2n}$ for a suitable $c_2>0$ and all sufficiently large $n$.
\end{proof}

\begin{theorem}[Sufficiency for $\mathcal S$] \label{thm:sufficiency_Dslack}
Let $W$ be a directed step-graphon with strongly connected skeleton $S$ and concentration vector $x^*$. If Conditions $B^*$ and D hold, then there exist constants $C_W,c_W>0$, depending only on $W$, such that $\Pr(\mathcal S)\ge1-C_We^{-c_Wn}$ for all $n$; in particular, $\Pr(\mathcal S)\to 1$.
\end{theorem}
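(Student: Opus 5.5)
The plan is to build the nested sequence in two stages on the two independent layers $D^{(0)}$ and $D^{(1)}$. Their union has the law of $D_n$, and a cycle cover in either layer is a cycle cover in $D_n$. Stage one uses $D^{(0)}$ for the first segment. Stage two appends the remaining vertices in a balanced order and lets $D^{(1)}$ supply the covers along the extension, through Lemma~\ref{lem:uniform_chain_realization} conditioned on $\mathcal F:=\mathcal H_0$. Condition D implies C and A (Remark~\ref{rem:DimpliesCA}), so Lemma~\ref{lem:uniform_chain_realization} is available. Conditionally on $\mathcal H_0$, the edges of $D^{(1)}$ are independent with probability $p_{ij}/(2-p_{ij})\ge p_{\min}/2=:p_0$.

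\emph{Stage one.} Apply Lemma~\ref{lem:log_seed_from_finite_seed_path} to obtain, with probability $1-e^{-c_2n}$, an $\mathcal H_0$-measurable chain $J_1\subset\cdots\subset J_{Qb}$ of copies of $H$. Its first vertex is looped and every $D^{(0)}[J_k]$ has a cycle cover. I will not use the whole chain. Instead I fix a smaller $\beta'\le\beta$, to be chosen below, and truncate after $b':=\lfloor\beta' n\rfloor$ complete copies. Any prefix made of full copies still has $x(J_{M})=y^\star$, now with $M:=Qb'$, and all shorter prefixes keep their covers.

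\emph{Stage two: the geometric step, which I expect to be the main obstacle.} The prefixes beyond $M$ must have type vectors in one compact $K\Subset\rint\vec{\mathcal X}(S)$ that does not depend on $n$. Work on the event $\|x-x^*\|_\infty\le\varepsilon$, which has probability $1-2qe^{-c_1n}$ by Lemma~\ref{lem:multinomial_conc}. Set $d:=N-My^\star$. Since $My^\star_i=b'd_i\le\beta' n\,d_i\le N_i$ for $\beta'$ small, $d\ge0$. Moreover
$$d/|d|=\frac{x-\theta y^\star}{1-\theta},\qquad \theta:=M/n\le Q\beta'.$$
Choose $\varepsilon$ and $\beta'$ small enough, using $\mathrm B^*$ and C, that every such $d/|d|$ lies in a closed ball $B\Subset\rint\vec{\mathcal X}(S)$ around $x^*$. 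Order the vertices outside $J_M$ by Lemma~\ref{lem:balanced_prefix_rounding_aux} applied to $d$; this ordering depends only on $\pi$ and the stage-one data, so it is $\mathcal H_0$-measurable. Then
$$x(J_{M+s})=\frac{M}{M+s}y^\star+\frac{s}{M+s}\frac{d}{|d|}+O\!\left(\frac{C_{\mathrm{bal}}(q)}{M}\right).$$
The leading term lies in $\conv(\{y^\star\}\cup B)$, which is a compact subset of $\rint\vec{\mathcal X}(S)$. The error term tends to $0$ because $M$ is linear in $n$. So for large $n$ all these vectors lie in the compact $K:=\{y\in\Delta_q:\dist_\infty(y,\conv(\{y^\star\}\cup B))\le\eta\}$ for a fixed small $\eta$, and $K\Subset\rint\vec{\mathcal X}(S)$.

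\emph{Conclusion.} Lemma~\ref{lem:uniform_chain_realization} now gives covers of $D^{(1)}[J_k]$ for all $M\le k\le n$, failing with probability at most $Ce^{-cM}=Ce^{-c'n}$. A union bound over the three failure events gives $\Pr(\mathcal S)\ge1-C_We^{-c_Wn}$ for large $n$, and enlarging $C_W$ covers small $n$.
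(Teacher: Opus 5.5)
Your proposal is correct and follows the paper's proof: the same two-layer split, Lemma~\ref{lem:log_seed_from_finite_seed_path} for the initial segment, a balanced ordering of the remaining vertices, and Lemma~\ref{lem:uniform_chain_realization} conditioned on $\mathcal H_0$ with $p_0=p_{\min}/2$. The only difference is the geometric step, where your truncation to a small $\beta'$ is valid but unnecessary: the paper observes that $\frac{M}{M+s}y^\star+\frac{s}{M+s}\,d/|d|=\lambda_s y^\star+(1-\lambda_s)x$ with $\lambda_s\in[0,1]$, so this term always lies on the segment $[y^\star,x]$ whatever the ratio $M/n$, and it takes $K$ to be a small closed neighborhood of $[y^\star,x^*]$ inside $\rint\vec{\mathcal X}(S)$.
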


\begin{proof}
Let $y^\star$, $Q$ and $(H,\tau)$ be given by Lemma~\ref{lem:finite_prefix_template}. By Lemma~\ref{lem:log_seed_from_finite_seed_path}, with probability at least $1-e^{-c_2n}$ there is an $\mathcal H_0$-measurable nested sequence $J_1\subset\cdots\subset J_M$ with $M=Q\lfloor\beta n\rfloor$, in which $J_1$ is a single looped vertex, every $D^{(0)}[J_k]$ has a cycle cover, and $x(J_M)=y^\star$.

Both $y^\star$ and $x^*$ lie in $\rint\vec{\mathcal X}(S)$, the first by Lemma~\ref{lem:finite_prefix_template} and the second by $\mathrm{B}^*$, and the relative interior is convex; so the segment joining them is a compact subset of it, and we may choose $\rho>0$ for which
$$
K:=\{z\in H_U:\dist_\infty(z,[y^\star,x^*])\le\rho\}
$$ 
is still contained in $\rint\vec{\mathcal X}(S)$. This  compact set will contain every $x(J_k)$, $M\leq k\leq n$.

Let $d$ be the vector of type counts of $V\setminus J_M$, set $R:=n-M$, and let $x^{\mathrm{rem}}:=d/R$ be the corresponding type frequency vector. Since $J_M$ has type counts $My^\star$,
$$
x^{\mathrm{rem}}=\frac{nx-My^\star}{n-M},
$$ 
so that $x^{\mathrm{rem}}$ lies on the line through $y^\star$ and $x$.

Apply Lemma~\ref{lem:balanced_prefix_rounding_aux} to $d$, append the remaining vertices one at a time, the $s$th appended vertex being of type $u_i$ when $u^{(s)}-u^{(s-1)}=e_i$, and extend the sequence to $J_n=V$. Writing $u^{(s)}$ for the counts of the first $s$ appended vertices, we have
$$
x(J_{M+s})=\frac{M}{M+s}y^\star+\frac{s}{M+s}x^{\mathrm{rem}}+\varepsilon^{(s)} \mbox{ with } \|\varepsilon^{(s)}\|_\infty\le\frac{C_{\mathrm{bal}}(q)}{M+s}.
$$
Substituting the expression for $x^{\mathrm{rem}}$, the first two terms equal $\lambda_sy^\star+(1-\lambda_s)x$ with
$$
\lambda_s:=\frac{M(n-M-s)}{(n-M)(M+s)}\in[0,1],
$$
a point of the segment $[y^\star,x]$, which is within $\|x-x^*\|_\infty$ of the segment $[y^\star,x^*]$; the third term is at most $C_{\mathrm{bal}}(q)/M$ in norm. By Lemma~\ref{lem:multinomial_conc}, $\|x-x^*\|_\infty\le\rho/2$ with probability at least $1-2qe^{-c_3n}$ for some $c_3>0$, and $C_{\mathrm{bal}}(q)/M\le\rho/2$ for $n$ large, since $M$ grows linearly with $n$. On this event every $x(J_{M+s})$, $0\leq s\leq n-M$, lies in $K$.

Now, condition on $\mathcal H_0$; then the extended sequence is fixed, and the admissible edges of the second layer are independent with probabilities at least $p_{\min}/2$. As explained in Remark~\ref{rem:DimpliesCA},  Condition D implies Condition C, so Lemma~\ref{lem:uniform_chain_realization} applies and yields
$$
\Pr\bigl(\exists k\geq M:D^{(1)}[J_k]\text{ has no cycle cover}\mid\mathcal H_0\bigr)\leq Ce^{-cM}.
$$
Since both layers are subgraphs of $D_n$, the first layer covers the prefixes with $k\leq M$ and the second those with $k\geq M$, and the two families form one nested sequence from a single looped vertex up to $V$. Hence $1-\Pr(\mathcal S)\leq e^{-c_2n}+2qe^{-c_3n}+Ce^{-cM}$ for $n$ large, which is exponentially small in $n$ since $M\ge\beta Qn-Q$; potentially enlarging $C_W$ to account for small $n$ proves the claim.
\end{proof}

Theorem~\ref{thm:maincondS} follows from Theorem~\ref{thm:necessary}, Proposition~\ref{prop:Dadm_necessary}, and Theorem~\ref{thm:sufficiency_Dslack}.

\section{Conclusion and open problems}\label{sec:conclusion}

We have studied the structural stability of linear systems whose interconnection graph is sampled from a directed step-graphon $W$ with strongly connected skeleton $S$ and concentration vector $x^*$. The central object is the cycle polytope $\vec{\mathcal X}(S)$, the directed counterpart of the edge polytope governing the symmetric case. Theorems~\ref{thm:maincondN} and~\ref{thm:maincondS} give matching necessary and sufficient conditions, up to the boundary gap between B and $\mathrm{B}^*$, for the properties $\mathcal N$ and $\mathcal S$. The proofs rest on two facts. On the deterministic side, a vector of type counts is feasible exactly when the corresponding blow-up of the skeleton splits into disjoint cycles, which is what turns Condition D into a finite list of counts reachable one vertex at a time. On the probabilistic side, any nested sequence of vertex sets whose type frequency vectors stay uniformly inside the polytope acquires all of its cycle covers at once, with probability exponentially close to $1$.

Several questions remain open. The first is the boundary case where Condition B holds but $\mathrm{B}^*$ fails, i.e., $x^*$ lies on the boundary of the cycle polytope. Proposition~\ref{prop:rint_necessary_prob1} shows only that $\limsup_n\Pr(\mathcal N)\leq 1/2$ in this regime, leaving open the precise asymptotic; we expect the answer to depend on which face of $\vec{\mathcal X}(S)$ contains $x^*$, in analogy with the symmetric results of~\cite{gao2025h}. A second direction concerns the gap between $\mathcal N$, $\mathcal S$, and stability itself: even deterministically, no graph-theoretic characterization of stable digraphs is known, and our results show that $\mathcal N$ and $\mathcal S$ separate on the directed step-graphon model (Example~\ref{ex:NS_separation}), mirroring the deterministic gap between them. Finally, the cycle polytope is purely combinatorial and depends only on the skeleton $S$, not on the specific block probabilities $p_{ij}$ as long as they are positive on $F$; determining the optimal exponential rates in Theorems~\ref{thm:maincondN} and~\ref{thm:maincondS}, and tracking their dependence on $p_{\min}$, would be a natural refinement, as would extending the framework beyond step-graphons to general (non-block-structured) graphons, where the type space becomes continuous and the cycle polytope is replaced by an infinite-dimensional analog.

\printbibliography

\end{document}